\newtheorem{proposition}{Proposition}%[section]
\newtheorem{lemma}{Lemma}%[section]
\newtheorem{theorem}{Theorem}%[section]
\newtheorem{corollary}{Corollary}%[section]
\newtheorem{definition}{Definition}%[section]
\newtheorem{remark}{Remark}%[section]
\newcommand\zkn[1][k]{Z_{#1}^{(n)}}
\DeclareMathOperator{\SU}{SU} \DeclareMathOperator\sll{sl}
\newcommand{\im}{\mathop{\fam0 Im}\nolimits}
\newcommand{\tr}{\mathop{\fam0 Tr}\nolimits}
\newcommand{\re}{\mathop{\fam0 Re}\nolimits}
\newcommand{\Hom}{\mathop{\fam0 Hom}\nolimits}
\newcommand{\End}{\mathop{\fam0 End}\nolimits}
\newcommand{\id}{\mathop{\fam0 Id}\nolimits}
\newcommand{\Aut}{\mathop{\fam0 Aut}\nolimits}
\newcommand{\Id}{\mathop{\fam0 Id}\nolimits}
\newcommand{\bC}{{\mathbb C}}
\newcommand{\bR}{{\mathbb R}}
\newcommand{\bT}{{\bar T}}
\newcommand{\C}{C}
\newcommand{\Z}{{\mathbb Z}}
\newcommand{\bZ}{\Z{}}
\newcommand{\D}{{\mathcal D}}
\newcommand{\ra}{\mathop{\fam0 \rightarrow}\nolimits}
\newcommand{\cH}{ {\mathcal H}}
\newcommand{\T}{ {\mathcal T}}
\newcommand{\V}{ {\mathcal V}}
\renewcommand{\L}{{\mathcal L}}
\renewcommand{\P}{ {\mathbb P}}
\newcommand{\tD}{ {\tilde D}}
\newcommand{\s}{\sigma}
\newcommand{\Nabla}{{\mathbf {\hat \nabla}}}
\newcommand{\Nablat}{{\mathbf {\hat \nabla}}^{t}}
\newcommand{\Nablae}{{\mathbf {\hat \nabla}}^e}
\newcommand{\Nablaet}{{\mathbf {\hat \nabla}}^{e,t}}
\newcommand{\BTstar}{\star^{\text{\tiny BT}}}
\newcommand{\tBTstar}{{\tilde \star}^{\text{\tiny BT}}}
\newcommand{\Teim}{Teichm{\"u}ller }
\newcommand{\Ric}{\mathop{\fam0 Ric}\nolimits}
\newcommand{\cL}{{\mathcal L}}
\newcommand{\Hk}{H^{(k)}}
\newcommand{\pik}{\pi^{(k)}}
\newcommand{\bH}{\mathbb{H}}
\renewcommand{\C}{\mathbb{C}}
\providecommand{\abs}[1]{\left|#1\right|} 
\providecommand{\norm}[1]{\lVert#1\rVert}
\providecommand{\inner}[2]{\left \langle #1,#2 \right \rangle}
\providecommand{\formal}[1]{[[ #1 ]]}
\providecommand{\derx}[1]{\frac{\partial}{\partial x_{#1}}}
\providecommand{\dery}[1]{\frac{\partial}{\partial y_{#1}}}
\providecommand{\derz}[1]{\frac{\partial}{\partial z_{#1}}}
\providecommand{\derbz}[1]{\frac{\partial}{\partial \bar{z}_{#1}}}
\providecommand{\dert}[1]{\frac{\partial}{\partial Z_{#1}}}
\providecommand{\derbt}[1]{\frac{\partial}{\partial \bar{Z}_{#1}}}
\DeclareMathOperator{\U}{U}
\begin{document}

\title{Asymptotics of Toeplitz operators
 and applications in TQFT}

\author{J{\o}rgen Ellegaard Andersen}\address{Centre for Quantum Geometry
  of Moduli Spaces\\ Faculty of Science and Technology\\
  Aarhus University\\
  DK-8000, Denmark}

\email{andersen@imf.au.dk}

\author{Jakob Lindblad Blaavand} \address{Centre for Quantum Geometry
  of Moduli Spaces\\ Faculty of Science and Technology\\
  Aarhus University\\
  DK-8000, Denmark}

\email{blaavand@imf.au.dk}

\maketitle

\begin{abstract}
  In this paper we provide a review of asymptotic results of Toeplitz
  operators and their applications in TQFT. To do this we review the
  differential geometric construction of the Hitchin connection on a
  prequantizable compact symplectic manifold. We use asymptotic
  results relating the Hitchin
  connection and Toeplitz operators, to, in the special case of the moduli space of flat
  $\SU(n)$-connections on a surface, prove asymptotic faithfulness of
  the $\SU(n)$ quantum representations of the mapping class group. We
  then go on to review formal Hitchin connections and formal
  trivializations of these. We discuss how these fit together to
  produce a Berezin--Toeplitz star product, which is independent of
  the complex structure. Finally we give explicit examples of all the above objects in
  the case of the abelian moduli space. We furthermore discuss an
  approach to curve operators in the TQFT associated to abelian
  Chern--Simons theory.
\end{abstract}

\section{Introduction}

Witten constructed, via path integral techniques, a quantization of
Chern-Simons theory in $2+1$ dimensions, and he argued in \cite{W1}
that this produced a TQFT, indexed by a compact simple Lie group and
an integer level $k$. For the group $\SU(n)$ and level $k$, let us
denote this TQFT by $\zkn$. Combinatorially, this theory was first constructed by Reshetikhin and
Turaev, using representation theory of $U_q(\sll(n,\bC))$ at
$q=e^{(2\pi i)/(k+n)}$, in \cite{RT1} and \cite{RT2}. Subsequently,
the TQFT's $\zkn$ were constructed using skein theory by Blanchet,
Habegger, Masbaum and Vogel in \cite{BHMV1}, \cite{BHMV2} and
\cite{B1}.

The two-dimensional part of the TQFT $\zkn$ is a modular functor with
a certain label set. For this TQFT, the label set $\Lambda_k^{(n)}$ is
a finite subset (depending on $k$) of the set of finite dimensional
irreducible representations of $\SU(n)$. We use the usual labeling of
irreducible representations by Young diagrams, so in particular
$\Box\in \Lambda_k^{(n)}$ is the defining representation of
$\SU(n)$. Let further $\lambda_0^{(d)} \in \Lambda_k^{(n)}$ be the
Young diagram consisting of $d$ columns of length $k$. The label set
is also equipped with an involution, which is simply induced by taking
the dual representation. The trivial representation is a special
element in the label set which is clearly preserved by the involution.

\begin{align*}
  \zkn: \quad \left\{\parbox{3cm}{\RaggedRight Category of (extended)
      closed oriented surfaces with $\Lambda^{(n)}_k$-labeled marked
      points with projective tangent vectors} \right\} \ \to \
  \left\{\parbox{3cm}{\RaggedRight Category of finite dimensional
      vector spaces over $\bC$} \right\}
\end{align*}

The three-dimensional part of $\zkn$ is an association of a vector,
$$\zkn(M,L,\lambda)\in \zkn(\partial M, \partial L, \partial \lambda),$$
to any compact, oriented, framed $3$--manifold $M$ together with an
oriented, framed link $(L,\partial L)\subseteq (M,\partial M)$ and a
$\Lambda_k^{(n)}$-labeling $\lambda : \pi_0(L) \ra \Lambda_k^{(n)}$.

% \begin{figure}[H]
%   \centering
%   \includegraphics[scale=0.8]{slide-figs.2}
% \end{figure}

This association has to satisfy the Atiyah-Segal-Witten TQFT axioms
(see e.g. \cite{At}, \cite{Segal} and \cite{W1}). For a more
comprehensive presentation of the axioms, see Turaev's book \cite{T}.

The geometric construction of these TQFTs was proposed by Witten in
\cite{W1} where he derived, via the Hamiltonian approach to quantum
Chern-Simons theory, that the geometric quantization of the moduli
spaces of flat connections should give the two-dimensional part of the
theory. Further, he proposed an alternative construction of the
two-dimensional part of the theory via WZW-conformal field
theory. This theory has been studied intensively. In particular, the
work of Tsuchiya, Ueno and Yamada in \cite{TUY} provided the major
geometric constructions and results needed. In \cite{BK}, their
results were used to show that the category of integrable highest
weight modules of level $k$ for the affine Lie algebra associated to
any simple Lie algebra is a modular tensor category. Further, in
\cite{BK}, this result is combined with the work of Kazhdan and
Lusztig \cite{KL} and the work of Finkelberg \cite{Fi} to argue that
this category is isomorphic to the modular tensor category associated
to the corresponding quantum group, from which Reshetikhin and Turaev
constructed their TQFT. Unfortunately, these results do not allow one
to conclude the validity of the geometric constructions of the
two-dimensional part of the TQFT proposed by Witten.  However, in
joint work with Ueno, \cite{AU1}, \cite{AU2}, \cite{AU3} and
\cite{AU4}, the first author have given a proof, based mainly on the results of
\cite{TUY}, that the TUY-construction of the WZW-conformal field
theory, after twist by a fractional power of an abelian theory,
satisfies all the axioms of a modular functor.  Furthermore, we have
proved that the full $2+1$-dimensional TQFT resulting from this is
isomorphic to the aforementioned one, constructed by BHMV via skein
theory. Combining this with the theorem of Laszlo \cite{La1}, which
identifies (projectively) the representations of the mapping class
groups obtained from the geometric quantization of the moduli space of
flat connections with the ones obtained from the TUY-constructions,
one gets a proof of the validity of the construction proposed by
Witten in \cite{W1}.

Another part of this TQFT is the quantum $\SU(n)$ representations of the
mapping class groups. Namely, if $\Sigma$ is a closed oriented
surfaces of genus $g$, $\Gamma$ is the mapping class group of
$\Sigma$, and $p$ is a point on $\Sigma$, then the modular functor
induces a representation
\begin{equation}\label{rep}
  Z^{(n,d)}_{k} : \Gamma \to \P\Aut\bigl(\zkn (\Sigma, p, \lambda_0^{(d)})\bigr).
\end{equation}
For a general label of $p$, we would need to choose a projective
tangent vector $v_p\in T_p\Sigma/\bR_+$, and we would get a
representation of the mapping class group of $(\Sigma,p, v_p)$.  But
for the special labels $\lambda_0^{(d)}$, the dependence on $v_p$ is
trivial and in fact we get a representation of $\Gamma$.

%Here was the part about curve operators

Let us now briefly recall the geometric construction of the
representations $Z^{(n,d)}_k$ of the mapping class group, as proposed
by Witten, using geometric quantization of moduli spaces.

We assume from now on that the genus of the closed oriented surface
$\Sigma$ is at least two. Let $M$ be the moduli space of flat $\SU(n)$
connections on $\Sigma - p$ with holonomy around $p$ equal to
$\exp(2\pi i d/n)\Id \in \SU(n)$. When $(n,d)$ are coprime, the moduli
space is smooth. In all cases, the smooth part of the moduli space has
a natural symplectic structure $\omega$.  There is a natural smooth
symplectic action of the mapping class group $\Gamma$ of $\Sigma$ on
$M$. Moreover, there is a unique prequantum line bundle $(\L,\nabla,
(\cdot,\cdot))$ over $(M,\omega)$. The Teichm\"{u}ller space $\T$ of
complex structures on $\Sigma$ naturally, and $\Gamma$-equivariantly,
parametrizes K\"{a}hler structures on $(M,\omega)$. For $\sigma\in \T
$, we denote by $M_\sigma$ the manifold $(M,\omega)$ with its
corresponding K\"{a}hler structure.  The complex structure on
$M_\sigma$ and the connection $\nabla$ in $\L$ induce the structure of
a holomorphic line bundle on $\L$. This holomorphic line bundle is
simply the determinant line bundle over the moduli space, and it is an
ample generator of the Picard group \cite{DN}.

By applying geometric quantization to the moduli space $M$, one gets,
for any positive integer $k$, a certain finite rank bundle over \Teim
space $\T$ which we will call the {\em Verlinde bundle} $\V^{(k)}$ at
level $k$. The fiber of this bundle over a point $\sigma\in \T$ is
$\V_{\sigma}^{(k)} = H^0(M_\sigma,\L^k)$. We observe that there is a
natural Hermitian structure $\langle\cdot,\cdot\rangle$ on
$H^0(M_\sigma,\L^k)$ by restricting the $L_2$-inner product on global
$L_2$ sections of $\L^k$ to $H^0(M_\sigma,\L^k)$.

The main result pertaining to this bundle is:
\begin{theorem}[Axelrod, Della Pietra and Witten;
  Hitchin]\label{projflat}
  The projectivization of the bundle $\V^{(k)}$ supports a natural flat
  $\Gamma$-invariant connection $\Nabla$.
\end{theorem}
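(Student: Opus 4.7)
The plan is to exhibit $\Nabla$ explicitly as a modification of the trivial connection on the infinite-rank bundle $\tilde{\V}^{(k)} = \T \times C^\infty(M,\L^k)$, of which $\V^{(k)}$ is a subbundle via the fiberwise inclusion $H^0(M_\sigma,\L^k) \hookrightarrow C^\infty(M,\L^k)$. First I would write the candidate connection as
$$\Nabla_V = \nabla^{Tr}_V + u(V),$$
where $\nabla^{Tr}$ is the trivial connection and $u : T\T \to \cD(M,\L^k)$ is a 1-form valued in second order differential operators on sections of $\L^k$. The defining requirement is that $\Nabla$ preserve $\V^{(k)}\subset \tilde\V^{(k)}$: differentiating the holomorphicity condition $\bar\partial_\sigma s = 0$ in a direction $V\in T_\sigma\T$ produces the equation
$$\bar\partial_\sigma\bigl(u(V)\,s\bigr) \;=\; -\,(\dot{\bar\partial}_V)\,s \qquad \text{for all } s\in H^0(M_\sigma,\L^k).$$
The variation of the Kähler structure along $V$ is encoded by a $T^{1,0}M_\sigma$-valued $(0,1)$-form $G(V)$, and $\dot{\bar\partial}_V$ is the contraction of $G(V)$ with the $(1,0)$-covariant derivative on $\L^k$.

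Next I would solve the equation by an explicit ansatz. Using Kähler coordinates, one builds the natural second order operator whose symbol is $G(V)$, call it $\Delta_{G(V)}$, together with first order corrections involving the Ricci form and the Ricci potential $F$ of the family. A direct computation using the Bochner--Kodaira identity and the fact that, after twisting by sufficiently high $k$, the relevant Dolbeault groups vanish, shows that the choice
$$u(V) \;=\; \tfrac{1}{2k+n}\bigl(\Delta_{G(V)} \;+\; 2\nabla_{G(V)\cdot dF} \;+\; \text{scalar term}\bigr)$$
does solve the preservation equation, so that $\Nabla$ descends to a genuine connection on $\V^{(k)}$. The ampleness of $\L$ (the determinant line bundle) is what makes the scalar normalization $1/(2k+n)$ available and the correction term unique up to scalars.

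The principal obstacle is the projective flatness. I would expand
$$F_{\Nabla}(V,W) \;=\; [\nabla^{Tr}_V + u(V),\,\nabla^{Tr}_W + u(W)] \;-\; \nabla^{Tr}_{[V,W]} \;-\; u([V,W])$$
and show that it acts on $H^0(M_\sigma,\L^k)$ as multiplication by a scalar 2-form. The computation separates into a commutator term quadratic in $u$ and a differential-of-$u$ term; both contain pieces of order up to four in covariant derivatives, and the delicate point is that every piece which is not a scalar operator must cancel. This cancellation relies on two special features of the moduli space: that $c_1(M)$ is a rational multiple of $[\omega/2\pi]$, and the rigidity $H^1(M,\cO)=0$ together with $\pi_1(M)=0$. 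These are the ingredients that turn the a priori non-trivial curvature into a scalar, from which projective flatness follows.

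Finally, $\Gamma$-invariance is automatic from naturality. The entire input data---the symplectic form $\omega$, the prequantum line bundle $(\L,\nabla,(\cdot,\cdot))$, the family $\sigma\mapsto M_\sigma$, and the symbol $G(V)$---is produced canonically from $(\Sigma,M)$, and $\Gamma$ acts by symplectomorphisms of $M$ covering its action on $\T$ and lifting to $\L$. Since $u(V)$ is built canonically from these data, the connection $\Nabla$ is $\Gamma$-equivariant, and therefore descends to the desired flat $\Gamma$-invariant connection on $\P\V^{(k)}$.
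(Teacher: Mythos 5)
Your construction of the connection itself follows the same route as the paper's Section~\ref{ghc}: write $\hat{\nabla}=\nabla^t+u$ with $u$ a one-form on $\T$ valued in differential operators, differentiate the condition $\nabla^{0,1}_\sigma s=0$ along $V$ to obtain the preservation equation $\nabla^{0,1}_\sigma u(V)s=\frac{i}{2}V[I]_\sigma\nabla^{1,0}_\sigma s$ of Lemma~\ref{lem_hitchin:1} (your $\dot{\bar\partial}_V$), and solve it with the ansatz $\Delta_{G(V)}+2\nabla_{G(V)\cdot dF}+4kV'[F]$; the correct normalization is $1/(4k+2n)$ as in Theorem~\ref{HCE} rather than your $1/(2k+n)$, though since you left the scalar term unspecified this is within the tolerance of a sketch. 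The $\Gamma$-invariance-by-naturality argument is likewise Lemma~\ref{lem:3}. Two of your stated hypotheses are misidentified, however. First, the normalization comes from the integrality $c_1(M,\omega)=n[\frac{\omega}{2\pi}]$ with $n$ an \emph{integer} (for the moduli space this is the statement that $\L$ is an ample generator of the Picard group), not a rational multiple. Second, ``rigidity'' is not the vanishing of $H^1(M,\cO)$ or of $\pi_1(M)$: it is the condition $\nabla_{X''}G(V)=0$ of Definition~\ref{rigid} on the family of K\"{a}hler structures, and it is needed already to verify that the ansatz solves the preservation equation, not only later. The cohomological hypothesis actually used is $H^1(M,\bR)=0$.

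The genuine gap is projective flatness, which is the substance of the theorem. You correctly identify that one must expand $F_{\hat{\nabla}}(V,W)$ and show that every non-central piece cancels, but you neither carry out this cancellation nor exhibit the mechanism by which it happens; asserting that it ``relies on'' a list of features (some of them misstated, as above) is not a proof. This computation does not follow formally from the preservation equation --- it is the hard core of \cite{ADW} and \cite{H}, and the present paper does not reprove it either: it constructs the connection and, for flatness of the projectivized connection, refers to those papers and to \cite{Gammelgaard}, where it is obtained from Toeplitz-operator asymptotics under further assumptions on the family (cf.\ Remark~\ref{rem:3}). As written, your argument establishes (modulo the corrections above) the existence of a natural $\Gamma$-invariant connection preserving the subbundle $\V^{(k)}$, but not that its curvature is a scalar two-form, which is what the theorem asserts.
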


This is a result proved independently by Axelrod, Della Pietra and
Witten \cite{ADW} and by Hitchin \cite{H}. In section \ref{ghc}, we
review our differential geometric construction of the connection
$\Nabla$ in the general setting discussed in \cite{A5}. We obtain as a
corollary that the connection constructed by Axelrod, Della Pietra and
Witten projectively agrees with Hitchin's.

Because of the existence of this connection, the $2$-dimensional part
of the modular functor $Z^{(n)}_k$ is the vector space $\P(V^{(k)})$
of covariant constant sections of $\P(\V^{(k)})$ over Teichm\"{u}ller
space $\T$.

\begin{definition}\label{MD1}
  We denote by $Z^{(n,d)}_{k}$ the representation,
  \begin{align*}
    Z^{(n,d)}_{k} : \Gamma \to \Aut\bigl(\P (V^{(k)})\bigr),
  \end{align*}
  obtained from the action of the mapping class group on the covariant
  constant sections of $\P(\V^{(k)})$ over $\T$.
\end{definition}

The projectively flat connection $\Nabla$ induces a {\em flat}
connection $\Nablae$ in $\End(\V^{(k)})$. This flat connection can be
used to show asymptotically flatness of the quantum representations
$Z^{(n,d)}_k$,

\begin{theorem}[Andersen \cite{A3}]\label{MainA3} Assume that $g \geq 2$, $n$ and $d$ are coprime or
  that $(n,d)=(2,0)$ when $g=2$.  Then, we have that
  \begin{equation*}
    \bigcap_{k=1}^\infty \ker(Z^{(n,d)}_k) =
    \begin{cases}
      \{1, H\} & g=2 \mbox{, }n=2 \mbox{ and } d=0 \\ \{1\}&
      \mbox{otherwise},
    \end{cases}
  \end{equation*}
  where $H$ is the hyperelliptic involution.
\end{theorem}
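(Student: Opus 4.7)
The plan is to prove the theorem by contradiction: suppose $\phi\in\Gamma$ lies in $\bigcap_{k} \ker(Z^{(n,d)}_k)$, and show that $\phi$ must then act trivially on the symplectic moduli space $M$; the conclusion will then follow from classical results identifying the kernel of the $\Gamma$-action on $M$ as trivial except in the exceptional case $(g,n,d)=(2,2,0)$, where it is generated by the hyperelliptic involution $H$.

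The bridge between the representation-theoretic hypothesis and the geometric conclusion is provided by Toeplitz operators $T_f^{(k)}\in \End(\V_\sigma^{(k)})$ attached to smooth functions $f\in C^\infty(M)$. First I would use the flat connection $\Nablae$ on $\End(\V^{(k)})$ (induced by $\Nabla$) to make sense of the statement that Toeplitz operators are \emph{asymptotically} covariant constant: namely, there is an asymptotic expansion showing that $\Nablae T_f^{(k)}$ is a Toeplitz operator whose principal symbol vanishes, so that modulo operator norm terms of order $O(1/k)$, $T_f^{(k)}$ behaves as a flat section of $\End(\V^{(k)})$. The natural geometric action of $\phi$ on $M$ lifts to an action on $\L^k$ and hence intertwines $T_f^{(k)}$ with $T_{\phi^*f}^{(k)}$ fiberwise in $\sigma$.

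Next, suppose $\phi$ lies in $\ker Z^{(n,d)}_k$ for every sufficiently large $k$. Combining $\Gamma$-equivariance with the asymptotic flatness of Toeplitz operators and the fact that the induced action on $\P(\V^{(k)})$ is trivial, the operator $T_{\phi^*f}^{(k)} - T_f^{(k)}$ must be asymptotically scalar and hence, using the standard trace asymptotics, must have operator norm tending to zero. The decisive input is then the fundamental norm estimate
\[
\lim_{k\to\infty}\|T_f^{(k)}\| = \|f\|_\infty,
\]
valid on any compact prequantizable K\"ahler manifold. Applying this to $\phi^*f - f$ forces $\phi^*f = f$ pointwise for every $f\in C^\infty(M)$, and therefore $\phi$ acts as the identity on the smooth part of $M$.

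Finally I would invoke the known description of the kernel of the action $\Gamma\to \Diff(M)$: a result going back to the work of Mumford and Newstead (and used in this form in the moduli-theoretic arguments on the mapping class group action) shows that this kernel is trivial precisely except when $g=2$, $n=2$, $d=0$, in which case it is generated by the hyperelliptic involution $H$. Putting the steps together yields the claimed description of $\bigcap_k \ker(Z^{(n,d)}_k)$. The principal technical obstacle is the first step: establishing, with sufficient uniformity in $\sigma\in\T$, that Toeplitz operators form an asymptotically flat family for $\Nablae$, since one must control the $k$-dependent error terms well enough that the norm estimate above can be transported across Teichm\"uller space. This is exactly the content of the asymptotic Hitchin--Toeplitz comparison that the review portion of the paper is built around, and which must be applied in a $\Gamma$-equivariant fashion.
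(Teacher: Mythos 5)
Your proposal follows essentially the same route as the paper's proof: equivariance of Toeplitz operators under $\phi$, asymptotic flatness with respect to $\Nablae$ (so that projective triviality of $Z^{(n,d)}_k(\phi)$ forces $\lim_{k\to\infty}\|T^{(k)}_{f-f\circ\phi,\sigma}\|=0$), the Bordemann--Meinrenken--Schlichenmaier norm estimate to conclude $f=f\circ\phi$ for all $f$, and finally the classical identification of the kernel of the $\Gamma$-action on $M$. The only divergence is cosmetic: since $Z^{(n,d)}_k(\phi)$ is an honest scalar, the paper obtains the exact identity $T^{(k)}_{f,\sigma}=P_{\phi(\sigma),\sigma}T^{(k)}_{f\circ\phi,\phi(\sigma)}$ and then applies \eqref{Asympflat} directly, so your intermediate step via ``asymptotically scalar plus trace asymptotics'' is an unnecessary (though harmless) detour.
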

In Section~\ref{sec:asympt-faithf} we discuss the proof of this
Theorem, and how it relies on the asymptotics of \emph{Toeplitz operators}
$T^{(k)}_f$
associated a smooth function $f$ on $M$. For each $f \in C^\infty(M)$
and each point $\s \in \T$ we have the Toeplitz operator, 
\[
T^{(k)}_{f,\s} : H^0(M_\s,\cL^k_\s) \to H^0(M_\s,\cL^k_\s),
\]
which is given by
\[
T^{(k)}_{f,\s} s= \pik_\s (fs)
\] for all $s \in H^0(M_\s,\cL^k_\s)$. Here $\pik_\s$ is the
orthogonal projection onto $H^0(M_\s,\cL^k_\s)$ induced from the
$L_2$-inner product on $C^\infty(M,\cL^k)$. We get a smooth section
of $\End(\V^{(k)})$,
\[
T^{(k)}_f \in C^\infty(\T,\End(\V^{(k)})),
\] by letting $T^{(k)}_f (\s) = T^{(k)}_{f,\s}$. See Section~\ref{BZdq}
for a discussion of the Toeplitz operators and their connection to
deformation quantization. The sections $T^{(k)}_f$ of $\End{\V^{(k)}}$
over $\T$ are not covariant constant with respect to
$\Nablae$. However, they are asymptotically as $k$ goes to
infinity. This is made precise when we discuss the formal Hitchin
Connection below.

The existence of a connection as above is not a unique thing for the
moduli spaces, the construction can be generalized to a general
compact prequantizable symplectic manifold $(M, \omega)$ with prequantum
line bundle $(\cL,(\cdot,\cdot), \nabla)$. We assume that $\T$ is a
complex manifold which holomorphically and rigidly (see Definition
\ref{rigid}) parameterizes K\"{a}hler structures on
$(M,\omega)$. Then, the following theorem, proved in \cite{A5},
establishes the existence of the Hitchin connection (see
Definition~\ref{def:hitchin}) under a mild
cohomological condition.

\begin{theorem}[Andersen]\label{MainGHCI}
  Suppose that $I$ is a rigid family of K\"{a}hler structures on the
  compact, prequantizable symplectic manifold $(M,\omega)$ which
  satisfies that there exists an $n\in \bZ$ such that the first Chern
  class of $(M,\omega)$ is $n [\frac{\omega}{2\pi}]\in H^2(M,\bZ)$ and
  \mbox{$H^1(M,\bR) = 0$}. Then, the Hitchin connection $\Nabla$ in
  the trivial bundle $\cH^{(k)} = \mathcal{T} \times C^\infty(M,
  \mathcal{L}^k)$ preserves the subbundle $H^{(k)}$ with fibers
  $H^0(M_\sigma, \mathcal{L}^k)$. It is given by
  \[\Nabla_V = \Nablat_V + \frac1{4k+2n} \left\{\Delta_{G(V)} +
    2\nabla_{G(V)\cdot dF} + 4k V'[F]\right\},\] where $\Nablat$ is the
  trivial connection in $\cH^{(k)}$, and $V$ is any smooth vector
  field on $\T$. 
\end{theorem}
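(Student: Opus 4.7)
The plan is to construct the Hitchin connection as a first--order perturbation of the trivial connection and then verify directly that it preserves the holomorphic subbundle $H^{(k)}\subset \cH^{(k)}$. Write
\[
\Nabla_V = \Nablat_V + u(V),
\]
where $u$ is a $1$--form on $\T$ with values in order $\leq 2$ differential operators on $C^{\infty}(M,\cL^k)$. Since $\Nablat$ preserves the full trivial bundle and the fibers $H^0(M_\sigma,\cL^k)$ are cut out by $\dbar^{\sigma}$, the preservation condition reduces to the operator identity
\[
\bigl[\dbar^{\sigma},\,u(V)\bigr]\;=\;-\,V\!\left[\dbar^{\sigma}\right]\quad\text{on }H^0(M_\sigma,\cL^k),
\]
where $V[\dbar^{\sigma}]$ is the derivative of the family of $\dbar$--operators along $V$.

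Next, I would unwind the right hand side using the rigidity assumption on $I$. The variation $V[I]$ splits into type $(1,0)$ and $(0,1)$ pieces; rigidity ensures $V[I]^{0,1}$ is a holomorphic section of $S^2(T^{1,0}M_\sigma)$, and contracting with $\omega^{-1}$ produces the symmetric bivector $G(V)$ appearing in the formula. A direct calculation gives $V[\dbar^{\sigma}]=-\tfrac{1}{2}\,G(V)\!\cdot\!\nabla$ on holomorphic sections of $\cL^k$, so the task becomes finding an operator $u(V)$, built out of $G(V)$, $\nabla$ and the K\"ahler geometry, whose commutator with $\dbar$ yields exactly this expression.

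The natural ansatz is the second--order operator $\Delta_{G(V)}$, defined by composing the Hessian $\nabla^2$ with the symbol $G(V)$ and adding the divergence correction. Computing $[\dbar,\Delta_{G(V)}]$ requires commuting $\nabla$ past $\dbar$ twice, which introduces two curvature contributions: one from the prequantum line bundle, $F_{\nabla}=-ik\omega$, producing the factor $4k$, and one from the Levi--Civita connection on $T^{1,0}M_\sigma$, producing a factor $2$ times the Ricci form $\rho$. After collecting terms, the commutator matches $-(4k+2n)\,V[\dbar]$ plus an error proportional to $\rho-n\omega$ contracted with $G(V)$. The cohomological assumption $c_1(M,\omega)=n[\omega/2\pi]$ together with $H^1(M,\bR)=0$ is precisely what allows us to write $\rho-n\omega = 2i\partial\dbar F$ for a globally defined Ricci potential $F$, unique up to an additive constant. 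The first--order correction $2\nabla_{G(V)\cdot dF}$ produces a commutator with $\dbar$ of the form $2i\partial\dbar F$ paired with $G(V)$, which cancels the error term, while the zero--order correction $4kV'[F]$ compensates for the variation of $F$ itself in the $\sigma$ direction, so that the total operator commutes with $\dbar^{\sigma}$ through the family and the overall normalization $\tfrac{1}{4k+2n}$ is forced.

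The main obstacle I expect is the commutator computation $[\dbar^{\sigma},\Delta_{G(V)}]$: one has to keep track of bidegrees carefully, use the Bianchi and K\"ahler identities to reorganize the resulting second derivatives, and show that every curvature term either feeds into the $4k$--coefficient from $F_\nabla$, or into the Ricci piece that is killed by the $\partial\dbar F$ cocycle. Once this calculation is in hand, the remaining steps—verifying that $u(V)$ depends $C^\infty$--linearly on $V$, checking that the definition is independent of the choice of Ricci potential modulo constants (which is trivial on $H^0$ since constants act by scalars), and confirming that $u(V)$ indeed sends $H^0(M_\sigma,\cL^k)$ into itself after the cancellation—are essentially formal.
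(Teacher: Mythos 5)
Your proposal follows essentially the same route as the paper: the reduction of the preservation condition to the operator equation $\nabla^{0,1}_\sigma u(V)s = \frac{i}{2}V[I]_\sigma\nabla^{1,0}_\sigma s$ of Lemma~\ref{lem_hitchin:1}, the ansatz $\Delta_{G(V)}$ whose commutator with $\nabla^{0,1}_\sigma$ produces the prequantum curvature (the $4k$) and the Ricci form, and the use of $c_1(M,\omega)=n[\frac{\omega}{2\pi}]$ together with $H^1(M,\bR)=0$ to introduce the Ricci potential $F$ and the corrections $2\nabla_{G(V)\cdot dF}+4kV'[F]$, exactly as in \cite{A5}. Apart from minor bookkeeping (the preservation condition is $[\nabla^{0,1}_\sigma,u(V)]=+V[\nabla^{0,1}_\sigma]$ on holomorphic sections, and it is $G(V)$, not $V[I]'$ itself, that is the holomorphic section of $S^2(T_\sigma)$ guaranteed by rigidity), your outline matches the construction reviewed in Section~\ref{ghc}.
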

This result is discussed in much greater detail in Section~\ref{ghc},
where all ingredients are introduced.

In Section \ref{sec:form-hitch-conn}, we study the formal Hitchin
connection which was introduced in \cite{A5}. Let $\D(M)$ be the space
of smooth differential operators on $M$ acting on smooth functions on
$M$. Let $\C_h$ be the trivial $C^\infty_h(M)$-bundle over $\T$, where
$C^\infty_h(M)$ is formal power series with coefficients in $C^\infty(M)$.

\begin{definition}\label{fc2}
  A formal connection $D$ is a connection in $\C_h$ over $\T$ of the
  form
  \[D_V f = V[f] + \tD(V)(f),\] where $\tD$ is a smooth one-form on
  $\T$ with values in $\D_h(M) = \D(M)\formal{h}$, $f$ is any smooth
  section of $\C_h$, $V$ is any smooth vector field on $\T$ and $V[f]$
  is the derivative of $f$ in the direction of $V$.
\end{definition}

Thus, a formal connection is given by a formal series of differential
operators
\[\tD(V) = \sum_{l=0}^\infty \tD^{(l)}(V) h^l.\]

From Hitchin's connection in $H^{(k)}$, we get an induced connection
$\Nablae$ in the endomorphism bundle $\End(H^{(k)})$. As previously
mentioned, the Toeplitz operators are not covariant constant sections
with respect to $\Nablae$, but asymptotically in $k$ they are. This
follows from the properties of the formal Hitchin connection, which is
the formal connection $D$ defined through the following theorem
(proved in \cite{A5}).

\begin{theorem}(Andersen)\label{MainFGHCI2} There is a unique formal
  connection $D$ which satisfies that
  \begin{equation}
    \Nablae_V T^{(k)}_f \sim T^{(k)}_{(D_V f)(1/(k+n/2))}\label{Tdf2}
  \end{equation}
  for all smooth section $f$ of $\C_h$ and all smooth vector fields
  $V$ on $\T$. Moreover,
  \[\tD = 0 \mod h.\]
  Here $\sim$ means the following: For all $L\in \Z_+$ we have that
  \[\left\| \Nablae_V T^{(k)}_{f} - \left( T^{(k)}_{V[f]} +
      \sum_{l=1}^L T_{\tD^{(l)}_V f}^{(k)} \frac1{(k+n/2)^{l}}\right)
  \right\| = O(k^{-(L+1)}),\] uniformly over compact subsets of $\T$,
  for all smooth maps $f:\T \ra C^\infty(M)$.
\end{theorem}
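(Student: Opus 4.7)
The plan is to compute $\Nablae_V T^{(k)}_f$ explicitly using the formula for the Hitchin connection from Theorem~\ref{MainGHCI}, and then recognize the resulting operator as an asymptotic series of Toeplitz operators. Writing
\[
u_k(V) = \tfrac{1}{4k+2n}\bigl(\Delta_{G(V)} + 2\nabla_{G(V)\cdot dF} + 4kV'[F]\bigr),
\]
we have $\Nabla_V = \Nablat_V + u_k(V)$, and the induced connection on $\End(H^{(k)})$ takes the form
\[
\Nablae_V T^{(k)}_f = \Nablat_V T^{(k)}_f + [u_k(V), T^{(k)}_f].
\]
The trivial piece $\Nablat_V T^{(k)}_f$ contributes the leading term $T^{(k)}_{V[f]}$ coming from the $V$-derivative of the symbol, together with corrections arising from the $\sigma$-dependence of the projections $\pik_\sigma$; these residual terms are again of Toeplitz type, controlled by the Bergman kernel asymptotics that underlie the construction of $T^{(k)}_f$.

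Next I expand the commutator $[u_k(V), T^{(k)}_f]$. The essential input is the Toeplitz symbol calculus of Bordemann--Meinrenken--Schlichenmaier and Karabegov--Schlichenmaier: composition of Toeplitz operators has a full asymptotic expansion
\[
T^{(k)}_f T^{(k)}_g \sim \sum_{l \geq 0} T^{(k)}_{c_l(f,g)}\, k^{-l},
\]
in operator norm, with $c_0(f,g)=fg$ and $c_l$ bi-differential, and commutators of Toeplitz operators with the K\"ahler Laplacian and with first-order differential operators admit analogous Toeplitz expansions whose coefficients are differential operators applied to $f$. Applying this term by term to $u_k(V)$ and re-expanding $(4k+2n)^{-1}$ as a geometric series in $(k+n/2)^{-1}$, I obtain
\[
\Nablae_V T^{(k)}_f \sim T^{(k)}_{V[f]} + \sum_{l \geq 1} T^{(k)}_{R_l(V) f}\,(k+n/2)^{-l},
\]
where each $R_l(V) \in \D(M)$ depends smoothly on $\sigma\in\T$ and linearly on $V$.

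Setting $\tD^{(l)}(V) = R_l(V)$ for $l\geq 1$ and $\tD^{(0)}(V)=0$, the assignment $D_V f = V[f] + \tD(V) f$ with $\tD(V) = \sum_{l\geq 1} \tD^{(l)}(V) h^l$ defines a formal connection satisfying $\tD = 0 \bmod h$ and the asymptotic identity \eqref{Tdf2}. Uniqueness is obtained from the fundamental norm asymptotic $\|T^{(k)}_g\| \to \|g\|_\infty$: if two such formal connections both satisfied \eqref{Tdf2}, their difference would yield an expansion $\sum_{l\geq 1} T^{(k)}_{g_l}(k+n/2)^{-l} = O(k^{-\infty})$, forcing $g_l \equiv 0$ for every $l$, and then, since this holds for all $f$, forcing the corresponding differential operators to coincide.

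The main obstacle is the explicit Toeplitz expansion of $[u_k(V), T^{(k)}_f]$, and in particular verifying that each coefficient $R_l(V)$ is a genuine differential operator on $M$ acting on $f$, rather than merely a pseudo-differential symbol. This is where the concrete second-order structure of $u_k(V)$ matters: since the bi-differential operators $c_l$ appearing in the Toeplitz product expansion are differential in each argument, the nested commutators of $\Delta_{G(V)}$, $\nabla_{G(V)\cdot dF}$, and multiplication by $V'[F]$ with $T^{(k)}_f$ can be tracked order by order in $(k+n/2)^{-1}$, producing at each order a finite-order differential operator applied to $f$, as required.
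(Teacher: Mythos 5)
Your overall skeleton --- decompose $\Nablae_V T^{(k)}_f$ via $\Nablae_V\Phi = \Nablaet_V\Phi + [\Phi,u(V)]$ (the paper's equation~\eqref{eq:9}), expand everything as an asymptotic series of Toeplitz operators, and deduce uniqueness from Theorem~\ref{BMS1} --- is the same route the paper indicates (the proof itself is deferred to \cite{A5}, but the resulting formula~\eqref{formalcon} and the decomposition~\eqref{eq:9} display its structure). The uniqueness argument you give is correct and essentially complete.

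The existence half, however, has a genuine gap, and it sits exactly where the content of the theorem lies. First, the trivial-connection term is $V[\pi^{(k)}]M_f + \pi^{(k)}M_{V[f]}$, and you dispose of the $V[\pi^{(k)}]M_f$ piece with ``controlled by the Bergman kernel asymptotics''; but $V[\pi^{(k)}]$ is not $O(k^{-1})$ a priori, and the assertion $\tD = 0 \bmod h$ rests on a cancellation between this contribution and the order-$k^0$ part of the commutator with $u(V)$ (in the paper's formula, between $-V[F]f$ and the leading term of $V[F]\tBTstar f$), which you assert rather than verify. The mechanism the paper uses is the adjoint trick encoded in~\eqref{eq:2}: extend $T^{(k)}_f$ to $\Hom(\cH^{(k)},H^{(k)})$, move derivatives onto $o(V)^*$ via the $L_2$-pairing and holomorphicity of sections of $H^{(k)}$, and land on $\pi^{(k)}o(V)^*f\pi^{(k)} + \pi^{(k)}f\,o(V)\pi^{(k)} = T^{(k)}_{E(V)f}$ for a genuine differential operator $E(V)$. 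Second, the analytic input you invoke --- that compositions of the Bergman projection with \emph{differential operators}, not just with multiplication operators, admit Toeplitz expansions whose coefficients are differential operators applied to $f$ --- is not contained in Theorems~\ref{BMS1} and~\ref{S} as stated, which concern only products $T^{(k)}_{f_1}T^{(k)}_{f_2}$ of Toeplitz operators of smooth functions. You correctly flag this as ``the main obstacle,'' but it cannot be deferred: together with the computation of $V[\pi^{(k)}]$ it \emph{is} the theorem. As written, the proposal is a plausible outline of the argument of \cite{A5} and \cite{AG} rather than a proof.
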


Now fix an $f\in C^\infty(M)$, which does not depend on $\sigma \in
\mathcal{T}$, and notice how the fact that $\tilde D = 0 \mbox{ mod }
h$ implies that
\[\left\| \Nablae_V T^{(k)}_{f} \right\| = O(k^{-1}).\]
This expresses the fact that the Toeplitz operators are asymptotically
flat with respect to the Hitchin connection.

We define a mapping class group equivariant formal trivialization of
$D$ as follows.

\begin{definition}\label{formaltrivi2}
  A formal trivialization of a formal connection $D$ is a smooth map
  $P : \T \ra \D_h(M)$ which modulo $h$ is the identity, for all
  $\s\in \T$, and which satisfies
  \[D_V(P(f)) = 0,\] for all vector fields $V$ on $\T$ and all $f\in
  C^\infty_h(M)$. Such a formal trivialization is mapping class group
  equivariant if $P(\phi(\sigma)) = \phi^* P(\sigma)$ for all $\sigma
  \in \T$ and $\phi\in \Gamma$.
\end{definition}

Since the only mapping class group invariant functions on the moduli
space are the constant ones (see \cite{Go1}), we see that in the case
where $M$ is the moduli space, such a $P$, if it exists, must be
unique up to multiplication by a formal constant, i.e. an element of
$\bC_h = \bC\formal{h}$.

Clearly if $D$ is not flat, such a formal trivialization cannot exist
even locally on $\T$. However, if $D$ is flat and its zero-order term
is just given by the trivial connection in $C_h$, then a local formal
trivialization exists, as proved in \cite{A5}.

Furthermore, it is proved in \cite{A5} that flatness of the formal
Hitchin connection is implied by projective flatness of the Hitchin
connection. As was proved by Hitchin in \cite{H}, and stated above in
Theorem \ref{projflat}, this is the case when $M$ is the moduli
space. 

In Section~\ref{sec:form-hitch-conn} we discuss how this formal
trivialization of a formal connection give a way of defining a star
product from the Berezin--Topelitz star product, which turn out not to
depend on the complex structure $\s$. In
Section~\ref{sec:form-hitch-conn} we furthermore discuss the lower
order terms of formal trivialization and the star product.

In Section~\ref{sec:abelian} we consider the all of the above objects
in the case where the manifold $M$ is a principal polarized abelian
variety. We furthermore discuss abelian Chern--Simons theory and the
moduli space of $U(1)$-connections on a closed surface $\Sigma$. We
find a flat Hitchin connection on the $U(1)$-moduli space $M$ and find
a formal trivialization $P$ of the formal Hitchin connection. With
this formal trivialization we define the curve operators to a cylinder
$\Sigma\times [0,1]$ with a link $\gamma$ inside, to be the Toeplitz operator
associated to the corresponding holonomy function $h_\gamma$ on $M$,
$Z^{(k)} = T^{(k)}_{h_\gamma}$. With this definition of a curve
operator we show that
\[
\inner{h_{\gamma_1}}{h_{\gamma_2}} = \lim_{k\to \infty}\inner{Z^{(k)}(\Sigma,\gamma_1)}{Z^{(k)}(\Sigma,\gamma_2)}
\]
and as required by the TQFT axioms that
\[
Z^{(k)}(\Sigma\times S^1) = \dim(Z^{(k)}(\Sigma)).
\]

\section{The Hitchin connection}\label{ghc}

In this section, we review our construction of the Hitchin connection
using the global differential geometric setting of \cite{A5}. This
approach is close in spirit to Axelrod, Della Pietra and Witten's in
\cite{ADW}, however we do not use any infinite dimensional gauge
theory. In fact, the setting is more general than the gauge theory
setting in which Hitchin in \cite{H} constructed his original
connection. But when applied to the gauge theory situation, we get the
corollary that Hitchin's connection agrees with Axelrod, Della Pietra
and Witten's.

Hence, we start in the general setting and let $(M,\omega)$ be any
compact symplectic manifold.

\begin{definition}\label{prequantumb}
  A prequantum line bundle $(\L, (\cdot,\cdot), \nabla)$ over the
  symplectic manifold $(M,\omega)$ consist of a complex line bundle
  $\L$ with a Hermitian structure $(\cdot,\cdot)$ and a compatible
  connection $\nabla$ whose curvature is
  \begin{align*}
    F_\nabla(X,Y) = [\nabla_X, \nabla_Y] - \nabla_{[X,Y]} = -i \omega
    (X,Y).
  \end{align*}
  We say that the symplectic manifold $(M,\omega)$ is prequantizable
  if there exist a prequantum line bundle over it.
\end{definition}

Recall that the condition for the existence of a prequantum line
bundle is that $[\frac{\omega}{2\pi}]\in \im(H^2(M,\bZ) \ra
H^2(M,\bR))$. Furthermore, the inequivalent choices of prequantum line
bundles (if they exist) are parametriced by $H^1(M,U(1))$ (see
e.g. \cite{Woodhouse}).

We shall assume that $(M,\omega)$ is prequantizable and fix a
prequantum line bundle $(\L, (\cdot,\cdot), \nabla)$.

%Fra Jakobs Del A-rapport

Before dwelving into the details we discuss general facts about families
of K\"{a}hler structures on a symplectic manifold.

\subsection*{Families of K\"{a}hler structures}
\label{sec:famil-kahl-struct}

From now on we assume $\T$ is a smooth manifold. Later we impose
extra structure.

  A \emph{family of K\"{a}hler structures} on a symplectic manifold
  $(M,\omega)$ parametrized by $\T$ is a map
\[
I : \T \to C^\infty(M,\End{TM}),
\]
that to each element $\sigma \in \T$ associates an integrable
and compatible almost complex structure. $I$ is said to be smooth if $I$
defines a smooth section of $\pi_M^* \End(TM) \to \T \times M$.

For each point $\sigma \in \T$ we define $M_\sigma$ to be $M$ with the
K\"{a}hler structure defined by $\omega$ and $I_\sigma := I(\sigma)$,
and the K\"{a}hler metric is denoted by $g_\sigma$.

Every $I_\sigma$ is an almost complex structure and hence induce a splitting of the complexified tangent bundle
$TM_\C$, denoted by $TM_\C = T_\sigma \oplus \bT_\sigma$, and the
projection to each factor is given by
\[
\pi^{1,0}_\sigma = \frac{1}{2}(Id - i I_\sigma) \quad \text{and} \quad
\pi^{0,1}_\sigma = \frac{1}{2}(Id + i I_\sigma).
\]

If $I_\sigma^2 =
-Id$ is differentiated
along a vector field $V$ on $\T$, we get
\[
V[I]_\sigma I_\sigma + I_\sigma V[I]_\sigma = 0,
\]
and hence $V[I]_\sigma$ changes types on $M_\sigma$. Then for each
$\sigma$, $V[I]_\sigma$ give an element of
\[
C^\infty(M,((\bT_\sigma)^* \otimes T_\sigma)\oplus ((T_\sigma)^*
\otimes \bT_\sigma)), 
\]
and we have a splitting $V[I]_\sigma = V[I]_\sigma' + V[I]_\sigma''$
where
\[
V[I]'_\sigma \in C^\infty(M,(\bT_\sigma)^* \otimes T_\sigma) \quad
\text{and} \quad V[I]''_\sigma \in C^\infty(M,(T_\sigma)^*\otimes \bT_\sigma).
\]
This splitting of $V[I]$ happens for every vector field on $\T$ and
actually induce an almost complex structure on $\T$.

Since $V[I]_\sigma$ is a smooth section of $TM_\C \otimes
T^*M_\C$ and the symplectic structure is a smooth section of
$T^*M_\C \otimes T^*M_\C$ we can define a bivector field
$\tilde{G}(V)$ by contraction with the symplectic form
\[
\tilde{G}(V) \cdot \omega = V[I].
\]
$\tilde{G}(V)$ is unique since $\omega$ is non-degenerate. By definition of the
K\"{a}hler metric, $g$ is the contraction of $\omega$ and $I$, $g =
\omega \cdot I$.  We use the
$\cdot$-notation for contraction in the following way
\[
g(X,Y) = (\omega \cdot I)(X,Y) = \omega(X,IY) \quad \text{and} \quad
g(X,Y) = -(I \cdot \omega)(X,Y) = -\omega(IX,Y).
\] Since $\omega$ is independent of $\sigma$ taking
the derivative of this identity in the direction of a vector field $V$
on $\T$ we obtain
\[
V[g] = \omega \cdot V[I] = \omega \cdot \tilde{G}(V) \cdot \omega.
\]
Since $g$ is symmetric so is $V[g]$, and with $\omega$ being
anti-symmetric $\tilde{G}(V)$ is symmetric. We furthermore have that
$\omega$ is of type $(1,1)$ when regarded as a K\"{a}hler form on
$M_\sigma$, using this and the fact that $V[I]_\sigma$ changes types
on $M_\sigma$ we get that $\tilde{G}(V)$ splits as $\tilde{G}(V) =
G(V) +\bar{G}(V)$, where
\[
G(V) \in C^\infty(M,S^2(T')) \quad \text{and} \quad \bar{G}(V) \in C^\infty(M,S^2(\bT)).
\]
In the above we have suppressed the dependence on $\sigma$, since this
is valid for any $\sigma$. 

\subsection*{Holomorphic families of K\"{a}hler structures}
\label{sec:holom-famil-kahl}

Let us now assume that $\T$ furthermore is a complex manifold. We can
then ask $I: \T \to C^\infty(M,\End(TM))$ to be holomorphic. By using
the splitting of $V[I]$ we make the following definition.
\begin{definition}
  Let $\T$ be a complex manifold and $I$ a smooth family of complex
  structures on $M$ parametrized by $\T$. Then $I$ is holomorphic if
\[
V'[I] = V[I]' \quad \text{and} \quad V''[I] = V[I]''
\]
for all vector fields $V$ on $\T$.
\end{definition}

Assume $J$ is an integrable almost complex structure on $\T$ induced
by the complex structure on $\T$. $J$ induces an almost complex
structure, $\hat{I}$ on $\T \times M$ by
\[
\hat{I}(V \oplus X) = JV \oplus I_\sigma X,
\]
where $V+X \in T_{(\sigma,p)(\T \times M)}$. In \cite{AGL} a simple
calculation shows that the Nijenhuis tensor on $\T \times M$ vanish
exactly when $\pi^{0,1}V'[I]X = 0$ and $\pi^{1,0}V''[I]X = 0$, which
by the Newlander--Nirenberg theorem shows that $\hat{I}$ is integrable
if and only if $I$ is holomorphic, hence the name. 

Remark that for a holomorphic family of K\"{a}hler structures on
$(M,\omega)$ we have 
\[
\tilde{G}(V') \cdot \omega = V'[I] = V[I]' = G(V)\cdot \omega,
\]
which implies $\tilde{G}(V') = G(V)$. We can in the same way show that
$\bar{G}(V) = \tilde{G}(V'')$.

\subsection*{Rigid families of K\"{a}hler structures}
\label{sec:rigid-famil-kahl}

In constructing an explicit formula for the Hitchin connection we need
the following rather restrictive assumption on our family of
K\"{a}hler structures.

\begin{definition} \label{rigid}
  A family of K\"{a}hler structures $I$ on $M$ is called \emph{rigid}
  if
\[
\nabla_{X''}G(V) = 0
\]
for all vector fields $V$ on $\T$ and $X$ on $M_\sigma$.
\end{definition}
Equivalently we could give the above equation in terms of the induced
$\bar{\partial}_\sigma$-operator on $M_\sigma$,
\[
\bar{\partial}_\sigma (G(V)_\sigma) = 0,
\]
for all $\sigma \in \T$ and all vector fields $V$ on $\T$.

There are several examples of rigid families of K\"{a}hler
structures, see e.g. \cite{AGL}. It should be remarked that this condition is also built into the
arguments of \cite{H}.

\subsection*{The Hitchin connection}
\label{sec:hitchins-connection}

Now all tools are defined and we can construct the Hitchin
connection. In Theorem~\ref{HCE} we need $M$ to be
compact, so let us assume this. Recall the quantum spaces
\[
H^{(k)}_\sigma = H^0(M_\s,\L^k_\s)\{s \in C^\infty(M,\cL^k) \, | \,
\nabla^{0,1}_\sigma s = 0\},
\]
where $\nabla^{0,1}_\sigma = \frac{1}{2}(Id +
iI_\sigma)\nabla$. 

It is not clear that these spaces form a vector bundle over $\T$. But by
constructing a bundle, where these sit as subspaces of each of the
fibers, and a connection in this bundle preserving $H^{(k)}_\sigma$,
$H^{(k)}$ will be a subbundle over $\T$.

Define the trivial bundle $\cH^{(k)} = \T \times C^\infty(M,\cL^k)$ of
infinite rank. The finite dimensional subspaces $H^{(k)}_\sigma$ sits
inside each of the fibers. This bundle has of course the trivial
connection $\nabla^t$, but we
seek a connection preserving $H^{(k)}_\sigma$.
\begin{definition} \label{def:hitchin}
  A \emph{Hitchin connection} is a connection $\hat{\nabla}$ in
  $\cH^{(k)}$, which preserves the subspaces $H^{(k)}_\sigma$, and is
  of the form
\[
\hat{\nabla} = \nabla^t + u,
\]
where $u \in \Omega^1(\T,\D(M,\cL^k))$ is a 1-form on $\T$ with values
in differential operators acting on sections of $\cL^k$.
\end{definition}

By analyzing the condition $\nabla^{0,1}_\sigma \hat{\nabla}_Vs = 0$ for
every vector field $V$ on $\T$, we hope to find an explicit expression for
$u$. If we express the above condition in terms of $u$, $u$ should
satisfy
\[
0 = \nabla^{0,1}_\sigma V[s] + \nabla^{0,1}_\sigma u(V)s,
\]
and if we differentiate $\nabla^{0,1}_\sigma s = 0$ along the a vector
field $V$ on $\T$ we get
\[
0 = V[\nabla^{0,1}_\sigma s] = V[\frac{1}{2}(Id + iI_\sigma)\nabla s] =
\frac{i}{2}V[I_\sigma]\nabla s + \nabla^{0,1}_\sigma V[s].
\]
If we combine the previous two equations we get the following
\begin{lemma}
  \label{lem_hitchin:1}
  The connection $\hat{\nabla} = \nabla^t + u$ preserves
  $H^{(k)}_\sigma$ for all $\sigma \in \T$ if and only if $u$ satisfy
  the equation
\begin{equation} \label{eq_hitchin:1}
\nabla^{0,1}_\sigma u(V) s = \frac{i}{2}V[I]_\sigma
\nabla^{1,0}_\sigma s
\end{equation}
for all $\sigma \in \T$ and all vector fields $V$ on $\T$.
\end{lemma}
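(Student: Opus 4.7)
The plan is to establish the iff by direct computation, essentially packaging the two displays in the paragraph immediately preceding the lemma into a single equivalence. First I would fix a smooth section $s$ of $\cH^{(k)}$ whose value at each $\sigma\in\T$ lies in $H^{(k)}_\sigma$, i.e.\ satisfies $\nabla^{0,1}_\sigma s(\sigma) = 0$, together with a vector field $V$ on $\T$. Applying $\nabla^{0,1}_\sigma$ to $\hat{\nabla}_V s = V[s] + u(V) s$ yields
\[
\nabla^{0,1}_\sigma \hat{\nabla}_V s = \nabla^{0,1}_\sigma V[s] + \nabla^{0,1}_\sigma u(V) s,
\]
so the preservation condition $\nabla^{0,1}_\sigma \hat{\nabla}_V s = 0$ is equivalent to the identity $\nabla^{0,1}_\sigma u(V) s = -\nabla^{0,1}_\sigma V[s]$.

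Next I would differentiate the pointwise identity $\nabla^{0,1}_\sigma s(\sigma) = 0$ along $V$, which is exactly the computation already written into the text, to conclude
\[
\nabla^{0,1}_\sigma V[s] = -\tfrac{i}{2} V[I]_\sigma \nabla s.
\]
Since $\nabla^{0,1}_\sigma s = 0$, we have $\nabla s = \nabla^{1,0}_\sigma s$ at $\sigma$, so $\nabla^{0,1}_\sigma V[s] = -\tfrac{i}{2} V[I]_\sigma \nabla^{1,0}_\sigma s$. Substituting into the preceding identity gives precisely \eqref{eq_hitchin:1}, and conversely, starting from \eqref{eq_hitchin:1}, one reverses the chain of implications to conclude preservation.

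The only subtle point is interpretational: at this stage of the paper one does not yet know that $H^{(k)}$ is a smooth subbundle of $\cH^{(k)}$, so I would read the phrase ``preserves $H^{(k)}_\sigma$'' as ``for every smooth section $s$ of $\cH^{(k)}$ with $s(\sigma) \in H^{(k)}_\sigma$ for all $\sigma$, the section $\hat{\nabla}_V s$ also satisfies $(\hat{\nabla}_V s)(\sigma) \in H^{(k)}_\sigma$''. With this reading the computation above is pointwise in $\sigma$, and each $s_0 \in H^{(k)}_{\sigma_0}$ appearing in \eqref{eq_hitchin:1} is by construction the value at $\sigma_0$ of such a test section, so no $\bar\partial$-extension argument is required. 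Consequently the real work in constructing a Hitchin connection is not in this lemma but in subsequently solving \eqref{eq_hitchin:1} for an explicit $u(V)$, which will be the content of the next theorem.
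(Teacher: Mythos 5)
Your proof is correct and is essentially identical to the paper's own argument: the paper likewise applies $\nabla^{0,1}_\sigma$ to $\hat{\nabla}_V s = V[s] + u(V)s$ and then differentiates the holomorphicity condition $\nabla^{0,1}_\sigma s = 0$ along $V$ to obtain $\nabla^{0,1}_\sigma V[s] = -\frac{i}{2}V[I]_\sigma \nabla s = -\frac{i}{2}V[I]_\sigma \nabla^{1,0}_\sigma s$, combining the two displays to get \eqref{eq_hitchin:1}. Your closing remark on how to read ``preserves $H^{(k)}_\sigma$'' before the subbundle structure is established is a reasonable clarification of a point the paper leaves implicit.
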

If the conclusion is true the collection of subspaces $H^{(k)}_\sigma
\subset C^\infty (M,\cL^k)$ constitute a subbundle $H^{(k)}$ of
$\cH^{(k)}$.

Let us now assume that $\T$ is a complex manifold, and that the family $I$ is
holomorphic. First of all, $\nabla^{1,0}_\sigma s$ is a section of
$(T_\sigma)^*\otimes \cL^k$, so it is constant in the
$\bT_\sigma$-direction, which is why $V[I]_\sigma''\nabla^{1,0}_\sigma
s = 0$, and by holomorphicity $V''[I] = V[I]''$, so $V''[I]_\sigma
\nabla^{1,0}_\sigma s = 0$. Hence we can choose $u(V'') = 0$, and we
therefore only need to focus on $u$ in the $V'$-direction.

$u(V)$ should be a differential operator acting on sections of $\cL^k$,
and be related to $I$, so let us construct an operator from
$I$.

Given a smooth symmetric bivector field $B$ on $M$ we define a differential operator on
smooth sections of $\cL^k$ by
\[
\Delta_B = \nabla^2_{B} + \nabla_{\delta B},
\]
where $\delta B$ is the divergence of a symmetric bivector field
\[
\delta_\sigma(B) = \tr \nabla_\sigma B.
\]
$\nabla^2_B$ is defined by 
\[
\nabla^2_{X,Y} = \nabla_{X} \nabla_{Y} s - \nabla_{\nabla_{X}Y}s,
\]
which is tensorial in the vector fields $X$ and $Y$. Thus we can
evaluate it on a bivector field, and have thus defined $\Delta_B$.

Recall the bivector field $G(V)$ defined by $G(V) \cdot \omega =
V'[I]$. Using the above construction give a differential operator
$\Delta_{G(V)}: C^\infty(M,\cL^k) \to C^\infty(M,\cL^k)$. Locally $G(V)
= \sum_j X_j \otimes Y_j$, and
\begin{equation} \label{eq_hitchin:2}
\Delta_{G(V)} = \nabla^2_{G(V)} +\nabla_{\delta G(V)} = \sum_j
\nabla_{X_j}\nabla_{Y_j} + \nabla_{\delta(X_j)Y_j},
\end{equation}
since $\delta(X_j\otimes Y_j) = \delta(X_j)Y_j + \nabla_{X_j}Y_j$,
where $\delta(X)$ is the usual divergence of a vector field, which can
be defined in many ways e.g. in terms of the Levi-Cevitta connection
on $M_\sigma$ by $\delta(X) = \tr\nabla_\sigma X$. 

The second order differential operator $\Delta_{G(V)}$ is the
cornerstone in the construction of $u(V)$. The idea in Andersen's
construction is to calculate $\nabla^{0,1}\Delta_{G(V)}s$ and find
remainder terms, which cancel other terms such that $\Delta_{G(V)}$
with these correction terms satisfy equation~\eqref{eq_hitchin:1}. When
calculating $\nabla^{0,1}\Delta_{G(V)}s$ the trace of the curvature of
$M_\sigma$ show up -- that is the Ricci curvature
$\Ric_\sigma$. From Hodge decomposition $\Ric_\sigma =
\Ric_\sigma^H + 2i\partial_\sigma \bar{\partial}_\sigma F_\sigma$
where $\Ric^H$ is harmonic and $F_\sigma$ is the Ricci potential. As with the family of
K\"{a}hler structures the Ricci potentials $F_\sigma$ is a family of
Ricci potentials parametrized by $\T$ and can therefore be
differentiated along a vector field on $\T$.

Define $u \in \Omega^1(\T, \D(M,\cL^k))$ by
\begin{equation}
  \label{eq_hitchin:3}
  u(V) = \frac{1}{4k+2n}(\Delta_{G(V)} + 2 \nabla_{G(V) \cdot dF} + 4k
  V'[F]).
\end{equation}

\begin{theorem}[Andersen \cite{A5}] \label{HCE}
  Let $(M,\omega)$ be a compact prequantizable symplectic manifold
  with $H^1(M,\bR) = 0$ and first Chern class $c_1(M,\omega) = n
  [\frac{\omega}{2\pi}]$. Let $I$ be a rigid holomorphic family of
  K\"{a}hler structures on $M$ parametrized by a complex manifold
  $\T$. Then
\[
\hat{\nabla}_V = \nabla^t_V + \frac{1}{4k+2n}(\Delta_{G(V)} + 2
\nabla_{G(V) \cdot dF} +4 k V'[F])
\]
is a Hitchin connection in the bundle $H^{(k)}$ over $\T$.
\end{theorem}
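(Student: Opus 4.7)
The plan is to apply Lemma~\ref{lem_hitchin:1} directly: it suffices to verify that the proposed $u(V)$ satisfies
\[
\nabla^{0,1}_\sigma u(V) s \;=\; \tfrac{i}{2}\, V[I]_\sigma \nabla^{1,0}_\sigma s
\]
for every $\sigma\in\T$, every vector field $V$ on $\T$, and every $s\in H^0(M_\sigma,\cL^k)$. Since $V''[I]_\sigma \nabla^{1,0}_\sigma s = 0$ by type considerations and $V[I]''=V''[I]$ by holomorphicity of the family, one may (and should) set $u(V'')=0$ and work only in the $(1,0)$-direction, where $V[I] = G(V)\cdot\omega$. So the whole proof reduces to a local computation of $\nabla^{0,1}_\sigma \Delta_{G(V)} s$, $\nabla^{0,1}_\sigma \nabla_{G(V)\cdot dF} s$, and $\nabla^{0,1}_\sigma V'[F]\cdot s$ for holomorphic $s$.

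The heart of the argument is commuting $\nabla^{0,1}$ past the second-order operator $\Delta_{G(V)} = \nabla^2_{G(V)} + \nabla_{\delta G(V)}$. Writing $G(V)$ locally as $\sum_j X_j\otimes Y_j$ with $X_j,Y_j$ of type $(1,0)$, one pushes $\nabla^{0,1}$ through using two ingredients: first, rigidity $\nabla_{X''}G(V)=0$ ensures that differentiating $G(V)$ in an antiholomorphic direction produces no new term; second, the curvature identity $F_\nabla = -i\omega$ upgraded to $\cL^k$ gives $[\nabla^{0,1}_{\bar Z},\nabla_{X_j}] = -ik\,\omega(\bar Z, X_j)$. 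Applying this carefully to $\nabla_{X_j}\nabla_{Y_j} s$ for holomorphic $s$ collapses, via $G(V)\cdot\omega = V'[I]$, precisely to the desired right-hand side $\tfrac{i}{2}V[I]\nabla^{1,0}s$ with a prefactor of $4k$, explaining half of the denominator $4k+2n$.

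The remaining commutator contributions accumulate a Ricci-curvature term, which is where the hypotheses on $c_1$ and $H^1(M,\bR)$ enter. The condition $c_1(M,\omega)=n[\omega/2\pi]$ means the harmonic part of the Ricci form is $n\omega$, and the vanishing $H^1(M,\bR)=0$ lets us write
\[
\Ric_\sigma \;=\; n\omega \,+\, 2i\,\partial_\sigma\bar\partial_\sigma F_\sigma
\]
for a globally defined smooth Ricci potential $F_\sigma$. The $n\omega$-piece combines with the $\nabla^2_{G(V)}$-calculation above to upgrade the denominator from $4k$ to $4k+2n$. The remaining $2i\partial\bar\partial F$-piece, together with the derivative falling on $F$ when one differentiates the relation $\Ric = n\omega + 2i\partial\bar\partial F$ along $V$, is absorbed exactly by the correction terms $2\nabla_{G(V)\cdot dF}$ and $4kV'[F]$ in the definition of $u(V)$.

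The main obstacle is bookkeeping. Every commutator produces several terms involving $\nabla X_j$, $\nabla Y_j$, $\delta X_j$, Christoffel corrections, and curvature insertions; one must organize these so that the holomorphic-type contributions match $\tfrac{i}{2}V[I]\nabla^{1,0}s$, the antiholomorphic-type contributions vanish by rigidity, and the scalar contributions combine into the cancellation described above. The choice of splitting of $\Delta_{G(V)}$ as $\nabla^2_{G(V)}+\nabla_{\delta G(V)}$ is precisely what makes $\delta$-terms cooperate with Bianchi-type identities; verifying these cancellations — and in particular that each step is well-defined independently of local trivializations — is the technical crux.
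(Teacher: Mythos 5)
Your proposal follows essentially the same route the paper sketches for Theorem~\ref{HCE}: reduce via Lemma~\ref{lem_hitchin:1} to the identity $\nabla^{0,1}_\sigma u(V)s = \tfrac{i}{2}V[I]_\sigma\nabla^{1,0}_\sigma s$, discard the $V''$-direction by holomorphicity, commute $\nabla^{0,1}$ past $\Delta_{G(V)}$ using rigidity and the prequantum curvature $-ik\omega$ of $\cL^k$, and absorb the resulting Ricci contribution via $\Ric_\sigma = n\omega + 2i\partial_\sigma\bar\partial_\sigma F_\sigma$ together with the corrections $2\nabla_{G(V)\cdot dF}+4kV'[F]$. One small misattribution: the global Ricci potential exists on any compact K\"ahler manifold by the $\partial\bar\partial$-lemma, independently of $H^1(M,\bR)$; in Andersen's argument the hypothesis $H^1(M,\bR)=0$ is instead invoked to pass from closedness to exactness of a one-form arising in the bookkeeping that relates $\delta G(V)\cdot\omega$ to the terms involving $dF$ and $V'[F]$.
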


\begin{remark}
  The condition $c_1(M,\omega) = n[\frac{\omega}{2\pi}] = n
    c_1(\cL)$ can be removed by switching to the metaplectic
    correction. Here we make the same construction but now a square
    root of the canonical bundle of $(M,\omega)$ is tensored onto
    $\cL^k$. Such a square root exists exactly if the second
    Stiefel--Whitney class is $0$ -- that is if $M$ is spin, see
    \cite{AGL} and also \cite{Ch}.
\end{remark}
\begin{remark}
   The condition $H^1(M,\bR) = 0$ is used to make the calculations
    in the proof easier, but there is no known examples of manifolds
    with $H^1(M,\bR) \neq 0$, which satisfy the remaining conditions
    where the Hitchin connection cannot be built in this way. An
    example is the torus $T^{2n}$ which we will study in much greater
    detail in Section~\ref{sec:abelian}.
\end{remark}

\begin{remark}
  \label{rem:3}
  By using Toeplitz operator theory, it can be shown that under some
  further assumptions on the family of K\"{a}hler structures the
  Hitchin connection is actually projectively flat. A proof of this
  can be found in \cite{Gammelgaard}.
\end{remark}

Suppose $\Gamma$ is a group which acts by bundle automorphisms of
$\cL$ over $M$ preserving both the Hermitian structure and the
connection in $\cL$. Then there is an induced action of $\Gamma$ on
$(M,\omega)$. We will further assume that $\Gamma$ acts on $\T$ and
that $I$ is $\Gamma$-equivariant. In this case we immediately get the
following invariance.

\begin{lemma}
  \label{lem:3}
  The natural induced action of $\Gamma$ on $\cH^{(k)}$ preserves the
  subbundle $H^{(k)}$ and the Hitchin connection.
\end{lemma}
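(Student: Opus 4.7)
The plan is to verify naturality of every ingredient appearing in Theorem~\ref{HCE} under the $\Gamma$-action, so that the Hitchin connection automatically intertwines with itself. The action on $\cH^{(k)}=\T\times C^\infty(M,\cL^k)$ is defined by $\gamma\cdot(\sigma,s)=(\gamma\cdot\sigma,(\gamma^{-1})^*s)$, using the given lift of $\gamma\in\Gamma$ to a bundle automorphism of $\cL$ preserving both $(\cdot,\cdot)$ and $\nabla$; note that this automatically makes $\gamma$ act on $(M,\omega)$ as a symplectomorphism, since $\omega=iF_\nabla$.

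For the first claim, preservation of the subbundle $H^{(k)}$, I would check that $\gamma$ intertwines the holomorphic structures. Because $\gamma$ preserves $\nabla$, it intertwines covariant differentiation in $\cL^k$; combined with the $\Gamma$-equivariance $I_{\gamma\cdot\sigma}=d\gamma\circ I_\sigma\circ d\gamma^{-1}$, this yields the identity $(\gamma^{-1})^*\nabla^{0,1}_\sigma = \nabla^{0,1}_{\gamma\cdot\sigma}(\gamma^{-1})^*$ on sections. Hence $\gamma$ sends $H^0(M_\sigma,\cL^k)$ isomorphically onto $H^0(M_{\gamma\cdot\sigma},\cL^k)$, and therefore the family $H^{(k)}\subset\cH^{(k)}$ is $\Gamma$-invariant.

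For the second claim, it suffices to check that the one-form $u$ in~\eqref{eq_hitchin:3} satisfies $\gamma^*u(\gamma_*V)_{\gamma\cdot\sigma}(\gamma^{-1})^*=u(V)_\sigma$, since the trivial connection $\nabla^t$ is manifestly equivariant. Each constituent of $u$ is built canonically from $(\omega,I,\nabla)$. The bivector $G(V)$ is determined by $G(V)\cdot\omega=V'[I]$, so equivariance of $\omega$, $I$, and the splitting $V[I]=V[I]'+V[I]''$ forces $\gamma_*G(V)_\sigma=G(\gamma_*V)_{\gamma\cdot\sigma}$. The operator $\Delta_B=\nabla^2_B+\nabla_{\delta B}$ is constructed from $\nabla$ and the Levi--Civita connection of $M_\sigma$, both of which $\gamma$ preserves (the latter because $\gamma:M_\sigma\to M_{\gamma\cdot\sigma}$ is a K\"ahler isometry), so $\Delta_{G(V)}$ transforms covariantly. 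Finally, the Ricci potential $F_\sigma$ is the unique mean-zero solution of $\Ric_\sigma-\Ric^H_\sigma=2i\partial_\sigma\bar\partial_\sigma F_\sigma$, and $\gamma$ being a K\"ahler isomorphism between $M_\sigma$ and $M_{\gamma\cdot\sigma}$ pulls back Ricci forms, harmonic projections and Hodge decompositions, yielding $F_{\gamma\cdot\sigma}\circ\gamma=F_\sigma$. Differentiating and applying $G(V)\cdot d(\cdot)$ gives equivariance of the remaining two terms.

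The main obstacle, such as it is, is the naturality of the Ricci potential, where one must check that the mean-zero normalization is $\Gamma$-invariant; this follows immediately from $\gamma$ being a symplectomorphism, so $\int_M(F_\sigma\circ\gamma^{-1})\omega^n=\int_M F_\sigma\omega^n=0$. Assembling these equivariances term by term shows $\gamma^*\hat\nabla_V(\gamma^{-1})^*=\hat\nabla_{\gamma_*V}$ as operators on $\cH^{(k)}$, which is precisely the statement that the Hitchin connection is $\Gamma$-invariant.
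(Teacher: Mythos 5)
Your proposal is correct and follows exactly the route the paper intends: the paper states the lemma as an immediate consequence of the hypotheses (bundle automorphisms preserving $(\cdot,\cdot)$ and $\nabla$, together with $\Gamma$-equivariance of $I$), and your argument simply spells out the term-by-term naturality of $G(V)$, $\Delta_{G(V)}$, and the Ricci potential that makes this "immediate." The only point the paper leaves entirely tacit --- the compatibility of the mean-zero normalization of $F_\sigma$ with the action --- is one you correctly identify and dispose of via invariance of the Liouville volume.
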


We are actually interested in the induced connection $\Nablae$ in
the endomorphism bundle $\End(H^{(k)})$. Suppose $\Phi$ is a section
of $\End(H^{(k)})$. Then for all sections $s$ of $H^{(k)}$ and all
vector fields $V$ on $\T$, we have that
\[
(\Nablae_V \Phi)(s) = \Nabla_V \Phi(s) - \Phi(\Nabla_V(s)).
\]
Assume now that we have extended $\Phi$ to a section of
$\Hom(\cH^{(k)},H^{(k)})$ over $\T$. Then
\begin{equation}
  \label{eq:9}
  \Nablae_V \Phi = \Nablaet_V \Phi + [\Phi, u(V)],
\end{equation}
where $\Nablaet$ is the trivial connection in the trivial bundle
$\End(\cH^{(k)})$ over $\T$.

\section{Toeplitz operators on compact K{\"a}hler manifolds}
\label{BZdq}
In this section we discuss the Toeplitz operators on compact K\"{a}hler
manifolds $(M,\omega)$ with K\"{a}hler structures parametrized by a
smooth manifold $\T$ and their asymptotics as the level $k$ goes to infinity. 

For each $f \in C^\infty(M)$ we consider the differential operator
$M^{(k)}_f: C^\infty(M,\cL^k) \to C^\infty(M,\cL^k)$ given by 
\[ M_f^{(k)}(s) = fs \] for all $s \in H^0(M,\cL^k)$.

These operators act on $C^\infty(M,\cL^k)$ and therefore also on the
trivial bundle $\cH^{(k)}$, however they do not preserve the subbundle
$H^{(k)}$. There is however a solution to this, which is given by the
Hilbert space structure. Integrating the inner product of two sections
of $\cL^k$ against the volume form associated to the symplectic form
gives the pre-Hilbert space structure on $C^\infty(M)$
\[
\inner{s_1}{s_2} = \frac{1}{m!} \int_M (s_1,s_2)\omega^m.
\]This is not only a pre-Hilbert space structure on
$C^\infty(M,\cL^k)$ but also on the trivial bundle $\cH^{(k)}$ which is
of course compatible with the trivial connection in this bundle. This
pre-Hilbert space structure induces a Hermitian structure
$\inner{\cdot}{\cdot}$ on the finite rank subbundle $H^{(k)}$ of
$\cH^{(k)}$. The Hermitian structure $\inner{\cdot}{\cdot}$ on
$H^{(k)}$ also induces the operator norm on $\End(H^{(k)})$. By the
finite dimensionality of $H^{(k)}_\sigma$ in $\cH^{(k)}_\sigma$ we
have the orthogonal projection $\pi^{(k)}_\sigma: \cH^{(k)}_\sigma \to
H^{(k)}_\sigma$. From these projections we can construct the Toeplitz
operators associated to any smooth function $f \in C^\infty(M)$. It is
the operator $T^{(k)}_{f,\sigma}: \cH^{(k)}_\sigma \to H^{(k)}_\sigma$
defined by
\[
T^{(k)}_{f,\sigma}(s) = \pi^{(k)}_\sigma(fs)
\]
for any element $s \in \cH^{(k)}_\sigma$ and any point $\sigma \in
\T$. Since the projections form a smooth map $\pi^{(k)}$ from $\T$ to
the space of bounded operators in the $L_2$-completion of
$C^\infty(M,\cL^k)$ the Toeplitz operators are smooth sections
$T^{(k)}_f$ of the bundle of homomorphisms $\Hom(\cH^{(k)},H^{(k)})$ and restrict to
smooth sections of $\End(H^{(k)})$.

\begin{remark}
  \label{rem:2}
  It should be remarked that the above construction could be used for
  any Pseudo-differential operator $A$ on $M$ with coefficients in
  $\cL^k$ -- it can even depend on $\s$, and we will then consider it
  as a section of $\Hom(\cH^{(k)},H^{(k)})$. However when we consider
  their asymptotic expansions or operator norms, we implicitly
  restrict them to $H^{(k)}$ and consider them as sections of
  $\End(H^{(k)})$ -- or as $\pi^{(k)}A \pi^{(k)}$.
\end{remark}

We need the following two theorems on Toeplitz operators to
proceed. The first is due to Bordemann, Meinrenken and Schlichenmaier
(see \cite{BMS}).

\begin{theorem}[Bordemann, Meinrenken and Schlichenmaier]\label{BMS1}
For any $f\in C^\infty(M)$ we have that
\[\lim_{k\ra \infty}\|T_{f}^{(k)}\| = \sup_{x\in M}|f(x)|.\]
\end{theorem}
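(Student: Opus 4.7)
The plan is to establish the upper and lower bounds separately; the upper bound holds for every $k$ and is essentially trivial, while the lower bound is asymptotic and requires a peaked-section construction.

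For the upper bound, fix $\s \in \T$ and $s \in H^{(k)}_\s$ of unit norm. Since $\pi^{(k)}_\s$ is an orthogonal projection and hence norm-contracting, and pointwise multiplication by $f$ on $C^\infty(M,\cL^k)$ has $L^2$ operator norm at most $\sup_M|f|$, one has
\[
\|T^{(k)}_{f,\s} s\|^2 = \|\pi^{(k)}_\s(fs)\|^2 \leq \|fs\|^2 = \frac{1}{m!}\int_M |f|^2 (s,s)\,\omega^m \leq \Bigl(\sup_M|f|\Bigr)^2,
\]
so $\|T^{(k)}_f\| \leq \sup_M|f|$ uniformly in $k$, and in particular $\limsup_{k\to\infty}\|T^{(k)}_f\| \leq \sup_M|f|$.

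For the matching lower bound I would fix $\varepsilon > 0$, choose $x_0 \in M$ with $|f(x_0)| \geq \sup_M|f| - \varepsilon$, and exhibit a sequence of unit-norm holomorphic sections $s_k \in H^0(M_\s,\cL^k_\s)$ peaked at $x_0$, in the sense that the probability measures $\mu_k = (s_k,s_k)\,\omega^m/m!$ converge weakly to the Dirac mass at $x_0$. The natural candidate is the normalized coherent state $s_k = B_k(\cdot,x_0)/\sqrt{B_k(x_0,x_0)}$, where $B_k$ denotes the Bergman reproducing kernel of $H^0(M_\s,\cL^k_\s)$. Granted this concentration, continuity of $f$ gives
\[
\langle T^{(k)}_{f,\s} s_k, s_k\rangle = \int_M f\, d\mu_k \longrightarrow f(x_0),
\]
whence $\|T^{(k)}_f\| \geq |\langle T^{(k)}_f s_k, s_k\rangle| \to |f(x_0)| \geq \sup_M|f| - \varepsilon$. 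Letting $\varepsilon \downarrow 0$ completes the argument.

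The main technical obstacle is precisely the concentration statement for the coherent states, which rests on the Bergman kernel asymptotics of Tian, Zelditch and Catlin: on the diagonal $B_k(x,x) = k^m/\pi^m + O(k^{m-1})$, and off the diagonal $|B_k(x,y)|$ decays rapidly as soon as $x$ leaves a fixed neighborhood of $y$. From these two inputs one controls both the normalization of $s_k$ and the weak convergence $\mu_k \to \delta_{x_0}$. Alternatively, following the original approach of Bordemann, Meinrenken and Schlichenmaier, one can lift $T^{(k)}_f$ to a generalized Toeplitz operator on the unit circle bundle of $\cL^*$ in the sense of Boutet de Monvel and Guillemin, whose principal symbol is essentially $f$, and read the norm estimate off directly from the symbolic calculus.
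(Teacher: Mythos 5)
The paper offers no proof of this statement at all: it is imported wholesale from Bordemann, Meinrenken and Schlichenmaier \cite{BMS} as one of the two external Toeplitz inputs (alongside Theorem \ref{S}), so there is no internal argument to compare yours against. Your proposal is nevertheless correct and is the now-standard ``peak section'' proof, which differs from the original route in \cite{BMS}. The upper bound is exactly as you say. For the lower bound, the identity $\langle T^{(k)}_{f,\s}s_k,s_k\rangle=\int_M f\,d\mu_k$ is legitimate because $s_k$ is holomorphic, so the projection drops out of the pairing, and the whole weight of the argument then rests on the concentration $\mu_k\to\delta_{x_0}$. That concentration does follow from the two kernel facts you cite: the reproducing property makes $\mu_k$ a probability measure, the off-diagonal $O(k^{-\infty})$ decay together with $B_k(x_0,x_0)\geq ck^m$ forces the mass outside any fixed neighbourhood of $x_0$ to vanish as $k\to\infty$, and continuity of $f$ then justifies $\int_M f\,d\mu_k\to f(x_0)$. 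What this route buys is a transparent, quantitative argument localized at a single point; what it costs is the Tian--Zelditch--Catlin diagonal expansion and the off-diagonal decay estimates, which are themselves nontrivial. The original approach via Boutet de Monvel--Guillemin generalized Toeplitz structures on the circle bundle of $\cL^*$, which you correctly identify as the one actually taken in \cite{BMS}, trades those inputs for the symbolic calculus of Hermite operators. Either is acceptable; the only point to nail down carefully in a full write-up is the uniformity in the weak convergence $\mu_k\to\delta_{x_0}$, which your cited Bergman kernel estimates do provide.
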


Since the association of the sequence of Toeplitz operators
$T^{(k)}_f$, $k\in \Z_+$ is linear in $f$, we see from this Theorem,
that this association is faithful.

The product of two Toeplitz operators associated to two smooth
functions will in general not be a Toeplitz operator associated to a
smooth function again. But by Schlichenmaier \cite{Sch}, there is an
asymptotic expansion of the product in terms of Toeplitz operators
associated to smooth functions on a compact K\"{a}hler manifold.

\begin{theorem}[Schlichenmaier]\label{S}
For any pair of smooth functions $f_1, f_2\in \C^\infty(M)$, we
have an asymptotic expansion
\[T_{f_1}^{(k)}T_{f_2}^{(k)} \sim \sum_{l=0}^\infty T_{c_l(f_1,f_2)}^{(k)} k^{-l},\]
where $c_l(f_1,f_2) \in C^\infty(M)$ are uniquely determined since
$\sim$ means the following: For all $L\in \Z_+$ we have that
\begin{equation}
\|T_{f_1}^{(k)}T_{f_2}^{(k)} - \sum_{l=0}^L T_{c_l(f_1,f_2)}^{(k)} k^{-l}\| =
O(k^{-(L+1)}) \label{normasympToep}
\end{equation}
uniformly over compact subsets of $\T$. Moreover, $c_0(f_1,f_2) = f_1f_2$.
\end{theorem}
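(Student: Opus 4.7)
The plan is to realize the whole family of Toeplitz operators $T^{(k)}_{f,\s}$ as the Fourier modes, in the $S^1$-weight, of a single generalized Toeplitz operator on the unit circle bundle of $\cL^{-1}_\s$, and then to apply the symbolic calculus of Boutet de Monvel--Guillemin for such operators. Concretely, I would fix $\s \in \T$ and let $X_\s$ denote the unit circle bundle inside $\cL^{-1}_\s$, a principal $S^1$-bundle over $M$. Sections of $\cL^k_\s$ lift to $S^1$-equivariant functions on $X_\s$ of weight $k$, so $L^2(X_\s)$ splits as a Hilbert sum of the spaces $\cHk_\s$, and the Szeg\H{o} projector $\Pi_\s$ onto the Hardy space of boundary values of holomorphic functions on the disk bundle restricts to $\pik_\s$ on each weight-$k$ summand. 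Pulling $f \in C^\infty(M)$ back to $X_\s$ as $\tilde f$, the operator $\Pi_\s \tilde f \Pi_\s$ is a generalized Toeplitz operator in the sense of Boutet de Monvel--Guillemin whose weight-$k$ restriction is precisely $T^{(k)}_{f,\s}$, with $1/k$ serving as the semiclassical parameter.

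Next I would invoke the key structural facts from that calculus: generalized Toeplitz operators form an algebra closed under composition, each such operator has a well-defined principal symbol living on the symplectic cone generated by the contact form on $X_\s$, and this symbol is multiplicative with respect to composition. Applied to $\Pi_\s \tilde{f_1}\Pi_\s \tilde{f_2}\Pi_\s$, this immediately gives that the composition is again a generalized Toeplitz operator with leading symbol $f_1 f_2$. Combined with Theorem~\ref{BMS1}, which identifies the leading symbol with the uniform norm asymptotics on the weight-$k$ summands, this yields $c_0(f_1,f_2)=f_1 f_2$ and the case $L=0$ of \eqref{normasympToep}.

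For the higher order terms I would argue by induction on $L$. Assume $c_0,\dots,c_{L-1}$ have been produced so that
\[
R^{(k)}_L := T^{(k)}_{f_1}T^{(k)}_{f_2} - \sum_{l=0}^{L-1} T^{(k)}_{c_l(f_1,f_2)} k^{-l}
\]
has norm $O(k^{-L})$ on compacta in $\T$. Then $k^L R^{(k)}_L$ is the weight-$k$ component of a generalized Toeplitz operator of lower order, and the essential fact from the Boutet de Monvel--Guillemin machinery is that any such operator agrees, modulo a further drop in order, with $\Pi_\s \widetilde{g} \Pi_\s$ for a smooth function $g \in C^\infty(M)$ uniquely determined by its symbol on the cone. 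Setting $c_L(f_1,f_2) := g$ and iterating builds the full asymptotic expansion. Uniqueness of every $c_l$ follows from Theorem~\ref{BMS1}: if two choices differed by $g$, then $T^{(k)}_g = O(k^{-1})$, forcing $\sup_M|g|=0$.

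Finally, the uniformity in $\s$ over compact subsets of $\T$ follows from the smooth dependence of $X_\s$, $\Pi_\s$ and the entire Boutet de Monvel--Guillemin symbol calculus on $\s$; the operator norm remainder at each inductive step is controlled by finitely many $\s$-derivatives of these objects, which are bounded on compacta. The main obstacle I anticipate is precisely the extraction step in the induction: one must show that the correction needed at each order is of the form $T^{(k)}_g$ for some genuine $g \in C^\infty(M)$, rather than merely a generalized Toeplitz operator with a symbol defined on the contact cone. This is exactly where the specific structure of the cone as the symplectization of $M$ is used, allowing homogeneous symbols of the right degree to be identified with smooth functions on $M$, and it is the heart of the proof in \cite{Sch}.
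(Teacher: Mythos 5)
The paper does not prove this theorem itself but quotes it from Schlichenmaier \cite{Sch}, whose argument (building on \cite{BMS}) is precisely the Boutet de Monvel--Guillemin calculus of generalized Toeplitz operators on the unit circle bundle of $\cL^{-1}_\s$ that you describe. Your sketch is a correct outline of that proof, and you rightly isolate the delicate step: showing at each order that the $S^1$-invariant remainder has a homogeneous symbol on the cone which descends to an honest function on $M$, so the correction term is again of the form $T^{(k)}_g$.
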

\begin{remark} \label{rem:1} In Section~\ref{sec:form-hitch-conn} it will be useful for us to define new
  coefficients ${\tilde c}_\s^{(l)}(f,g) \in C^\infty(M)$ which
  correspond to the expansion of the product in $1/(k+n/2)$ (where $n$
  is some fixed integer):
  \[T_{f_1,\s}^{(k)}T_{f_2,\s}^{(k)} \sim \sum_{l=0}^\infty T_{{\tilde
      c}_\s^{(l)}(f_1,f_2),\s}^{(k)} (k+n/2)^{-l}.\] Note that the first three coefficients are given by
  $\tilde c^{(0)}_\sigma(f_1,f_2) = c^{(0)}_\sigma(f_1,f_2)$,\\ $\tilde
  c^{(1)}_\sigma(f_1,f_2) = c^{(1)}_\sigma(f_1,f_2)$ and $\tilde
  c^{(2)}_\sigma(f_1,f_2) = c^{(2)}_\sigma(f_1,f_2) + \frac{n}{2}
  c^{(1)}_\sigma(f_1,f_2)$.
\end{remark}

This Theorem was proved in \cite{Sch} where it
is also proved that the formal
generating series for the $c_l(f_1,f_2)$'s gives a formal
deformation quantization of the Poisson structure on $M$ induced by
$\omega$. An English version is available in \cite{Sch1} see
\cite{Sch2} for further developments. We return to this in Section~\ref{sec:form-hitch-conn} where we
discuss formal Hitchin connections. 

\section{Asymptotic faithfulness}
\label{sec:asympt-faithf}

In this section we will concentrate on the case where $M$ is the
moduli space of flat $\SU(n)$-connections on $\Sigma-p$ with
holonomy $d$ around $p$. As in the introduction $\Sigma$ is a closed
oriented surface of genus $g \geq 2$, $p$ a point in $\Sigma$ and $d
\in \Z/n\Z \simeq Z_{\SU(n)}$ in the center of $\SU(n)$ is fixed.

As mentioned in the introduction the main result about the
Verlinde bundle $\V^{(k)}$ from geometrically quantizing the moduli
space $M$ is that its projectivization $\P(\V^{(k)})$ carries a flat
connection $\Nabla$. This flat connection induces a flat connection in
$\Nablae$ in the endomorphism bundle $\End(\V^{(k)})$ as described in
Section~\ref{ghc}.

An important ingredient in proving asymptotic faithfulness is the
corollary to Theorem~\ref{MainFGHCI2} saying that Toeplitz operators
viewed as a section of $\End(\V^{(k)})$ is in
some sense asymptotically flat,
\[
\norm{\Nablae_VT^{(k)}_f} = O(k^{-1}).
\]
This can be reformulated in terms of the induced parallel transport
between the fibers of $\End(\V^{(k)})$. Let $\s_0,\s_1$ be two points
in Teichm\"{u}ller space $\T$, and $P_{\s_0,\s_1}$ the parallel transport
from $\s_0$ to $\s_1$. Then 
\begin{equation} \label{Asympflat}
\norm{P_{\s_0,\s_1}T^{(k)}_{f,\s_0} - T^{(k)}_{f,\s_1}} = O(k^{-1}),
\end{equation}
where $\norm{\cdot}$ is the operator norm on $H^0(M_{\s_1},\cL^k_{\s_1})$.

Equation~\eqref{Asympflat} and 
Theorem~\ref{BMS1} together prove asymptotic faithfulness. Below we
explain how.

Recall that the flat connection in the bundle $\P(\V^{(k)})$ gives the projective representation of the mapping
class group
\[
Z^{(n,d)}_k: \Gamma \to \Aut(\P(V^k)) 
\]
where $\P(V^{(k)})$ are the covariant constant sections of
$\P(\V^{(k)})$ over Teichm\"{u}ller space with
respect to the Hitchin connection $\Nabla$.

\begin{proof}[Proof of Theorem~\ref{MainA3}]
  Suppose we have a $\phi \in \Gamma$. Then $\phi$ induces a symplectomorphism of $M$
which we also just denote $\phi$ and we get the following commutative
diagram for any $f\in \C^\infty(M)$
\begin{equation*}
\begin{CD}
H^0(M_\sigma,\L_{\s}^k) @>\phi^*>> H^0(M_{\phi(\sigma)},\L_{\phi(\sigma)}^k)
@> P_{\phi(\sigma),\sigma}>> H^0(M_\sigma,\L_{\s}^k)\\
@V T^{(k)}_{f,\sigma} VV @V T^{(k)}_{f \circ \phi,\phi(\sigma)} VV
@VV{P_{\phi(\sigma),\sigma}T^{(k)}_{f \circ \phi,\phi(\sigma)}}V\\
H^0(M_\sigma,\L_{\s}^k) @>\phi^*>> H^0(M_{\phi(\sigma)},\L_{\phi(\sigma)}^k)
@> P_{\phi(\sigma),\sigma}>> H^0(M_\sigma,\L_{\s}^k),
\end{CD}
\end{equation*}
where $P_{\phi(\sigma),\sigma} : H^0(M_{\phi(\sigma)},\L_{\phi(\sigma)}^k) \ra H^0(M_{\sigma},\L_{\s}^k)$
on the horizontal arrows refer to parallel transport in the Verlinde
bundle itself, whereas  $P_{\phi(\sigma),\sigma}$  refers to the parallel transport in
the endomorphism bundle $\End(\V_k)$ in the last vertical arrow.
Suppose now $\phi \in \bigcap_{k=1}^\infty \ker
Z^{(n,d)}_k$, then $P_{\phi(\sigma),\sigma} \circ \phi^* = Z^{(n,d)}_k(\phi) \in
\bC \id$ and we get that $T^{(k)}_{f,\sigma} = P_{\phi(\sigma),\sigma}
T^{(k)}_{f\circ \phi,\phi(\sigma)}$. By Theorem \ref{Asympflat} we get
that
\[
\begin{split}
\lim_{k\ra \infty}\|T_{f - f \circ \phi,\sigma}^{(k)}\| &=
\lim_{k\ra \infty}\|T_{f,\sigma}^{(k)} - T_{f\circ \phi,\sigma}^{(k)}\|\\
& =  \lim_{k\ra \infty}\| P_{\phi(\sigma),\sigma}
T^{(k)}_{f\circ \phi,\phi(\sigma)} - T_{f\circ \phi,\sigma}^{(k)} \| = 0.
\end{split}\]
By Bordemann, Meinrenken and Schlichenmaier's Theorem \ref{BMS1},
we must have that $f = f \circ \phi$. Since this holds for any
$f\in \C^\infty(M)$, we must have that $\phi$ acts by the identity
on $M$.
\end{proof}

\section{The Formal Hitchin connection and Berezin--Toeplitz
  Deformation Quantization}
\label{sec:form-hitch-conn}

In this section we discuss the formal Hitchin connection. We return to
the general setup of compact K\"{a}hler manifolds, where we impose
conditions on $(M,\omega,I)$ as in Theorem~\ref{HCE}, thus
providing us with a Hitchin connection $\Nabla$ in $H^{(k)}$ over $\T$
and the associated connection $\Nablae$ in $\End(H^{(k)})$. Firstly werecall the definition of a formal deformation quantization and
the results about star products from \cite{Sch} and
\cite{KS}. We introduce the space of formal functions
$C^\infty_h(M) = C^\infty(M)\formal{h}$ as the space for
formal power series in the variable $h$ with coefficients in
$C^\infty(M)$, and let $\bC_h = \bC\formal{h}$ denote the formal constants.

\begin{definition}
  A deformation quantization of $(M,\omega)$ is an associative product
  $\star$ on $C^\infty_h(M)$ which respects the $\bC_h$-module
  structure.  For $f,g \in C^\infty(M)$, it is defined as
  \[f \star g = \sum_{l=0}^\infty c^{(l)}(f,g) h^{l},\] through a
  sequence of bilinear operators
$$c^{(l)} : C^\infty(M) \otimes C^\infty(M) \ra C^\infty(M),$$
which musth satisfy
\begin{align*}
  c^{(0)}(f, g) = fg \qquad \text{and} \qquad c^{(1)}(f, g) -
  c^{(1)}(g,f) = -i \{f, g\}.
\end{align*}
The deformation quantization is said to be \emph{differential} if the
operators $c^{(l)}$ are bidifferential operators. Considering the
symplectic action of $\Gamma$ on $(M,\omega)$, we say that a star
product is $\Gamma$-invariant if
$$ \gamma^*(f \star g) = \gamma^*(f) \star \gamma^*(g)$$
for all $f,g\in C^\infty(M)$ and all $\gamma\in\Gamma$.
\end{definition}

Recall Theorem~\ref{S} where the asymptotic expansion of the product
of two Toeplitz operators associated to smooth functions $f_1,f_2$ on
$M$ create maps $c_i(f_1,f_2) \in C^\infty(M)$. In \cite{Sch}
Schlichenmaier also showed that these maps generate a star product.
It was first in \cite{KS} Karabegov and Schlichenmaier showed that it
was a differentiable star product.

\begin{theorem}[Karabegov \& Schlichenmaier]\label{tKS1}
  The product $\BTstar_\s$ given by
  \[f \BTstar_\s g = \sum_{l=0}^\infty c_\s^{(l)}(f,g) h^{l},\] where
  $f,g \in C^\infty(M)$ and $c_\s^{(l)}(f,g)$ are determined by
  Theorem \ref{S}, is a differentiable deformation quantization of
  $(M,\omega)$.
\end{theorem}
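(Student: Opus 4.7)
The plan is to verify the three conditions in the definition of a differentiable deformation quantization for the coefficients $c^{(l)}_\s$ produced by Theorem~\ref{S}: associativity, the correct zeroth and antisymmetric first-order terms, and bidifferentiality. The first two conditions follow fairly directly from Theorems~\ref{BMS1} and~\ref{S}; the third (bidifferentiality) is the hard part and is the actual content of the Karabegov--Schlichenmaier refinement.

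First I would establish associativity. Because operator composition is associative, $(T^{(k)}_f T^{(k)}_g) T^{(k)}_h = T^{(k)}_f (T^{(k)}_g T^{(k)}_h)$ holds for every $k$. Expanding both sides twice using Theorem~\ref{S}, one obtains two asymptotic expansions of the same operator in terms of Toeplitz operators with coefficients built from the $c^{(l)}_\s$. The key observation is that the coefficients in such an expansion are unique: if $\sum_{l=0}^L T^{(k)}_{a_l} k^{-l}$ has operator norm $O(k^{-(L+1)})$, then by Theorem~\ref{BMS1} we get $\sup|a_0|=0$, hence $a_0=0$, and one proceeds inductively. Matching coefficients at each order in $1/k$ yields the associativity identity $\sum_{i+j=l} c^{(i)}_\s(c^{(j)}_\s(f,g),h) = \sum_{i+j=l} c^{(i)}_\s(f,c^{(j)}_\s(g,h))$, which is precisely associativity of $\BTstar_\s$ at order $l$.

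Next the zeroth-order identity $c^{(0)}_\s(f,g)=fg$ is supplied directly by Theorem~\ref{S}. For the first-order antisymmetric part, I would apply the asymptotic expansion to the commutator $[T^{(k)}_f, T^{(k)}_g]$. Applying Theorem~\ref{S} to both $T^{(k)}_f T^{(k)}_g$ and $T^{(k)}_g T^{(k)}_f$ and subtracting gives
\[\|[T^{(k)}_f, T^{(k)}_g] - (c^{(1)}_\s(f,g)-c^{(1)}_\s(g,f))\,k^{-1}\mathrm{Id}\|_{\mathrm{Toep}} = O(k^{-2}).\]
On the other hand, the Bordemann--Meinrenken--Schlichenmaier commutator estimate $\|ik[T^{(k)}_f,T^{(k)}_g] - T^{(k)}_{\{f,g\}}\| = O(k^{-1})$ (which is proved in the same paper as Theorem~\ref{BMS1}) together with the uniqueness argument above forces $c^{(1)}_\s(f,g)-c^{(1)}_\s(g,f) = -i\{f,g\}$.

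The main obstacle, and the genuine content of the theorem, is showing that each $c^{(l)}_\s$ is a bidifferential operator. Here I would follow Karabegov's strategy of classifying star products with separation of variables on a K\"ahler manifold. One verifies, using the characteristic property of the Toeplitz projection $\pi^{(k)}_\s$ on holomorphic sections, that $T^{(k)}_f T^{(k)}_g = T^{(k)}_{fg}$ whenever $f$ is antiholomorphic or $g$ is holomorphic on a coordinate chart, which translates into the identities $f \BTstar_\s g = fg$ in these cases. This means $\BTstar_\s$ (or rather its opposite) is a star product with separation of variables in Karabegov's sense, and Karabegov's classification attaches to it a unique formal deformation of the K\"ahler form, from which a Fedosov-type recursion produces bidifferential cochains representing $\BTstar_\s$. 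Uniqueness of the asymptotic expansion in Theorem~\ref{S} then identifies these bidifferential cochains with the $c^{(l)}_\s$. Alternatively, bidifferentiality can be read off directly from the off-diagonal asymptotics of the Bergman kernel of $\cL^k_\s$, which express $\pi^{(k)}_\s(fg s)$ as a formal series in $k^{-1}$ whose coefficients are differential operators applied to $f$ and $g$ evaluated against $s$; this is the route actually taken in~\cite{KS}.
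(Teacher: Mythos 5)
The paper does not actually prove Theorem~\ref{tKS1}; it is stated as a quoted result, with the existence of the star product attributed to \cite{Sch} and its differentiability to \cite{KS}, so there is no in-paper argument to compare against. Your outline is a faithful reconstruction of the strategy of those references: associativity via associativity of operator composition plus uniqueness of the coefficients in the asymptotic expansion (which you correctly derive from Theorem~\ref{BMS1} by the inductive $\sup$-norm argument), the zeroth-order term directly from Theorem~\ref{S}, the first-order antisymmetry from the Bordemann--Meinrenken--Schlichenmaier commutator estimate, and bidifferentiality -- correctly identified as the genuine content of \cite{KS} -- via the Berezin transform, Bergman kernel asymptotics and Karabegov's classification of star products with separation of variables.

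One point deserves more care than your sketch gives it: on a compact K\"ahler manifold the only global holomorphic (or antiholomorphic) functions are constants, so the identities $f \BTstar_\s g = fg$ for $f$ antiholomorphic or $g$ holomorphic are vacuous as global statements; they only acquire content through the local/formal framework of Karabegov's theory and the off-diagonal expansion of the Bergman kernel. You do flag the Bergman-kernel route as the one actually taken in \cite{KS}, so this is a presentational gap rather than a mathematical one. A second minor point: in your displayed commutator estimate the quantity subtracted should be the Toeplitz operator $T^{(k)}_{c^{(1)}_\s(f,g)-c^{(1)}_\s(g,f)}k^{-1}$ rather than the function times the identity; as written the uniqueness argument via Theorem~\ref{BMS1} does not literally apply to it.
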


\begin{definition}
  The Berezin-Toeplitz deformation quantization of the compact
  K{\"a}hler manifold $(M_\s,\omega)$ is the product $\BTstar_\s$.
\end{definition}

For the remaining part of this paper we let $\Gamma$ be a symmetry
group as in Section~\ref{ghc}, that is a group which acts by bundle
automorphisms on $\cL$ over $M$ preserving both the Hermitian
structure and the connection in $\cL$. Such a group has an induced
action on $(M,\omega)$. Note that $\Gamma$ in the case of moduli
spaces is the mapping class group of the surface.

\begin{remark} Let $\Gamma_\s$ be the $\s$-stabilizer subgroup of
  $\Gamma$.  For any element $\gamma\in \Gamma_\s$, we have that
  \[\gamma^*(T^{(k)}_{f,\s}) = T^{(k)}_{\gamma^*f,\s}.\]
  This implies the invariance of $\BTstar_\s$ under the
  $\s$-stabilizer $\Gamma_\s$.
\end{remark}

\begin{remark} Using the coefficients from Remark \ref{rem:1}, we
  define a new star product by
  \[f\tBTstar_\s g = \sum_{l=0}^\infty {\tilde c}_\s^{(l)}(f,g)
  h^{l}.\] Then
  \[f\tBTstar_\s g = \left( (f\circ \phi^{-1})\BTstar_\s (g\circ
    \phi^{-1})\right) \circ \phi\] for all $f,g\in C_h^\infty(M)$,
  where $\phi(h) = \frac{2h}{2 + nh}$.
\end{remark}

Recall from the introduction the definition of a formal connection in
the trivial bundle of formal functions. Theorem \ref{MainFGHCI2},
establishes the existence of a unique formal Hitchin connection,
expressing asymptotically the interplay between the Hitchin connection
and the Toeplitz operators.

We want to give an explicit formula for the formal Hitchin connection
in terms of the star product $\tilde \star^{BT}$. We recall that in
the proof of Theorem \ref{MainFGHCI2}, given in \cite{A5}, it is shown
that the formal Hitchin connection is given by
\begin{equation}
  \tD(V)(f) =  - V[F]f + V[F]\tBTstar f + h (E(V)(f) -
  H(V) \tBTstar f), \label{formalcon}
\end{equation}
where $E$ is the one-form on $\T$ with values in $\D(M)$ such that
\begin{align}
  \label{eq:2}
  T^{(k)}_{E(V)f} = \pi^{(k)} o(V)^* f \pi^{(k)} + \pi^{(k)} f o(V)
  \pi^{(k)},
\end{align}
and $H$ is the one form on $\T$ with values in $C^\infty(M)$ such that
$H(V) = E(V)(1)$. In \cite{AG} an explicit expression for the
operator $E(V)$ is found by calculating the adjoint of
\[
o(V) = -\frac{1}{4}(\Delta_{G(V)}+ 2 \Nabla_{G(V)\cdot dF} - 2 nV'[F]).
\]
This operator is essential in the proof of Theorem~\ref{HCE} since by
comparing the above equation with Equation~\ref{eq_hitchin:3} we see
that $u(V) = \frac{1}{k + n/2}o(V) - V'[F]$. 

\begin{theorem}
  \label{thm:1}
  The formal Hitchin connection is given by
  \begin{align*}
    D_V f &= V[f] - \frac{1}{4}h \Delta_{\tilde G(V)}(f) +
    \frac{1}{2}h \nabla_{\tilde G(V)dF}(f) + V[F] \tBTstar f - V[F] f
    \\ & \quad - \frac{1}{2} h(\Delta_{\tilde G(V)}(F) \tBTstar f +
    nV[F] \tBTstar f - \Delta_{\tilde G(V)}(F) f - n V[F]f)
  \end{align*}
  for any vector field $V$ and any section $f$ of $C_h$.
\end{theorem}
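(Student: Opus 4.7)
The strategy is to start from formula~(\ref{formalcon}) for $\tD(V)(f)$ and compute $E(V)$ and $H(V)$ explicitly; since $D_V f = V[f] + \tD(V)(f)$, the claimed identity then follows by direct substitution. Given the expected form of the answer, I would try to verify that
\[
E(V)f = -\tfrac14 \Delta_{\tilde G(V)}(f) + \tfrac12 \nabla_{\tilde G(V) \cdot dF}(f) + \tfrac12\bigl(\Delta_{\tilde G(V)}(F) + nV[F]\bigr) f,
\]
from which $H(V) = E(V)(1) = \tfrac12(\Delta_{\tilde G(V)}(F) + nV[F])$ automatically, since the first two summands in $E(V)f$ annihilate constants.

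To check the formula for $E(V)f$, I would use the defining relation~(\ref{eq:2}),
\[
T^{(k)}_{E(V)f} = \pi^{(k)} o(V)^* f \pi^{(k)} + \pi^{(k)} f\, o(V) \pi^{(k)},
\]
together with the explicit expression $o(V) = -\tfrac14(\Delta_{G(V)} + 2\nabla_{G(V)\cdot dF} - 2nV'[F])$. For the second summand, the plan is to apply $o(V)$ to a holomorphic section $s$, expand $f \cdot o(V)s$ via the Leibniz rule, and use that for any smooth $g$ one has $\pi^{(k)}(g s) = T^{(k)}_g s$ on $s \in H^0(M_\s,\cL^k_\s)$; this produces the pieces contributing $G(V)$. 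For the first summand, I would compute $o(V)^*$ via integration by parts against the $L_2$-pairing on $C^\infty(M,\cL^k)$. Because $G(V)$ is of type $(2,0)$, the adjoint naturally replaces it by $\bar G(V)$, and the analogous Leibniz expansion produces the pieces contributing $\bar G(V)$. Summing the two contributions assembles the symmetric combination $\tilde G(V) = G(V) + \bar G(V)$ in each place it appears in the target formula.

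Once $E(V)$ and $H(V)$ are in hand, I substitute back into~(\ref{formalcon}) and add $V[f]$ to get $D_V f$. The resulting expression matches the theorem after grouping the $\tBTstar$-terms and the pure-multiplication terms.

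The main obstacle is the adjoint computation in the first summand: one must carry integration by parts past both $\nabla^2_{G(V)}$ and $\nabla_{\delta G(V)}$, control the curvature terms produced by commuting covariant derivatives, and check that all lower-order contributions reassemble into exactly the coefficients of $\nabla_{\tilde G(V)\cdot dF}$, $\Delta_{\tilde G(V)}(F)$ and $nV[F]$ predicted above, without leaving stray Ricci-type terms. This is essentially the adjoint computation announced in~\cite{AG}; once available, the remainder of the argument is purely algebraic rearrangement.
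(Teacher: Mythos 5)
Your proposal follows essentially the same route as the paper: the theorem is obtained by substituting into~\eqref{formalcon} the explicit formula for $E(V)$ (and hence $H(V)=E(V)(1)$), which is derived exactly as you describe --- by computing the adjoint of $o(V)$ and assembling the $G(V)$ and $\bar G(V)$ contributions into $\tilde G(V)$ --- with the adjoint computation itself deferred to \cite{AG}, just as in your sketch. Your candidate expression for $E(V)f$ is the correct one and is consistent with $H(V)=\tfrac12(\Delta_{\tilde G(V)}(F)+nV[F])$, so the remaining rearrangement goes through as you indicate.
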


% The next lemma is proved in \cite{A5}, and it follows basically
% from the fact that
% \[\Nablae_V (T^{(k)}_{f}T^{(k)}_{g}) =
% \Nablae_V (T^{(k)}_{f})T^{(k)}_{g} + T^{(k)}_{f}\Nablae_V
% (T^{(k)}_{g}). \] We have

% \begin{lemma}\label{Deriv}
%   The formal operator $D_V$ is a derivation for $\tBTstar_\sigma$ for
%   each $\s\in\T$, i.e.
%   \[D_V(f\tBTstar g) = D_V(f)\tBTstar g + f\tBTstar D_V(g)\] for all
%   $f,g\in C^\infty(M)$.
% \end{lemma}

When we geometrically quantize a symplectic manifold, we have to
choose a polarization of the complexified tangent bundle, to reduce
the space upon the quantum operators act. This is equivalent to
choosing a compatible complex structure on the symplectic manifold,
hence making it K\"{a}hler. It is however quite unfortunate that the
quantum space then depend on the choice of K\"{a}hler structure. The
solution to this is the projectively flat Hitchin Connection, which by
parallel transport between the fibers of $\Hk$ give us a space of
quantum states as the covariant constant sections of $\P\Hk$, which does not depend on the chosen complex
structure. Instead of doing geometric quantization we could do
Berezin--Toeplitz deformation quantization. The created star product
$\BTstar_\s$ depend on the complex structure, and in the same spirit
as above we want to make all these star products equivalent to a star
product which does not depend on $\s$. This is the purpose of the
formal Hitchin connection.

If the Hitchin connection is projectively flat, then the induced
connection in the endomorphism bundle is flat and hence so is the
formal Hitchin connection by Proposition 3 of \cite{A5}.

Recall from Definition \ref{formaltrivi2} in the introduction the
definition of a formal trivialization. As mentioned there, such a
formal trivialization will not exist even locally on $\T$, if $D$ is
not flat. However, if $D$ is flat, then we have the following result
from \cite{A5}.

\begin{proposition}\label{Dfc} 
  Assume that $D$ is flat and that $\tD = 0$ mod $h$. Then locally
  around any point in $\T$, there exists a formal trivialization. If
  $H^1(\T,\bR) = 0$, then there exists a formal trivialization defined
  globally on $\T$. If further $H^1_\Gamma(\T,D(M)) = 0$, then we can
  construct $P$ such that it is $\Gamma$-equivariant.
\end{proposition}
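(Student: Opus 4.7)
The plan is to construct $P = \sum_{l \geq 0} P^{(l)} h^l \in \D_h(M)$ recursively in $l$, starting from $P^{(0)} = \id$. Writing $\tD(V) = \sum_{l \geq 1} \tD^{(l)}(V) h^l$ (using the assumption $\tD = 0 \bmod h$), the trivialization equation $D_V(P(f)) = 0$ for all $f \in C^\infty_h(M)$ is equivalent to the pointwise operator identity $V[P] = -\tD(V) \circ P$ on $\T$, which at order $h^L$ for $L \geq 1$ reads
\[
V[P^{(L)}] = \alpha^{(L)}(V) := -\sum_{l=1}^{L} \tD^{(l)}(V) \circ P^{(L-l)}.
\]
The right-hand side depends only on $P^{(0)}, \ldots, P^{(L-1)}$, so at each stage of the induction the task reduces to solving $dP^{(L)} = \alpha^{(L)}$ for a $\D(M)$-valued function on $\T$, where $\alpha^{(L)}$ is a $\D(M)$-valued 1-form.

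The heart of the argument is to verify that $\alpha^{(L)}$ is closed. Assuming inductively that $V[P^{(m)}] = \alpha^{(m)}(V)$ for $m < L$, I would compute $(d\alpha^{(L)})(V,W) = V[\alpha^{(L)}(W)] - W[\alpha^{(L)}(V)] - \alpha^{(L)}([V,W])$, using the induction hypothesis to eliminate the derivatives of $P^{(m)}$. Simultaneously, the flatness $[D_V, D_W] = D_{[V,W]}$ of $D$ gives, order by order in $h$, the Maurer--Cartan-type relation
\[
V[\tD^{(N)}(W)] - W[\tD^{(N)}(V)] - \tD^{(N)}([V,W]) + \sum_{\substack{j+k=N \\ j,k \geq 1}} [\tD^{(j)}(V), \tD^{(k)}(W)] = 0
\]
for each $N \geq 1$. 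Substituting this for $N = 1, \ldots, L$ absorbs the linear terms in $d\alpha^{(L)}$ and leaves only a sum of double commutators $[\tD^{(j)}(V), \tD^{(k)}(W)] P^{(L-j-k)}$ together with $[\tD^{(j)}(W), \tD^{(k)}(V)] P^{(L-j-k)}$ indexed by $j + k \leq L$ with $j,k \geq 1$. These cancel in pairs by the antisymmetry $[\tD^{(j)}(V), \tD^{(k)}(W)] = -[\tD^{(k)}(W), \tD^{(j)}(V)]$ after relabelling $(j,k) \mapsto (k,j)$ in one of the sums.

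Once $\alpha^{(L)}$ is closed, the local statement follows from the Poincar\'e lemma for $\D(M)$-valued forms on a contractible open subset of $\T$. Globally, the hypothesis $H^1(\T, \bR) = 0$ forces the vanishing of $H^1(\T, V)$ for any coefficient vector space $V$, since a closed $V$-valued 1-form has vanishing periods over all 1-cycles and hence is exact; taking $V = \D(M)$ yields a global primitive $P^{(L)}$. For the $\Gamma$-equivariant statement, the $\Gamma$-equivariance of the Hitchin connection (Lemma \ref{lem:3}) and of the Toeplitz construction propagates through Theorem \ref{MainFGHCI2} to $\Gamma$-equivariance of $D$, so $\alpha^{(L)}$ is a $\Gamma$-equivariant closed form whenever $P^{(0)}, \ldots, P^{(L-1)}$ are; the vanishing of $H^1_\Gamma(\T, \D(M))$ then supplies an equivariant primitive $P^{(L)}$.

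The principal obstacle is the bookkeeping in the closedness check: one must track how flatness at each order $N \leq L$ cancels the various derivatives of the $\tD^{(l)}(W)$ and the cross terms $\tD^{(l)}(W)\,\alpha^{(L-l)}(V)$, and the residual double commutators vanish only because the index set $\{(j,k) : j+k = L, \ j,k \geq 1\}$ is symmetric under swap. The rest is standard de Rham theory once the relevant cohomology groups vanish.
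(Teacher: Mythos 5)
Your proposal is correct, and it is essentially the argument of the reference \cite{A5} to which the paper defers for this proposition (the paper itself gives no proof): the recursive order-by-order construction $V[P^{(L)}]=-\sum_{l=1}^{L}\tD^{(l)}(V)P^{(L-l)}$, closedness of the right-hand side via the order-$h^N$ flatness identities combined with the induction hypothesis, and then the Poincar\'e lemma, the vanishing of $H^1(\T,\bR)$, and the vanishing of $H^1_\Gamma(\T,D(M))$ for the local, global, and equivariant statements respectively. The computation confirming $d\alpha^{(L)}=0$ works exactly as you indicate, with the commutator terms from the flatness relations cancelling the quadratic cross terms arising from differentiating the $P^{(L-l)}$.
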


An immediate corollary of Proposition~\ref{Dfc} is
\begin{corollary}
  \label{cl:2}
  If $\T$ is contractible, then any \emph{flat} formal connection admits
  a global formal trivialization that is $\Gamma$-equivariant.
\end{corollary}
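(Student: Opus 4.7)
The plan is to derive the corollary as an immediate application of Proposition~\ref{Dfc}. Since $\T$ is contractible, the Poincar\'{e} lemma yields $H^1(\T,\bR) = 0$, which by the second clause of that proposition already produces a globally defined (non-equivariant) formal trivialization. The full content of the corollary is then that, under the same hypothesis, the same construction can be pushed through $\Gamma$-equivariantly.

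More concretely, I would mimic the proof of Proposition~\ref{Dfc}. Write $P = \Id + \sum_{l\ge 1} h^l P_l$, substitute into $D_V(P(f)) = 0$, and match coefficients of $h^l$. At each order one obtains an equation of the form $dP_l = A_l$, where $A_l$ is a $D(M)$-valued 1-form on $\T$ assembled from $\tD$ and the already-constructed $P_1,\ldots,P_{l-1}$. Flatness of $D$ together with the inductive hypothesis forces $dA_l = 0$, so by contractibility of $\T$ one obtains a primitive $P_l$ defined globally on $\T$. The normalization $P \equiv \Id \bmod h$ is consistent with the condition $\tD = 0 \bmod h$, which gives $A_1 = -\tD^{(1)}$ as the starting datum.

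For the equivariance part, I would assume inductively that $P_1,\ldots,P_{l-1}$ are $\Gamma$-equivariant and that the formal connection $D$ is itself $\Gamma$-equivariant (as is automatic in the setting of Lemma~\ref{lem:3}). The 1-form $A_l$ is then $\Gamma$-equivariant, and the remaining task is to choose its primitive equivariantly. The main obstacle is precisely this step: contractibility of $\T$ alone does not supply an equivariant Poincar\'{e} lemma, as the elementary example of $\bZ$ acting on $\bR$ by translations illustrates, where the closed invariant form $dx$ admits no invariant primitive. What I would exploit is the specific equivariant-cohomology model used in the proof of Proposition~\ref{Dfc} in~\cite{A5}, under which contractibility of $\T$ forces $H^1_\Gamma(\T,D(M)) = 0$; concretely, one produces an equivariant primitive by correcting any primitive by a suitable element of $D(M)$ built from the obstruction cocycle $\gamma \mapsto \gamma^* P_l^{(0)} - P_l^{(0)}\!\circ\gamma$, using the contracting homotopy of $\T$ to control its $\sigma$-dependence. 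Once the vanishing of $H^1_\Gamma(\T,D(M))$ is established in this way, the third clause of Proposition~\ref{Dfc} applies verbatim and produces the asserted global $\Gamma$-equivariant formal trivialization.
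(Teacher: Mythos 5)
Your reduction of the non-equivariant part to Proposition~\ref{Dfc} is exactly what the paper intends: contractibility gives $H^1(\T,\bR)=0$, the second clause of the proposition then produces a global trivialization, and your order-by-order construction of primitives $P_l$ with $dP_l=A_l$, $dA_l=0$ forced by flatness, is the standard proof of that clause. Since the paper offers nothing beyond ``an immediate corollary of Proposition~\ref{Dfc}'', the entire content of the corollary lies in whether contractibility also delivers the hypothesis $H^1_\Gamma(\T,D(M))=0$ of the third clause.

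That is where your argument has a genuine gap, and you in fact diagnose it correctly before papering over it. You rightly observe that contractibility provides no equivariant Poincar\'e lemma (your example of $\bZ$ acting on $\bR$ by translations, with the invariant closed form $dx$, is a genuine counterexample), but you then assert that ``the specific equivariant-cohomology model used in \cite{A5}'' makes contractibility force $H^1_\Gamma(\T,D(M))=0$. It does not: for connected contractible $\T$ the obstruction cocycle $\gamma\mapsto \gamma^*P_l-P_l$ that you write down is constant in $\sigma$ (its differential is $\gamma^*A_l-A_l=0$) and so defines a class in the group cohomology $H^1(\Gamma,D(M))$; one gets $H^1_\Gamma(\T,D(M))\cong H^1(\Gamma,D(M))$, and a contracting homotopy of $\T$ --- which need not be $\Gamma$-equivariant --- gives no purchase on this class. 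The paper itself confirms that the vanishing is not a consequence of contractibility: Teichm\"uller space is contractible, yet the text immediately following Proposition~\ref{Dfc} cites \cite{AV1,AV2,AV3,Vi} for only ``the first steps'' towards proving $H^1_\Gamma(\T,D(M))=0$ in the moduli space case. So your proposed repair of the equivariance step fails as written; to justify Corollary~\ref{cl:2} as stated one needs either the additional input $H^1(\Gamma,D(M))=0$ or an argument that appears neither in your proposal nor, it should be said, in the paper.
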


In the proposition, $H^1_\Gamma(\T,D(M))$ refers to the
$\Gamma$-equivariant first \mbox{de Rham} cohomology of $\T$ with
coefficients in the real vector space $D(M)$ of differential
operators on $M$.
The first steps towards proving that this cohomology group
vanishes in the case where $M$ is the moduli space have been taken in \cite{AV1,AV2, AV3, Vi}. 

In \cite{AG} an explicit formula for $P$ up to first order is found. 

\begin{theorem}\label{Expliformula}
  The $\Gamma$-equivariant formal trivialization of the
  formal Hitchin connection exists to first order, and we have the
  following explicit formula for the first order term of $P$
$$P_\sigma^{(1)}(f) = \frac{1}{4} \Delta_\sigma(f) + i \nabla_{X''_F}(f),$$
where $X''_F$ denotes the (0,1)-part of the Hamiltonian vector field
for the Ricci potential, $F$.
\end{theorem}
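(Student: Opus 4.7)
The plan is to expand the defining condition $D_V(P(f)) = 0$ order by order in $h$ and verify that the candidate $P^{(1)}_\s = \tfrac{1}{4}\Delta_\s + i\nabla_{X''_F}$ produces the correct $h^1$-term. Writing $P = \mathrm{Id} + hP^{(1)} + O(h^2)$ and using $\tilde D = 0 \bmod h$, the condition $V[P(f)] + \tilde D(V)(P(f)) = 0$ reduces, for $\s$-independent $f \in C^\infty(M)$, to
\[
V[P^{(1)}_\s](f) = -\tilde D^{(1)}(V)(f).
\]
Reading off the $h^1$-coefficient from the explicit formula for $D_V$ in Theorem \ref{thm:1}, and noting that the final bracketed group of terms contributes only at order $h^2$ or higher since its parenthesized content is itself $O(h)$, one finds
\[
\tilde D^{(1)}(V)(f) = -\tfrac{1}{4}\Delta_{\tilde G(V)}(f) + \tfrac{1}{2}\nabla_{\tilde G(V) \cdot dF}(f) + \tilde c^{(1)}_\s(V[F], f),
\]
with the last summand coming from the expansion $V[F] \tBTstar f - V[F] f = h \tilde c^{(1)}_\s(V[F], f) + O(h^2)$.

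Next I would differentiate the candidate $P^{(1)}_\s$ along $V$. From $\pi^{0,1}_\s = \tfrac{1}{2}(\mathrm{Id} + iI_\s)$ one gets $V[\pi^{0,1}_\s] = \tfrac{i}{2}V[I]$, and since $\omega$ is $\s$-independent we have $V[X_{F_\s}] = X_{V[F]}$. Contracting the defining identity $\tilde G(V) \cdot \omega = V[I]$ against $X_F$ and using $\omega(X_F,\cdot) = dF$ yields $V[I] X_F = \tilde G(V) \cdot dF$, hence
\[
i V[\nabla_{X''_F}] f = -\tfrac{1}{2}\nabla_{\tilde G(V)\cdot dF}(f) + i \nabla_{X''_{V[F]}}(f).
\]
For the Laplacian term, the identity $V[g] = \omega \cdot \tilde G(V) \cdot \omega$ together with the rigidity hypothesis $\bar\partial_\s G(V) = 0$ from Definition \ref{rigid} allows one to express $V[\Delta_\s]$ in a form whose principal symbol is $\Delta_{\tilde G(V)}$ plus explicit first-order remainders built from the K\"ahler geometry and $F$.

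Putting these pieces together, the identity $V[P^{(1)}_\s](f) = -\tilde D^{(1)}(V)(f)$ reduces to the pointwise matching
\[
\tfrac{1}{4} V[\Delta_\s] f + i\nabla_{X''_{V[F]}}(f) = \tfrac{1}{4}\Delta_{\tilde G(V)}(f) - \tilde c^{(1)}_\s(V[F], f),
\]
and establishing this last identity is the main obstacle. The strategy is to substitute the explicit Karabegov--Schlichenmaier expression for $\tilde c^{(1)}_\s$ underlying Theorem \ref{tKS1}, which in local K\"ahler coordinates is a pairing of $\partial$ and $\bar\partial$ derivatives contracted with the inverse K\"ahler metric, and match it term by term against the first-order remainder in $V[\Delta_\s] - \Delta_{\tilde G(V)}$ together with $i \nabla_{X''_{V[F]}}$, exploiting that $X''_{V[F]}$ is precisely the $(0,1)$-part of the Hamiltonian vector field of $V[F]$. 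Uniqueness of $P^{(1)}$ modulo an additive mapping class group invariant constant follows from the discussion immediately after Definition \ref{formaltrivi2}, while $\Gamma$-equivariance of the stated formula is automatic from the tensorial construction of $\Delta_\s$ and $X''_F$ out of $\Gamma$-equivariant data on $M$.
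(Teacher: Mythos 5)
The paper does not actually prove Theorem \ref{Expliformula}; it simply quotes the result from \cite{AG}, so there is no internal proof to compare against. Your outline does follow the route taken in \cite{AG}: expand $D_V(P(f))=0$ in powers of $h$, read off $\tD^{(1)}(V)$ from Theorem \ref{thm:1}, and verify the candidate $P^{(1)}_\s=\tfrac14\Delta_\s+i\nabla_{X''_F}$ by differentiating it along $V$. The reduction is carried out correctly: you rightly observe that the last group of terms in Theorem \ref{thm:1} contributes only at order $h^2$, that $V[F]\tBTstar f - V[F]f = h\,\tilde c^{(1)}_\s(V[F],f)+O(h^2)$, and the computation of $iV[\nabla_{X''_F}]$ producing the cancelling $-\tfrac12\nabla_{\tilde G(V)\cdot dF}$ term is the right mechanism.

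The gap is that the argument stops exactly where the real work begins. The ``pointwise matching'' identity
\[
\tfrac14 V[\Delta_\s]f + i\nabla_{X''_{V[F]}}f \;=\; \tfrac14\Delta_{\tilde G(V)}f - \tilde c^{(1)}_\s(V[F],f)
\]
is declared the main obstacle and then only a strategy for it is described; neither of the two facts it decouples into is established. First, you need $V[\Delta_\s]=\Delta_{\tilde G(V)}$ on functions. This is not merely a statement about principal symbols plus unspecified ``first-order remainders'': it is an exact identity, which holds because the Riemannian volume form of $g_\s$ equals $\omega^m/m!$ and is therefore $\s$-independent, so only the metric bivector varies in $\Delta_\s$, and $V[g]=\omega\cdot\tilde G(V)\cdot\omega$ gives $V[\tilde g_\s^{-1}]=\tilde G(V)$. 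Leaving the remainder unidentified means you do not know what must cancel against what. Second, you need the Karabegov--Schlichenmaier identification $\tilde c^{(1)}_\s(f_1,f_2)=-\left\langle \partial f_1,\bar\partial f_2\right\rangle_{g_\s}=-i\nabla_{X''_{f_1}}f_2$ (note $\tilde c^{(1)}=c^{(1)}$ by Remark \ref{rem:1}); this is a substantive external input that you invoke only vaguely. Until both are pinned down --- with consistent sign conventions for $X_F$, since a sign slip in $\iota_{X_F}\omega=dF$ would destroy the cancellation of the $\nabla_{\tilde G(V)\cdot dF}$ terms --- the candidate has not been verified, so the proposal is an accurate roadmap rather than a proof.
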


Now suppose we have a formal trivialization $P$ of the formal Hitchin
connection $D$.  We can then define a new smooth family of star
products, parametrized by $\T$, by
\[f\star_\s g = P_\s^{-1}(P_\s(f) \tBTstar_\s P_\s(g))\] for all
$f,g\in C^\infty(M)$ and all $\s\in \T$. Using the fact that $P$ is a
trivialization, it is not hard to prove

\begin{proposition}\label{ftrdq}
  The star products $\star_\s$ are independent of $\s\in\T$.
\end{proposition}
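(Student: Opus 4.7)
The plan is to show $V[f\star_\sigma g]=0$ for every vector field $V$ on $\T$ and every $\sigma$-independent $f,g\in C^\infty_h(M)$, by exploiting two facts: the formal Hitchin connection $D$ acts as a derivation of $\tBTstar_\sigma$, and the formal trivialization $P$ intertwines $D$ with the trivial connection.

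First I would establish the derivation property, namely
\[
D_V(a\tBTstar b) \;=\; (D_V a)\tBTstar b \;+\; a\tBTstar(D_V b)
\]
for smooth sections $a,b$ of $\C_h$ and any vector field $V$ on $\T$. To see this, apply $\Nablae_V$ to the composition $T^{(k)}_a T^{(k)}_b$. On the one hand, the induced connection on $\End(H^{(k)})$ obeys Leibniz, so by Theorem~\ref{MainFGHCI2} the result is asymptotic to $T^{(k)}_{D_V a}T^{(k)}_b+T^{(k)}_aT^{(k)}_{D_V b}$, which by Theorem~\ref{S} (in the $(k+n/2)^{-1}$ expansion of Remark~\ref{rem:1}) is asymptotic to $T^{(k)}_{(D_V a)\tBTstar b+a\tBTstar(D_V b)}$. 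On the other hand, $T^{(k)}_aT^{(k)}_b\sim T^{(k)}_{a\tBTstar b}$ and another application of Theorem~\ref{MainFGHCI2} gives $\Nablae_V(T^{(k)}_{a\tBTstar b})\sim T^{(k)}_{D_V(a\tBTstar b)}$. Asymptotic faithfulness of the Toeplitz correspondence (Theorem~\ref{BMS1}, applied order by order in $1/(k+n/2)$) forces the two formal symbols to coincide.

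Next I would translate the trivialization condition into an operator identity. Since $D_V(P(h))=0$ for every $\sigma$-independent $h\in C^\infty_h(M)$, expanding $D_V(P(h))_\sigma=V[P]_\sigma(h)+\tilde D(V)_\sigma P_\sigma(h)$ and letting $h$ vary yields $V[P]_\sigma=-\tilde D(V)_\sigma\circ P_\sigma$ as operators on $C^\infty_h(M)$. Differentiating $P_\sigma\circ P_\sigma^{-1}=\mathrm{Id}$ then gives $V[P^{-1}]_\sigma=P_\sigma^{-1}\circ\tilde D(V)_\sigma$. Consequently, for any smooth section $A$ of $\C_h$,
\[
V[P^{-1}(A)] \;=\; V[P^{-1}](A)+P^{-1}(V[A]) \;=\; P^{-1}\bigl(V[A]+\tilde D(V)A\bigr) \;=\; P^{-1}(D_V A).
\]
In particular, $P^{-1}$ maps $D$-parallel sections of $\C_h$ to $\sigma$-independent elements of $C^\infty_h(M)$.

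To finish, observe that for $\sigma$-independent $f,g$, the sections $\sigma\mapsto P_\sigma(f)$ and $\sigma\mapsto P_\sigma(g)$ are $D$-parallel by definition of formal trivialization. By the derivation property established first, $A_\sigma:=P_\sigma(f)\tBTstar_\sigma P_\sigma(g)$ is also $D$-parallel, and by the second step $V[f\star_\sigma g]=V[P_\sigma^{-1}(A_\sigma)]=P_\sigma^{-1}(D_V A)=0$. The principal technical point is the first step; the derivation identity must be extracted purely from the asymptotics of Toeplitz compositions, and one must be careful that the substitution $h=1/(k+n/2)$ is compatible with the $\Nablae$-Leibniz rule at every order, which is precisely what asymptotic faithfulness provides.
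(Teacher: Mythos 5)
Your proof is correct and follows essentially the same route as the paper's (which simply differentiates $\star_\s$ along a vector field on $\T$, as in \cite{A5}): the two ingredients are exactly the derivation property of $D_V$ with respect to $\tBTstar_\s$, extracted from the Toeplitz asymptotics, and the intertwining $V[P^{-1}(A)]=P^{-1}(D_VA)$ coming from the trivialization condition. The technical caveat you flag --- that the $O(k^{-(L+1)})$ remainders must remain of that order after applying $\Nablae_V$ so that the expansion can be differentiated term by term --- is precisely the point handled in \cite{A5}, so your argument matches the intended one.
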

This is done by simply differentiating $\star_\s$ along a vector field
on $\T$, see \cite{A5}.

Then, we have the following which is proved in \cite{A5}.

\begin{theorem}[Andersen]\label{general}
  Assume that the formal Hitchin connection $D$ is flat and
\[H^1_\Gamma(\T,D(M)) = 0,\] then there is a $\Gamma$-invariant
trivialization $P$ of $D$ and the star product
\[f\star g = P_\s^{-1}(P_\s(f) \tBTstar_\s P_\s(g))\] is independent
of $\s\in \T$ and $\Gamma$-invariant. If $H^1_\Gamma(\T,C^\infty(M)) =
0$ and the commutant of $\Gamma$ in $D(M)$ is trivial, then a
$\Gamma$-invariant differential star product on $M$ is unique.
\end{theorem}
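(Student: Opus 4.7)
The plan is to split the theorem into three independent pieces: existence of a $\Gamma$-equivariant trivialization $P$, the resulting $\s$-independence and $\Gamma$-invariance of the star product, and finally the uniqueness statement. The first two parts follow essentially directly from results already stated; uniqueness is the substantive step.

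\emph{Existence of $P$ and the star product.} Flatness of $D$ is given, Theorem~\ref{MainFGHCI2} yields $\tD \equiv 0 \bmod h$, and $H^1_\Gamma(\T,D(M))=0$ is assumed (the further hypothesis $H^1(\T,\bR)=0$ needed in Proposition~\ref{Dfc} is implicit in the setting, e.g.\ when $\T$ is Teichm\"{u}ller space, which is contractible). Proposition~\ref{Dfc} then supplies a globally defined $\Gamma$-equivariant formal trivialization $P:\T\to\D_h(M)$, and Proposition~\ref{ftrdq} immediately gives that $f\star g:=P_\s^{-1}(P_\s f \tBTstar_\s P_\s g)$ is independent of $\s$.

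\emph{$\Gamma$-invariance.} I would combine two covariance properties. First, the equivariance of $P$ itself, $P_{\gamma(\s)} = \gamma^*\circ P_\s \circ (\gamma^*)^{-1}$, inherited from $P(\phi(\sigma))=\phi^*P(\sigma)$. Second, the covariance of the Berezin--Toeplitz product, $\gamma^*(f\tBTstar_\s g) = (\gamma^*f)\tBTstar_{\gamma(\s)}(\gamma^*g)$, which follows from $\gamma^*T^{(k)}_{f,\s}=T^{(k)}_{\gamma^*f,\gamma(\s)}$ by passing to the asymptotic expansion of Theorem~\ref{S}. Evaluating the (now $\s$-independent) definition of $\star$ at $\gamma(\s)$, pulling back by $\gamma^*$, and applying these two identities then gives $\gamma^*(f\star g) = \gamma^*f\star \gamma^*g$ by a routine operator-theoretic verification.

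\emph{Uniqueness.} Let $\star'$ be another $\Gamma$-invariant differential star product on $M$. By the standard classification of differential star products on a symplectic manifold (De Wilde--Lecomte, Fedosov, Gutt--Rawnsley), $\star$ and $\star'$ are equivalent via a formal operator $T=\id+\sum_{l\geq 1}h^l T^{(l)}$ with $T^{(l)}\in\D(M)$. The first task is to promote $T$ to a $\Gamma$-invariant equivalence: at each order, the obstruction to making $T^{(l)}$ invariant is a $1$-cocycle for $\Gamma$ with values in $C^\infty(M)$, representing a class in $H^1_\Gamma(\T,C^\infty(M))$; by the vanishing hypothesis this class is a coboundary, and $T^{(l)}$ can be modified inductively to be $\Gamma$-invariant. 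Once $T$ is $\Gamma$-invariant, each $T^{(l)}$ is a $\Gamma$-invariant differential operator satisfying a derivation-type identity coming from $T(f\star g)=T(f)\star T(g)$; this forces $T^{(l)}$ to lie in the commutant of $\Gamma$ in $\D(M)$, which is trivial by hypothesis. Induction on $l$ then gives $T=\id$, hence $\star'=\star$.

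The main obstacle is the uniqueness step, specifically the cohomological identification of the obstructions to a $\Gamma$-invariant promotion of the equivalence with classes in $H^1_\Gamma(\T,C^\infty(M))$ order by order, combined with the inductive use of the triviality of the commutant. The existence and invariance parts reduce essentially mechanically to Propositions~\ref{Dfc} and~\ref{ftrdq} together with the covariance of the Berezin--Toeplitz quantization.
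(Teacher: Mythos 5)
The paper offers no proof of Theorem~\ref{general} --- it is quoted from \cite{A5} --- so your proposal can only be measured against the surrounding machinery and against \cite{A5} itself. Your first two steps assemble that machinery correctly and are essentially what the paper's own exposition indicates: Proposition~\ref{Dfc} (with contractibility of $\T$ supplying the $H^1(\T,\bR)=0$ hypothesis) yields the global $\Gamma$-equivariant $P$, Proposition~\ref{ftrdq} yields $\s$-independence, and $\Gamma$-invariance of $\star$ follows from the equivariance of $P$ combined with the covariance of the Berezin--Toeplitz product under pullback, as in the remark following Theorem~\ref{tKS1}. These parts are fine.

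The uniqueness step contains a genuine gap. You start by asserting that any other $\Gamma$-invariant differential star product $\star'$ is equivalent to $\star$ ``by the standard classification.'' That classification does not say any two differential star products on a symplectic manifold are equivalent; it says the equivalence classes are parametrized by $\frac{[\omega]}{ih}+H^2(M,\bR)\formal{h}$. On the moduli spaces of interest $H^2(M,\bR)\neq 0$, and since $\Gamma$ acts trivially on $H^2(M,\bR)$ one can manufacture (e.g.\ by a $\Gamma$-invariant Fedosov construction) $\Gamma$-invariant differential star products with distinct characteristic classes, hence inequivalent ones. So either the theorem is implicitly restricted to star products in a fixed equivalence class, or one must argue that the hypotheses pin down the class; your proof does neither, and this is precisely the non-mechanical content of the uniqueness claim. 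Two smaller points: (i) your obstruction to equivariance is naturally a group cocycle in $H^1(\Gamma,C^\infty(M))$ (valued in the inner self-equivalences of $\star$, which requires $H^1(M,\bR)=0$ to exclude outer ones), whereas the hypothesis is the vanishing of the equivariant de Rham cohomology $H^1_\Gamma(\T,C^\infty(M))$ of $\T$; these coincide when $\T$ is contractible, but that identification must be made since $T$ does not a priori depend on $\T$. (ii) Once $T$ is $\Gamma$-equivariant, each $T^{(l)}$ lies in the commutant of $\Gamma$ in $\D(M)$ by definition --- no derivation-type identity is needed --- and triviality of the commutant together with $T(1)=1$ gives $T=\id$; that final step is correct but simpler than you present it.
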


In \cite{AG} the star product of Theorem~\ref{general} is identified
up to second order in $h$. 

\begin{theorem}\label{star}
  The star product $\star$ has the form
  \[f\star g = fg - \frac{i}{2} \{f,g\} h + O(h^2).\]
\end{theorem}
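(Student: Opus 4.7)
The strategy is to expand $f \star g = P_\s^{-1}(P_\s(f) \tBTstar_\s P_\s(g))$ to first order in $h$ and verify that the resulting coefficient is purely antisymmetric and equals $-\tfrac{i}{2}\{f,g\}$. Writing $P_\s = \id + h P^{(1)}_\s + O(h^2)$ and hence $P_\s^{-1} = \id - h P^{(1)}_\s + O(h^2)$, and recalling from Theorem~\ref{S} that $f \tBTstar_\s g = fg + h\,\tilde c^{(1)}_\s(f,g) + O(h^2)$, a direct substitution gives
\begin{equation*}
f \star g \;=\; fg \;+\; h\bigl(\tilde c^{(1)}_\s(f,g) + P^{(1)}_\s(f)g + f P^{(1)}_\s(g) - P^{(1)}_\s(fg)\bigr) \;+\; O(h^2).
\end{equation*}
So the zeroth order term is immediately $fg$, and the entire content of the theorem is to identify the $h$-coefficient as $-\tfrac{i}{2}\{f,g\}$.

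I would split that coefficient into its antisymmetric and symmetric parts in $(f,g)$. The correction $R(f,g) := P^{(1)}_\s(f)g + f P^{(1)}_\s(g) - P^{(1)}_\s(fg)$ is manifestly symmetric in $f$ and $g$, so the antisymmetric part of the $h$-coefficient is just $\tfrac{1}{2}(\tilde c^{(1)}_\s(f,g) - \tilde c^{(1)}_\s(g,f))$. Since $\tBTstar_\s$ is a deformation quantization of $(M,\omega)$ (Theorem~\ref{tKS1}), this antisymmetric part equals $-\tfrac{i}{2}\{f,g\}$, giving exactly the desired term.

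It remains to show that the symmetric part $\tfrac{1}{2}(\tilde c^{(1)}_\s(f,g) + \tilde c^{(1)}_\s(g,f)) + R(f,g)$ vanishes; this is where the specific form of $P^{(1)}_\s$ from Theorem~\ref{Expliformula} is essential. Using $P^{(1)}_\s = \tfrac{1}{4}\Delta_\s + i\nabla_{X''_F}$, the vector-field term contributes nothing to $R$ by the Leibniz rule, so $R(f,g) = \tfrac{1}{4}(\Delta_\s(fg) - \Delta_\s(f)g - f\Delta_\s(g))$. In local K\"ahler coordinates this is proportional to $g^{j\bar k}_\s(\partial_j f \partial_{\bar k} g + \partial_{\bar k} f \partial_j g)$. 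On the other hand, the Karabegov--Schlichenmaier formula for the Berezin--Toeplitz coefficient (cf.\ \cite{KS}) expresses $\tilde c^{(1)}_\s(f,g)$ as a multiple of $g^{j\bar k}_\s \partial_{\bar k} f \partial_j g$, whose symmetrization gives precisely the same bidifferential expression with the opposite normalization constant. Matching the constants from Theorem~\ref{Expliformula} against those from the Karabegov--Schlichenmaier identification shows that the symmetric part cancels.

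The main obstacle is pinning down the precise normalizations so that the cancellation between the symmetric part of $\tilde c^{(1)}_\s$ and the Leibniz defect of $\tfrac{1}{4}\Delta_\s$ is exact. Alternatively, one can argue more structurally: since $f\star g$ is by Proposition~\ref{ftrdq} independent of $\s$, the $\s$-derivative of the symmetric part must vanish, and evaluating at a judiciously chosen $\s$ (or using the formal-trivialization equation $D_V P = 0$ modulo $h^2$) forces the symmetric part to reduce to a constant bidifferential expression which one then checks is zero. Either route yields $c^{(1)}_\star(f,g) = -\tfrac{i}{2}\{f,g\}$, completing the proof.
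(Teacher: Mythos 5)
The paper does not actually prove Theorem~\ref{star}; it is quoted from \cite{AG}, so there is no in-text proof to compare against. Your expansion is nevertheless the right one and matches how the result is obtained there: writing $P_\s = \id + hP^{(1)}_\s + O(h^2)$ gives the $h$-coefficient $\tilde c^{(1)}_\s(f,g) + R(f,g)$ with $R(f,g) = P^{(1)}_\s(f)g + fP^{(1)}_\s(g) - P^{(1)}_\s(fg)$, the symmetry of $R$ is correct, the vector-field part of $P^{(1)}_\s$ from Theorem~\ref{Expliformula} indeed drops out of $R$ by the Leibniz rule, and the antisymmetric part is pinned down by the deformation-quantization axiom together with $\tilde c^{(1)}_\s = c^{(1)}_\s$ from Remark~\ref{rem:1}. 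That much is airtight using only what the paper provides.

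The one genuine soft spot is the cancellation of the symmetric part, which you assert by ``matching constants'' but do not carry out. The paper only records $c^{(0)}(f,g)=fg$; the explicit first-order Berezin--Toeplitz coefficient $c^{(1)}_\s(f,g) = -g^{j\bar k}_\s\,\partial_j f\,\partial_{\bar k}g$ (in the convention making $c^{(1)}(f,g)-c^{(1)}(g,f)=-i\{f,g\}$) has to be imported from \cite{KS}/\cite{Sch} and then checked against $\tfrac14\bigl(\Delta_\s(fg)-\Delta_\s(f)g-f\Delta_\s(g)\bigr) = \tfrac12 g^{j\bar k}_\s\bigl(\partial_jf\,\partial_{\bar k}g+\partial_{\bar k}f\,\partial_jg\bigr)$; with the normalization $\Delta_\s = 2g^{j\bar k}_\s\partial_j\partial_{\bar k}$ implicit in Theorem~\ref{Expliformula} this does cancel the symmetrization of $c^{(1)}_\s$ exactly, so your claim is true --- but it is the entire content of the first-order statement and should be written out rather than waved at. Your fallback ``structural'' argument does not close this gap: Proposition~\ref{ftrdq} only tells you the symmetric part is independent of $\s$, not that it vanishes, and there is no distinguished $\s$ at which it is manifestly zero; you would still need the explicit form of $c^{(1)}_\s$ at some point.
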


We observe that this formula for the first-order term of $\star$
agrees with the first-order term of the star product constructed by
Andersen, Mattes and Reshetikhin in \cite{AMR2}, when we apply the
formula in Theorem \ref{star} to two holonomy functions
$h_{\gamma_1,\lambda_1}$ and $h_{\gamma_2,\lambda_2}$:
\[h_{\gamma_1,\lambda_1}\star h_{\gamma_2,\lambda_2} =
h_{\gamma_1\gamma_2,\lambda_1\cup\lambda_2} - \frac{i}{2}
h_{\{\gamma_1,\gamma_2\},\lambda_1\cup\lambda_2} + O(h^2).\] We recall
that $\{\gamma_1,\gamma_2\}$ is the Goldman bracket (see \cite{Go2})
of the two simple closed curves $\gamma_1$ and $\gamma_2$.

A similar result was obtained for the abelian case, i.e. in the case
where $M$ is the moduli space of flat $U(1)$-connections, by the first
author in \cite{A2}, where the agreement between the star product
defined in differential geometric terms and the star product of
Andersen, Mattes and Reshetikhin was proved to all orders.

\section{Abelian varieties and $U(1)$-moduli space}
\label{sec:abelian}

In this chapter we will investigate all the previous mentioned objects in
the setting of principally polarized abelian varieties $M = V /
\Lambda$, where $V$ is a real vector space with a symplectic form $\omega$, $\Lambda$ a discrete
lattice of maximal rank such that $\omega$ is integral and unimodular
when restricted to $\Lambda$. Let now $\T$ be the space of complex
structures on $V$, which are compatible with $\omega$. Then for any $I
\in \T$, $M_I = (M,\omega,I)$ is an abelian variety. A prime example
of an abelian variety is the abelian moduli space. Here we let $\Sigma$ be a
closed surface of genus $g$, and $M$ be the moduli space of flat
$\U(1)$-connections on $\Sigma$. Then
\[
M = \Hom(\pi_1(\Sigma),\U(1)) = H^1(\Sigma,\bR) / H^1(\Sigma,\Z).
\]
There is the usual symplectic structure $\omega$ on $H^1(M,\bR)$ which
is of course integral and unimodular over the lattice $H^1(M,\Z)$. We
will return to this example below when we consider abelian
Chern--Simons theory.

In the following we will focus on $M_I$ being a principal polarized
abelian variety, where the compatible complex structures are
parametrized by $\T$.
There exists a symplectic basis $(\lambda_1, \dots, \lambda_{2n})$
over the integers for $\Lambda$ (e.g. \cite[p. 304]{GH}). Let $(x_1,
\dots, x_n,y_1, \dots, y_{2n})$ be the dual coordinates on $V$. Then
\[
\omega = \sum_{i=1}^n dx_i \wedge dy_i.
\]

Let $A$ be the automorphism group of $(\Lambda,\omega)$. Then $A$
injects into the symplectomorphism group of $(M,\omega)$, and by using
the symplectic basis $(\lambda_1, \dots, \lambda_{2n})$ we get an
identification $A \simeq \text{Sp}(2n,\Z)$. Notice that $A$ acts
on the principal polarized variety $M_I$.

Using the symplectic basis we can identify $\T$ with the Siegel Upper
Half Space
\[
\bH = \{Z \in M_{n,n}(\C) \, | \, Z = Z^T, \, \im(Z)> 0\}.
\]
For any $I \in \T$ we have that $(\lambda_1, \dots, \lambda_{n})$ is
a basis over $\C$ for $V$ with respect to $I$. Let $(z_1, \dots, z_n)$
be the dual complex coordinates on $V$ relative to the basis
$(\lambda_1, \dots, \lambda_n)$. The complex structure $I$ determines
and is determined by a unique $Z \in \bH$ such that
\[
z = x+ Zy.
\]
Since any $Z \in \bH$ gives a complex structure, say $I(Z)$,
compatible with the symplectic form, we have a bijective map $I: \bH
\to \T$ given by sending $Z \in \bH$ to $I(Z)$. For $Z \in \bH$ we use
the notation $X = \re(Z)$ and $Y = \im (Z)$.

For each $Z \in \bH$ we explicitly construct a prequantum line bundle on
$M_{I(Z)}$. We do that by providing a lift of the action $\Lambda$
action on $V$ to the trivial bundle $\tilde{\cL} = V \times \C$, such
that the quotient is the prequantum line bundle $\cL_Z$. We only need to specify a set of multipliers
$\{e_\lambda\}_{\lambda \in \Lambda}$ and a
Hermitian structure $h$. The multipliers are non-vanishing functions on
$V$ that are holomorphic with respect to $I(Z)$ and depend on $Z$. They
should furthermore satisfy the following functional equation
\[
e_{\lambda'}(v+\lambda) e_\lambda(v) = e_{\lambda'}(v)e_\lambda(v+\lambda') = e_{\lambda+\lambda'}(v), 
\] for all $\lambda,\lambda' \in \Lambda$. The action of $\Lambda$ on
$\tilde{\cL}$ is given by
\[
\lambda\cdot (v,z) = (v+\lambda,e_\lambda(z)),
\] for all $\lambda \in \Lambda$ and $(v,z)\in \tilde{\cL}$. For a
fixed basis of $\Lambda$ the functional equations determine the multipliers for all
$\lambda \in \Lambda$. For $I(Z)$ we choose the multipliers
\begin{align*}
  e_{\lambda_i}(z) &= 1, & i = 1,\dots , n, \\
  e_{\lambda_{n+i}}(z) &= e^{-2\pi i z_i - \pi i Z_{ii}}, & i= 1, \dots, n.
\end{align*}
 The constructed line bundle is denoted $\cL_Z$. If we
define $h(z) = e^{-2\pi y \cdot Y y}$, where $Z = X + i Y$, it will
define a Hermitian structure on $V \times \C$ by
$h(z)\inner{\cdot}{\cdot}_\C$ where $\inner{\cdot}{\cdot}_\C$ is the
standard inner product on $\C^n$. This function satisfies the functional equation
\[
h(z + \lambda) = \frac{1}{\abs{e_\lambda(z)}^2}h(z),
\]
and the inner product on $V\times \C$ is invariant under the action of
$\Lambda$ and hence induces a Hermitian structure
$\inner{\cdot}{\cdot}$ on $\cL_Z$. By general theory of abelian
varieties, e.g. \cite [Sect. 2.6]{GH}, a line bundle with the above
multipliers and Hermitian metric $(\cL_Z,\inner{\cdot}{\cdot})$ has
curvature $-2\pi i\omega$, and hence is a prequantum line bundle. Note
taht the prequantum condition in Definition~\ref{prequantumb} is
scaled with $2\pi$. We could just have used $2\pi \omega$ as the
symplectic structure. We choose the normalization at hand to make
later equations nicer.

The space of holomorphic sections of $\cL^k_Z$, $H^0(M_Z, \cL^k_Z)$
has dimension $k^n$, and as in the general theory they give a vector
bundle $H^{(k)}$ over $\bH$ by letting $H^{(k)}_Z = H^0(M_Z,\cL^k_Z)$.

 The
$L^2$-inner product on $H^0(M_Z,\cL_Z^k)$ is given by
\[
(s_1,s_2) = \int_{M_Z} s_1(z)\overline{s_2(z)}h(z)dxdy,
\]
for $s_1,s_2 \in H^0(M_Z,\cL^k_Z)$.

A basis for
the space of sections are the \emph{Theta functions},
\[
\theta_\alpha^{(k)}(z,Z) = \sum_{l\in \Z^n}e^{\pi i k (l+\alpha)\cdot
  Z(l+\alpha)}e^{2\pi i k (l+\alpha)\cdot z},
\] where $\alpha \in \frac{1}{k}\Z^n /
\Z^n$. The Theta functions satisfies the following heat
equation,
\[
\frac{\partial \theta^{(k)}_\alpha}{\partial Z_{ij}} = \frac{1}{4\pi
  i k} \frac{\partial^2 \theta^{(k)}_\alpha}{\partial z_i \partial z_j}.
\]

The geometric interpretation of this differential equation is a
definition of a
connection $\tilde{\nabla}$ in the trivial $C^\infty(\C^n)$-bundle
over $\bH$, by
\[
 \tilde{\nabla}_{\frac{\partial}{\partial Z_{ij}}} =
 \frac{\partial}{\partial Z_{ij}} - \frac{1}{4\pi i k}
 \frac{\partial^2}{\partial z_i \partial z_j}.
\]
The coordinates $z = x+Zy$ identify
$H^0(M_Z,\cL^k_Z)$ as a subspace of $C^\infty(\C^n)$ and $H^{(k)}$ as a
subbundle of the trivial $C^{\infty}(\C^n)$-bundle on $\bH$. This
bundle is preserved by $\tilde{\nabla}$ and hence induces a
connection $\nabla$ in $H^{(k)}$. The covariant constant sections of
$H^{(k)}$ with respect to $\nabla$ will, under the embedding induced
by the coordinates, be identified with the Theta functions. Since
now $\nabla$ has a global frame of covariant constant sections it
is flat. Remember that $\bH$ is contractible, so since parallel
transport with a flat connection only depend on the homotopy class of
the curve transported along, we get a canonical way to identify all
$H^0(M_Z,\cL^k_Z)$, and hence there is no ambiguity in defining the
quantum space of geometric quantization to be
$H^0(M_Z,\cL^k_Z)$. Since the Theta functions are covariant constant,
they explicitly realize this identification. The usual action of
$\text{Sp}(2n,\Z)$ on Theta functions induce an action of $A' =
\ker(\text{Sp}(2n,\Z) \to \text{Sp}(2n,\Z/2\Z))$ on the
bundle $\Hk$ which covers the $A'$-action on $\bH \simeq \T$. This is
the subgroup of $A$ acting trivially on $\Lambda /2\Lambda$.

\begin{remark}
Instead of the above connection $\nabla$ in $\Hk$ over
$\bH$, we could have rolled out the machinery of Theorem~\ref{HCE} to
get another connection in the same bundle. This can be done even
though $H^1(M,\bR) \neq 0$. Since the torus is flat the Ricci potential
$F$ is $0$ as is the Chern class of $M$. Lemma~\ref{lem:2} in the
appendix shows that $I(Z)$ is constant on $M$
and thus is a rigid family of K\"{a}hler structures. Thus we have a
rather nice formula for the Hitchin connection
\[
\Nabla_V = \nabla^t_V + \frac{1}{8\pi k}\Delta_{G(V)}.
\]
The extra factor of $2\pi$ is from the different prequantum condition. It should be noted that explicit computations show that $\Nabla$ is
not flat like $\nabla$ induced by the heat equation, but rather
projectively flat. 
\end{remark}

In \cite{A2} the inner product of two Theta functions are explicitly
calculated.
\begin{lemma}
  \label{lem:4}
    The theta functions $\theta^{(k)}_\alpha(z,Z)$, $\alpha \in
  \frac{1}{k}\Z^n / \Z^n$, define an orthonormal basis with respect to
  the inner product on $H^0(M_Z,\cL^k_Z)$ defined by
  \[
  (s_1,s_2)_Y = (s_1, s_2) \sqrt{2^nk^n \det{Y}},
  \]
  where $Y = \im Z$. This is a Hermitian structure on $\Hk$ compatible
  with $\nabla$.
\end{lemma}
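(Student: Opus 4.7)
The plan is to evaluate the matrix entries $(\theta^{(k)}_\alpha, \theta^{(k)}_\beta)$ directly from the series formula, and then read off the correct normalization. Substituting the theta series into the Hermitian pairing on $H^0(M_Z,\cL^k_Z)$ gives a double sum over $l,m\in\Z^n$ integrated against a fundamental domain for $\Lambda$, which in the real coordinates $z=x+Zy$ we take to be $[0,1]^n\times[0,1]^n$. The $x$-dependence of the integrand is the pure exponential $e^{2\pi i(k(l+\alpha)-k(m+\beta))\cdot x}$, whose integral over $[0,1]^n$ is a Kronecker delta on $\Z^n$. Since $\alpha,\beta\in\tfrac{1}{k}\Z^n/\Z^n$ lift uniquely to representatives in $\{0,\dots,k-1\}^n/k$, this delta forces both $\alpha=\beta$ and $l=m$.

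For $\alpha=\beta$, only the diagonal terms survive. Using $Z-\bar Z=2iY$, the remaining integrand collapses to
\[
\sum_{l\in\Z^n} e^{-2\pi k(l+\alpha)\cdot Y(l+\alpha)}\, e^{-4\pi k(l+\alpha)\cdot Yy}\, e^{-2\pi k y\cdot Yy},
\]
and completing the square rewrites this as $\sum_l e^{-2\pi k(l+\alpha+y)\cdot Y(l+\alpha+y)}$. Unfolding the sum over $l$ against the integral over $y\in[0,1]^n$ converts the pair into a single Gaussian integral over $\R^n$, which equals $(\det(2kY))^{-1/2}=(2^n k^n\det Y)^{-1/2}$. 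Hence $(\theta^{(k)}_\alpha,\theta^{(k)}_\beta)=\delta_{\alpha,\beta}(2^n k^n \det Y)^{-1/2}$, and multiplying by $\sqrt{2^n k^n\det Y}$ gives $(\theta^{(k)}_\alpha,\theta^{(k)}_\beta)_Y=\delta_{\alpha,\beta}$. This is orthonormality.

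For the compatibility of $\nabla$ with $(\cdot,\cdot)_Y$: the heat equation satisfied by $\theta^{(k)}_\alpha(z,Z)$ is precisely the statement that the theta functions are covariant constant with respect to $\tilde\nabla$, hence with respect to $\nabla$. Expanding arbitrary smooth sections in this global parallel frame as $s_i=\sum_\alpha c^{(i)}_\alpha(Z)\,\theta^{(k)}_\alpha$, we have $\nabla s_i=\sum_\alpha(dc^{(i)}_\alpha)\theta^{(k)}_\alpha$ and $(s_1,s_2)_Y=\sum_\alpha c^{(1)}_\alpha\overline{c^{(2)}_\alpha}$, whose exterior derivative equals $(\nabla s_1,s_2)_Y+(s_1,\nabla s_2)_Y$ by the ordinary Leibniz rule. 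The main technical step is the Gaussian completion of the square together with the unfolding trick; neither is deep, but the bookkeeping of the factors of $k$ and of $\text{Im}\,Z$ must be tracked carefully to land exactly on $\sqrt{2^n k^n\det Y}$.
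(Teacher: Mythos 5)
Your computation is correct, and it is worth noting that the paper itself gives no proof of this lemma at all; it simply defers to the explicit calculation in the cited reference [A2], and your argument is exactly the standard direct evaluation that citation refers to: the $x$-integration forcing $\alpha=\beta$ and $l=m$, the completion of the square using $Z-\bar Z=2iY$, the unfolding of the lattice sum against the $y$-integral into a single Gaussian over $\R^n$, and the resulting value $(\det(2kY))^{-1/2}$, which is precisely what the normalization $\sqrt{2^nk^n\det Y}$ is designed to cancel. You also correctly use the level-$k$ weight $e^{-2\pi k\,y\cdot Yy}$ (the paper's formula writes $h(z)$ where $h(z)^k$ is meant), without which the square would not complete, and your Leibniz-rule argument in the parallel theta frame for compatibility of $(\cdot,\cdot)_Y$ with $\nabla$ is the right one, since the whole point of the rescaling is to make the Gram matrix of the parallel frame constant in $Z$.
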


Let $(r,s) \in \Z^n \times \Z^n$ and consider the function $F_{r,s} \in
\C^\infty(M)$ given in $(x,y)$-coordinates by 
\[
F_{r,s}(x,y) = e^{2\pi i (x \cdot r + s \cdot y)}. 
\]
We have previously defined Toeplitz operators associated to a function
$f\in C^\infty(M)$, $T^{(k)}_f: H^0(M_Z, \cL_Z^k) \to
H^0(M_Z,\cL^k_Z)$. We shall now
explicitly compute the matrix coefficients of these operators in terms
of the basis consisting of Theta functions. 

To get our hands on the matrix coefficients $(T^{(k)}_{F_{r,s}})_{\beta\alpha}$ we only need to calculate
$(F_{r,s}\theta^{(k)}_\alpha,\theta^{(k)}_\beta)$, since this indeed
is the coefficient. This is also calculated in
\cite{A2} and is done in the exact same way as in Lemma~\ref{lem:4},
\begin{equation}
  \label{eq_dq_gq_ab:1}
  (F_{r,s}\theta^{(k)}_\alpha,\theta^{(k)}_\beta)_Y =
  \delta_{\alpha-\beta,-[\frac{r}{k}]}e^{-\frac{\pi i}{k}r\cdot
    \bar{Z}r}e^{-2\pi i s \cdot \alpha} e^{-\pi^2(s-\bar{Z}r)\cdot
    (2\pi k Y)^{-1}(s-\bar{Z}r)},
\end{equation}
where $[\frac{r}{k}]$ is the residue class of $\frac{r}{k}$ mod
$\Z^n$. A simple rewriting gives
\[
(T^{(k)}_{f(r,s,Z)(k)F_{r,s}})_{\beta\alpha} =
\delta_{\alpha-\beta,-[\frac{r}{k}]}e^{-\frac{\pi i}{k} r\cdot
  s}e^{-2\pi i s \cdot \alpha},
\] where
\[
f(r,s,Z)(k) = e^{\frac{\pi}{2k}(s-Xr)\cdot
  Y^{-1}(s-Xr)}e^{\frac{\pi}{2k}r \cdot Yr}.
\]
\begin{remark}
  \label{rem:4}
  The Toeplitz operators $T^{(k)}_{F_{r,s}}$ are sections of
  $\End(\Hk)$ over $\bH$. The flat connection $\nabla$ induces a flat
  connection $\nabla^e$ in the bundle $\End(\Hk)$, with
  respect to which we see that $T^{(k)}_{F_{r,s}}$ is \emph{not}
  covariant constant. However the operators
  $T^{(k)}_{f(r,s,Z)(k)F_{r,s}}$ are covariant constant. Since the
  pure phases $F_{r,s}$, $r,s \in \Z^n$ is a Fourier basis for
  $C^\infty(M)$, we have that $T^{(k)}_{f(r,s,Z)(k)F_{r,s}}$ is covariant constant
  with respect to $\Nablae$ for all $f \in C^\infty(M)$.
\end{remark}

It should also be noted that the coefficient $f(r,s,Z)(k)$ is not so
arbitrary as it looks. This is the content of the following
\begin{proposition}
  \label{prop:1}
  Let $\Delta_{I(Z)}$ be the Laplace operator with respect to the
  metric
\[
g_{I(Z)}(\cdot,\cdot) = 2\pi \omega(\cdot, I(Z) \cdot)
\] on $M$. Then
\[
e^{-\frac{1}{4k}\Delta_{I(Z)}}F_{r,s} = f(r,s,Z)(k)F_{r,s}.
\]
\end{proposition}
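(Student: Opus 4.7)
The plan is as follows. Because $I(Z)$ is pulled back from a constant complex structure on the covering space $V$ and $\omega = \sum dx_i\wedge dy_i$ is translation invariant, the K\"ahler metric $g_{I(Z)} = 2\pi\omega(\cdot,I(Z)\cdot)$ has constant coefficients in the $(x,y)$-coordinates on $M$. Consequently $\Delta_{I(Z)} = g_{I(Z)}^{ab}\partial_a\partial_b$ is a constant-coefficient operator and the pure phase $F_{r,s}$ is automatically an eigenfunction:
\[
\Delta_{I(Z)}F_{r,s} = -4\pi^{2}\, g_{I(Z)}^{ab}u_{a}u_{b}\, F_{r,s},
\]
with $u = (r,s)\in \Z^{n}\oplus\Z^{n}$. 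Expanding the power series of $e^{-\Delta_{I(Z)}/(4k)}$, it then suffices to identify the quadratic form
\[
g_{I(Z)}^{ab}u_{a}u_{b} \;=\; \frac{1}{2\pi}\bigl[(s-Xr)\cdot Y^{-1}(s-Xr) + r\cdot Yr\bigr],
\]
since then $e^{-\Delta_{I(Z)}/(4k)}F_{r,s} = e^{(\pi^{2}/k)g_{I(Z)}^{ab}u_{a}u_{b}}F_{r,s} = f(r,s,Z)(k)F_{r,s}$, as claimed.

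To identify this quadratic form, I would express $I(Z)$ as a real matrix in the frame $(\partial_{x_a},\partial_{y_a})$. Inverting the coordinate change $z = x+Zy$, $\bar z = x+\bar Z y$ gives $\partial_{z} = W^{-1}(\partial_{y} - \bar Z\partial_{x})$ and $\partial_{\bar z} = -W^{-1}(\partial_{y} - Z\partial_{x})$ with $W = Z-\bar Z = 2iY$. Substituting these into $I(Z)\partial_{x} = i(\partial_{z} - \partial_{\bar z})$ and $I(Z)\partial_{y} = i(Z\partial_{z} - \bar Z\partial_{\bar z})$, and simplifying by means of the identity
\[
(X\pm iY)Y^{-1}(X\mp iY) = XY^{-1}X + Y,
\]
which holds for any real symmetric $X,Y$ with $Y$ invertible (the cross terms $\mp iXY^{-1}Y \pm iYY^{-1}X = \mp iX \pm iX$ cancel, so no commutativity between $X$ and $Y$ is needed), one obtains
\[
I(Z) \;=\; \begin{pmatrix} -XY^{-1} & -(XY^{-1}X+Y) \\ Y^{-1} & Y^{-1}X \end{pmatrix}.
\]

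With $\omega$ having the matrix $\left(\begin{smallmatrix}0 & I_{n} \\ -I_{n} & 0\end{smallmatrix}\right)$ in the same frame, the identity $g_{I(Z)} = 2\pi\,\omega\cdot I(Z)$ then yields the Gram matrix and, by block inversion, its inverse:
\[
g_{I(Z)} = 2\pi \begin{pmatrix} Y^{-1} & Y^{-1}X \\ XY^{-1} & XY^{-1}X + Y\end{pmatrix},\qquad g_{I(Z)}^{-1} = \frac{1}{2\pi}\begin{pmatrix} Y + XY^{-1}X & -XY^{-1}\\ -Y^{-1}X & Y^{-1}\end{pmatrix}.
\]
Evaluating $g_{I(Z)}^{-1}$ on $u = (r,s)^{T}$ and completing the square in the variable $s-Xr$ produces exactly the required expression, which when combined with the first step gives the proposition. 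The only delicate point is keeping track of the $2n\times 2n$ block algebra in the generally non-commutative setting $[X,Y]\neq 0$; once the identity $(X\pm iY)Y^{-1}(X\mp iY) = XY^{-1}X + Y$ is in hand everything else is routine linear algebra, since on the flat torus the Laplacian already reduces to a constant-coefficient operator and no analytic work is required.
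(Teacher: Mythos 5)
Your proposal is correct and follows essentially the same route as the paper: the paper's proof simply recalls the constant-coefficient expression $\Delta_{I(Z)} = \frac{1}{2\pi}\bigl((\partial_y - X\partial_x)\cdot Y^{-1}(\partial_y - X\partial_x) + \partial_x\cdot Y\partial_x\bigr)$ and omits the remaining computation on the Fourier mode $F_{r,s}$. Your derivation of $g_{I(Z)}^{-1}$ from $I(Z)$ and $\omega$ reproduces exactly that operator (the quadratic form $u^Tg^{-1}u$ equals $\frac{1}{2\pi}[(s-Xr)\cdot Y^{-1}(s-Xr)+r\cdot Yr]$ after completing the square), so you are supplying the details the paper leaves out rather than taking a different path.
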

\begin{proof}
  Recall that 
\[
\Delta_{I(Z)} = \frac{1}{2\pi} \biggl( (\dery{} - X \derx{}) \cdot
Y^{-1}(\dery{} - X \derx{}) + \derx{} \cdot Y \derx{} \biggr).
\]
Now it is a simple calculation, which we will omit, to show the equality.
\end{proof}

As remarked in Remark~\ref{rem:4}, $T^{(k)}_{f(r,s,Z)(k)F_{r,s}}$ is
covariant constant with respect to $\Nabla$. If we define 
\[ E_{I(Z)} = e^{-\frac{h}{4}\Delta_{I(Z)}}: C^\infty_h(M) \to C^\infty_h(M) \]
we see that
\[
\Nablae_V T^{(k)}_{E_I(f)(1/k)} = 0,
\] for all vector fields $V$ on $\bH$ and all functions $f \in
C^\infty(M)$ since the pure phase functions consitute a Fourier basis. We furthermore see that $E_I$ is
$\text{Sp}(2n,\Z)$-equivariant, since for all $\Psi \in \text{Sp}(2n,\Z)$
we have that
\[
\Psi^* \circ E_I = E_{\Psi(I)}\Psi^*.
\]

That $T^{(k)}_{E_I(f)}$ is covariant constant with respect to
$\Nablae$ can be interpreted as $E_I$ is a formal parametrization for
the formal Hitchin Connection which we know exists by
Theorem~\ref{MainFGHCI2}. If $\Nablae_V T^{(k)}_{E_I(f)} = 0$
Equation~\eqref{Tdf2} and Theorem~\ref{BMS1} imply that
\[
D_V(E_I(f)) = 0
\]
for all vector fields on $\bH$ and all $f \in C^\infty_h(M)$, so by
Definition~\ref{formaltrivi2}, $E_I$ is a formal trivialization of the
formal connection $D$. We compare this with the explicit formula for
the first order term of $P$ in Theorem~\ref{Expliformula}, and see
that they agree since the Ricci potential $F$ is $0$.

Now since the Ricci potential is $0$ we reduce the formula in
Theorem~\ref{thm:1} for the formal Hitchin connection.

\begin{theorem}
  \label{thm:4}
  Let $(M,\omega,I(Z))$ be a principal polarized variety, then the formal Hitchin connection is given by
  \[
  D_Vf = V[f] - \frac{1}{8\pi}h\Delta_{\tilde{G}(V)}(f),
\] and if $Z$ is normal, we get explicit formulas for
$\Delta_{\tilde{G}(V)}$. If $i \neq j$
\[
\Delta_{\tilde{G}(\dert{ij})} = 2i \nabla_{\derz{i}}\nabla_{\derz{j}} + 2i
\nabla_{\derz{j}}\nabla_{\derz{i}} \enskip \text{and} \enskip
\Delta_{\tilde{G}(\derbt{ij})} = 2i \nabla_{\derbz{i}}\nabla_{\derbz{j}} + 2i \nabla_{\derbz{j}}\nabla_{\derbz{i}},
\]
and if $i = j$
\[
\Delta_{\tilde{G}(\dert{ii})} = 2i \nabla_{\derz{i}}\nabla_{\derz{i}} \quad
\text{and} \quad \Delta_{\tilde{G}(\derbt{ii})} = 2i \nabla_{\derbz{i}}\nabla_{\derbz{i}}.
\]
\end{theorem}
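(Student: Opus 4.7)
The plan is to specialize Theorem~\ref{thm:1} to the torus and then to compute $\tilde G(V)$ explicitly when $Z$ is diagonal.

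For the first claim, I would observe that the flat torus $M_{I(Z)}$ has vanishing Ricci tensor, so the Ricci potential can be taken to be $F\equiv 0$ for every $Z\in\bH$. Substituting $F=0$ into the formula of Theorem~\ref{thm:1} causes all the terms containing $F$, $dF$, $V[F]$ or $\tBTstar$ to vanish, leaving only $V[f]-\tfrac14 h\Delta_{\tilde G(V)}(f)$. The replacement of $\tfrac14$ by $\tfrac1{8\pi}$ is an artifact of the prequantum normalization used throughout Section~\ref{sec:abelian}: the connection $\nabla$ on $\cL_Z$ has curvature $-2\pi i\omega$ instead of $-i\omega$, which rescales the coefficient by $2\pi$, exactly as in the formula $\Nabla_V=\nabla^t_V+\tfrac{1}{8\pi k}\Delta_{G(V)}$ recorded in the earlier remark of this section.

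For the explicit formulas, fix a diagonal $Z\in\bH$. The main step is to compute $V[I]$ for $V=\dert{ij}$ and $V=\derbt{ij}$, and then to solve the defining relation $\tilde G(V)\cdot\omega=V[I]$ for the symmetric bivector field $\tilde G(V)$. Writing the $(1,0)$-vectors in the $(x,y)$-frame via $z=x+Zy$ yields expressions of the form $\derz{k}=\tfrac{1}{2i}(Y^{-1})_{kl}\bigl(\dery{l}-\bar Z_{lm}\derx{m}\bigr)$. When $Z$ is diagonal, $Y^{-1}$ is diagonal, so differentiation in $Z_{ij}$ decouples across indices, and a direct computation gives
\[G\bigl(\dert{ij}\bigr)=2i\bigl(\derz{i}\otimes\derz{j}+\derz{j}\otimes\derz{i}\bigr)\quad(i\neq j),\qquad G\bigl(\dert{ii}\bigr)=2i\,\derz{i}\otimes\derz{i},\]
with the antiholomorphic cases following by complex conjugation, since $I$ is a holomorphic family and $\tilde G(V'')=\overline{G(\overline V'')}$.

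Finally, using $\Delta_B=\nabla^2_B+\nabla_{\delta B}$ and the fact that the Levi-Civita connection of the flat K\"ahler metric on $M_Z$ is trivial, the divergence terms $\nabla_{\delta \tilde G(V)}$ vanish and $\nabla^2_{\tilde G(V)}$ reduces to iterated partial derivatives in the $z$-frame. Substituting the expressions for $\tilde G(V)$ reproduces the four identities of the theorem. The main obstacle is the middle step: the frame $\derz{k}$ itself depends on $Z$, so unpacking $V[I]$ and tracking the precise $2i$ factor requires careful bookkeeping with the $z=x+Zy$ identification; once this is done, flatness reduces the final assembly to routine substitution.
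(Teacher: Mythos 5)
Your proposal follows essentially the same route as the paper: the first formula comes from setting the Ricci potential $F=0$ in Theorem~\ref{thm:1} (with the extra $\frac{1}{2\pi}$ accounted for by the curvature normalization $-2\pi i\omega$), and the explicit formulas come from computing $V[I]$ via the frame change $z=x+Zy$, solving $\tilde{G}(V)\cdot\omega=V[I]$, and using flatness of the torus to kill the divergence term so that $\Delta_{\tilde{G}(V)}=\nabla^2_{\tilde{G}(V)}$. The one genuine discrepancy is your hypothesis: you fix $Z$ \emph{diagonal}, whereas the theorem asserts the formulas for all \emph{normal} $Z$, i.e.\ $[Z,\bar{Z}]=0$, equivalently $[X,Y]=0$. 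Diagonal matrices are a proper subset of the normal symmetric ones, so as written your computation does not establish the full statement. The paper needs only normality: it uses $[X,Y^{-1}]=0$ together with $[Y^{-1},\Delta_{ij}]=0$, where $\Delta_{ij}$ denotes the elementary symmetric matrix $\partial Z/\partial Z_{ij}$, to make every matrix factor in $\partial I(Z)/\partial Z_{ij}$ commute; the contraction against $\omega=-\frac{1}{2i}\sum_{kl}w_{kl}\,dz_k\wedge d\bar{z}_l$ with $W=Y^{-1}$ then yields $2i\,\derz{i}\otimes\derz{j}+2i\,\derz{j}\otimes\derz{i}$ directly. Your argument goes through verbatim once ``diagonal'' is replaced by this commutativity, so the gap is easily repaired; the remaining ingredients (vanishing of $F$ on the flat torus, the $2\pi$ bookkeeping, obtaining $\bar{G}(\derbt{ij})$ by conjugation rather than by redoing the computation as the paper does) all match or harmlessly streamline the paper's proof.
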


This theorem is proved in the appendix. It should be noted that the
requirement on $Z$ to be normal, only is to ease the calculations, and
it will not be used anywhere else in the rest of this paper.

With this formal trivialization we use Theorem~\ref{general} and create an $I$ independent star
product on $C^\infty(M)$ which all Berezin--Toeplitz star products are
equivalent to. This is done in \cite{A2} Theorem~5 where it is shown
that the $I$ independent star product actually is the Moyal--Weyl
product
\[
f \star g = \mu \circ \exp(-\frac{i}{2}h Q)(f \otimes g),
\]
where $\mu: C^\infty(\bR^{2n}) \otimes C^\infty(\bR^{2n}) \to
C^\infty(\bR^{2n})$ given by multiplication $f\otimes g \mapsto fg$ and
\[
Q = \sum_i \derx{i} \otimes \dery{i} - \dery{i} \otimes \derx{i}.
\]
Again we see that this is exactly as in Theorem~\ref{star}.

\subsection*{Abelian Chern--Simons Theory}
\label{sec:abelian-chern-simons}

In $2+1$ dimensional Chern--Simons theory, the $2$-dimensional part of
the theory is a modular functor, which is a functor from the category
of compact smooth oriented surfaces to the category of finite
dimensional complex vector spaces, which satisfy certain
properties. In the gauge-theoretic construction of this functor one
first fixes a compact Lie group $G$ and an invariant non-degenerate
inner product on its Lie algebra. The functor then associates to a
closed oriented surface the finite dimensional vector space one obtains
by applying geometric quantization to the moduli space of flat
$G$-connections on the surface (see e.g. \cite{W1} and \cite{At}). In
the abelian case $G = U(1)$ at hand this concretely means the
following. For a closed oriented surface $\Sigma$ the moduli space of
flat $U(1)$-connections
\[
M= \Hom(\pi_1(\Sigma),U(1)) = H^1(\Sigma,\bR)/H^1(\Sigma,\Z)
\]
has a symplectic structure given by the cup product followed by
evaluation on the fundamental class of $\Sigma$. This symplectic
structure is by Poincaré duality integral and
is unimodular over the lattice $H^1(\Sigma,\Z)$. A subgroup of the mapping class
group $\Gamma$ of $\Sigma$ acts on $M$ via the induced homomorphism
\[
\rho: \Gamma \to \Aut(H^1(\Sigma,\Z),\omega) = \text{Sp}(2n,\Z).
\] Define $\Gamma' = \rho^{-1}(A')$ and 
\[
\rho'=\rho |_{\Gamma'} : \Gamma' \to A'.
\]
 The homomorphism $\rho'$ is surjective and has the Torelli subgroup
of $\Gamma$ as its kernel.

If we use the above theory we construct a Hermitian
vector bundle $\Hk$ over the space of complex structures $\T$ on
$H^1(\Sigma,\bR)$. As discussed this bundle has a flat connection, and
an action of action of $\Aut(H^1(\Sigma,\Z),\omega)$ that preserves
the Hermitian structure and the flat connection. In
this case the modular functor is defined by associating to $\Sigma$,
the vector space $Z^{(k)}(\Sigma)$ consisting of covariant constant
sections of $\Hk$ over $\T$. So through the representation $\rho$, we
get a representation $\rho_k$ of the mapping class group $\Gamma$ of
$\Sigma$ on $Z^{(k)}(\Sigma)$. In the $\SU(n)$-case in the
introduction this representation was denoted $Z^{(n,d)}_k$. 

The $2+1$ dimensional Chern--Simons theory also fits into a TQFT
setup. Suppose $Y$ is a compact oriented $3$-manifold such that
$\partial Y = (-\Sigma_1) \cup \Sigma_2$, where $\Sigma_1$ and
$\Sigma_2$ are closed oriented surfaces and $-\Sigma_1$ is $\Sigma_1$
with reversed orientation. Assume furthermore that $\gamma$ is a link
inside $Y - \partial Y$. Then the TQFT-axioms states that there should
be a linear morphism $Z^{(k)}(Y,\gamma): Z^{(k)}(\Sigma_1) \to
Z^{(k)}(\Sigma_2)$, which satisfies that gluing along boundary
components goes to the corresponding composition of linear maps.
\begin{definition}
  \label{def:1}
  The curve operator 
  \[
  Z^{(k)}(Y,\gamma): Z^{(k)}(\Sigma_1) \to Z^{(k)}(\Sigma_2),
  \]
  is defined to be
  \[
  Z^{(k)}(Y,\gamma):= T^{(k)}_{E_{I(Z)}(h_{\gamma}),I(Z)},
  \]
  where $h_{\gamma}$ is the holonomy function associated to
  $\gamma$. 
\end{definition}
To a simple closed curve $\gamma$ on $\Sigma$ the holonomy function
$h_\gamma \in C^\infty(M)$ is a pure phase function, i.e. $h_\gamma =
F_{r,s}$ where $r,s \in \Z^n$. Note that we do not label $\gamma$ with an
irreducible $U(1)$-representation $\lambda$.

Using this definition we could give the exact same proof as of Theorem~\ref{MainA3} and obtain a classical theorem from the
theory of theta functions.
\begin{theorem}
  \label{thm:2}
  Elements in the Torelli subgroup $\ker \rho'$ are
  exactly those who are in the kernels of all $\rho_k's$,
\[
\bigcap_{k=1}^\infty \ker \rho_k = \ker \rho'.
\]
\end{theorem}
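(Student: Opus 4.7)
\textbf{Proof plan for Theorem~\ref{thm:2}.} The plan is to prove the two inclusions separately and to mimic, in the abelian setting, the asymptotic-faithfulness argument used for Theorem~\ref{MainA3}. The containment $\ker \rho' \subset \bigcap_{k} \ker \rho_k$ is immediate: by construction the representation $\rho_k$ factors through $\rho$, since the $A'$-action on $\Hk$ (and hence on the subspace $Z^{(k)}(\Sigma)$ of covariant constant sections) is induced from the action of $\mathrm{Sp}(2n,\Z)$ on theta functions via $\rho$. Consequently, if $\rho(\phi) = 1$ then $\rho_k(\phi) = \Id$ for every $k$.

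For the reverse inclusion, fix $\phi \in \bigcap_{k} \ker \rho_k$ and an arbitrary $f \in C^\infty(M)$. Viewing $\phi$ as a symplectomorphism of $M$ covered by a bundle automorphism of $\cL$, we have the intertwining identity $\phi^{*} \circ T^{(k)}_{f,\sigma} = T^{(k)}_{f\circ \phi,\phi(\sigma)} \circ \phi^{*}$ exactly as in the proof of Theorem~\ref{MainA3}. Writing $\tilde\phi_k = P_{\phi(\sigma),\sigma}\circ \phi^{*}$, the assumption $\phi\in \bigcap_k \ker\rho_k$ gives $\tilde\phi_k \in \bC\cdot \Id$, so conjugation by $\tilde\phi_k$ is trivial. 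The same commutative diagram as in the $\SU(n)$ proof then yields
\[
T^{(k)}_{f,\sigma} \;=\; P_{\phi(\sigma),\sigma}\, T^{(k)}_{f\circ\phi,\phi(\sigma)}\, P_{\sigma,\phi(\sigma)},
\]
i.e. $T^{(k)}_{f,\sigma}$ equals the parallel transport of $T^{(k)}_{f\circ\phi,\phi(\sigma)}$ in $\End(\Hk)$ with respect to $\Nablae$.

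Next I would invoke the asymptotic flatness of Toeplitz operators, Equation~\eqref{Asympflat}, which carries over to the abelian setting: the general formal Hitchin connection machinery of Theorem~\ref{MainFGHCI2} applies, and in fact one has the stronger statement here that $T^{(k)}_{E_{I(Z)}(f)(1/k)}$ is exactly covariant constant with respect to $\Nablae$, which forces $\|\Nablae_V T^{(k)}_f\| = O(k^{-1})$ and hence the parallel-transport estimate. Combining the displayed identity with \eqref{Asympflat} gives
\[
\|T^{(k)}_{f,\sigma} - T^{(k)}_{f\circ\phi,\sigma}\| \;=\; \|T^{(k)}_{f-f\circ\phi,\sigma}\| \;=\; O(k^{-1}),
\]
and Theorem~\ref{BMS1} then forces $\sup_{x\in M}|f(x)-(f\circ\phi)(x)|=0$, so $f = f\circ\phi$ on $M$. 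Since $f$ was arbitrary, $\phi$ acts trivially on $M = V/\Lambda$. Finally, the linear $\mathrm{Sp}(2n,\Z)$-action on the torus $V/\Lambda$ is faithful (an element of $\mathrm{Sp}(2n,\Z)$ fixing every point of $V/\Lambda$ must, by continuity and the discreteness of $\Lambda$, fix $V$ pointwise), so $\rho(\phi) = 1$ and thus $\phi \in \ker\rho = \ker\rho'$.

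The main obstacle is making sure the asymptotic-flatness ingredient \eqref{Asympflat}, originally stated for the $\SU(n)$-moduli space, is available in the abelian setting where $H^1(M,\bR) \neq 0$. This is not genuinely problematic, because the explicit formal trivialization $E_{I(Z)} = \exp(-\tfrac{h}{4}\Delta_{I(Z)})$ discussed in Section~\ref{sec:abelian} yields covariant constant Toeplitz operators $T^{(k)}_{E_{I(Z)}(f)(1/k)}$ on the nose, and since $E_{I(Z)}(f)(1/k) = f + O(1/k)$ in $C^\infty(M)$, the required $O(k^{-1})$-bound on $\|\Nablae_V T^{(k)}_f\|$ follows immediately. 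The remaining minor subtlety is the possible projective nature of $\rho_k$ on all of $\Gamma$, but this is harmless: scalar factors commute with everything and drop out of the conjugation argument, exactly as in the proof of Theorem~\ref{MainA3}.
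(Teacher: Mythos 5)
Your proposal is correct and follows essentially the same route as the paper, which simply says to repeat the asymptotic-faithfulness argument of Theorem~\ref{MainA3} in the abelian setting; you have additionally filled in the details the paper leaves implicit (the exact covariant constancy of $T^{(k)}_{E_{I(Z)}(f)(1/k)}$ supplying the flatness estimate, and the faithfulness of the $\mathrm{Sp}(2n,\Z)$-action on $V/\Lambda$ to pass from triviality on $M$ to $\rho(\phi)=1$).
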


To this end we want to give a proof of Theorem~$1$ from \cite{A8} in
the case of abelian moduli spaces. We do this by studying the Hilbert--Schmidt norm of the curve
operators.
\begin{definition}
  \label{def:2}
  The Hilbert--Schmidt inner product of two operator $A,B$ is
  \[
  \inner{A}{B} = \tr(AB^*).
\]
\end{definition}
If we introduce the notation
\begin{align*}
\eta_k(r,s) &= \re(e^{-\frac{\pi i}{k}r\cdot
    \bar{Z}r}e^{-2\pi i s \cdot \alpha} e^{-\pi^2(s-\bar{Z}r)\cdot
    (2\pi k Y)^{-1}(s-\bar{Z}r)})\\  &= e^{-\frac{\pi}{2k}((s-Xr)\cdot
    Y^{-1}(s-Xr) + r \cdot Y r)}
\end{align*} and recall the matrix coefficients of the Toeplitz operators
$T^{(k)}_{F_{r,s}}$ in terms of the basis of theta functions then 
\[
(F_{r,s}\theta^{(k)}_{\alpha},\theta^{(k)}_{\beta})_Y =
\delta_{\alpha-\beta,-[\frac{r}{k}]} e^{-2\pi i s \cdot
  \alpha}e^{-\frac{\pi i}{k} r \cdot s} \eta_k(r,s).
\]
Note that $f(r,s,Z)(k) =
\eta_k(r,s)^{-1}$. Here we suppress the $Z$ dependence in
$\eta_k(r,s)$ since we from now only consider fixed K\"{a}hler
structure.

\begin{lemma}
  \label{lem:1}
  \[
  \tr(T^{(k)}_{F_{r,s}}(T^{(k)}_{F_{t,u}})^*) =
  \begin{cases}
    k^n \eta_k(r,s)
    \eta_k(t,u) \epsilon(r,s,t,u) & (r,s) \equiv (t,u) \text{ mod } k \\
    0  & else
  \end{cases}
  \]
where $\epsilon(r,s,t,u) \in \{\pm 1\}$ and is $1$ for $(r,s) = (t,u)$.
\end{lemma}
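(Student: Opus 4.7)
The plan is to compute the Hilbert--Schmidt inner product directly in the orthonormal basis of theta functions, using the explicit matrix coefficients from Equation~\eqref{eq_dq_gq_ab:1}. Recall that in an orthonormal basis
\[
\tr(A B^*) = \sum_{\alpha,\beta} (A)_{\beta\alpha}\,\overline{(B)_{\beta\alpha}},
\]
so I would plug in $A = T^{(k)}_{F_{r,s}}$ and $B = T^{(k)}_{F_{t,u}}$, whose matrix coefficients with respect to $\{\theta^{(k)}_\alpha\}_{\alpha \in \frac1k \Z^n/\Z^n}$ are explicitly known. The two Kronecker deltas $\delta_{\alpha-\beta,-[r/k]}$ and $\delta_{\alpha-\beta,-[t/k]}$ must agree for the product to be nonzero, which forces $[r/k] = [t/k]$ in $\frac1k\Z^n/\Z^n$, i.e.\ $r \equiv t \bmod k$. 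When this holds, $\beta$ is uniquely determined by $\alpha$, and the double sum collapses to a sum over $\alpha$.

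Next I would collect the resulting $\alpha$-dependent factor, which is $e^{2\pi i (u-s)\cdot \alpha}$. By the standard orthogonality of characters on the finite group $\frac1k \Z^n / \Z^n$ (which has $k^n$ elements),
\[
\sum_{\alpha \in \frac1k\Z^n/\Z^n} e^{2\pi i (u-s)\cdot \alpha} =
\begin{cases} k^n, & s \equiv u \bmod k, \\ 0, & \text{otherwise,} \end{cases}
\]
which produces the second congruence condition and the factor $k^n$. The $\eta_k(r,s)\eta_k(t,u)$ factors just pull out since they do not depend on $\alpha$ or $\beta$.

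The remaining phase is $\epsilon(r,s,t,u) := e^{-\frac{\pi i}{k}(r\cdot s - t\cdot u)}$; I would check this is $\pm 1$ and equals $1$ when $(r,s) = (t,u)$. Writing $r = t + k r'$ and $s = u + k s'$ with $r',s' \in \Z^n$ (allowed by the congruences $(r,s)\equiv(t,u) \bmod k$),
\[
r\cdot s - t \cdot u = k(t \cdot s' + r'\cdot u) + k^2 r' \cdot s',
\]
so $\tfrac{1}{k}(r\cdot s - t\cdot u) \in \Z$ and hence $\epsilon(r,s,t,u) = (-1)^{t\cdot s' + r'\cdot u + k\, r' \cdot s'} \in \{\pm 1\}$. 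When $(r,s)=(t,u)$ we may take $r' = s' = 0$, giving $\epsilon = 1$.

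There is no real obstacle here: the only subtlety is keeping track of the two independent mod-$k$ conditions, one arising from the delta in the matrix coefficient (giving $r \equiv t$) and the other from character orthogonality on $\frac1k\Z^n/\Z^n$ (giving $s \equiv u$), and then verifying the integrality of $(r\cdot s - t\cdot u)/k$ so that the residual phase is a sign. Everything else is bookkeeping of the exponentials coming from Equation~\eqref{eq_dq_gq_ab:1}.
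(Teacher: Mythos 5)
Your proposal is correct and follows essentially the same route as the paper's proof: both expand the trace in the orthonormal theta basis using the matrix coefficients from Equation~\eqref{eq_dq_gq_ab:1}, extract $r\equiv t \bmod k$ from the Kronecker deltas and $s\equiv u \bmod k$ together with the factor $k^n$ from the character sum over $\frac1k\Z^n/\Z^n$, and identify the leftover phase as a sign. Your version is in fact slightly cleaner in that it gives the closed form $\epsilon(r,s,t,u)=e^{-\frac{\pi i}{k}(r\cdot s - t\cdot u)}$ and verifies its integrality explicitly, where the paper only argues the sign ambiguity indirectly.
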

\begin{proof}
  We start by calculating the matrix coefficients of the product of
  the Toeplitz operators
  \begin{align*}
    (T^{(k)}_{F_{r,s}}(T^{(k)}_{F_{t,u}})^*)_{\beta\alpha} &= \sum_\phi
    (T^{(k)}_{F_{r,s}})_{\beta\phi}
    \overline{(T^{(k)}_{F_{t,u}})_{\alpha\phi}} \\
&= \delta_{\alpha-\beta,-[\frac{r-t}{k}]}e^{-2\pi i \alpha \cdot
  (s-u)} e^{-\frac{\pi i}{k}(r\cdot s - 2 s\cdot t + t\cdot u)} \eta_k(r,s)\eta_k(t,u).
  \end{align*}
Now when taking the trace $\alpha = \beta$ and to get something
non-zero we must have $r \equiv t$ mod $k$. In that case
\[
\tr(T^{(k)}_{F_{r,s}}(T^{(k)}_{F_{t,u}})^*) =
\epsilon(r,s,t,u)\eta_k(r,s)\eta_k(t,u)e^{\frac{\pi i}{k}r \cdot (s-u)} \sum_\alpha
e^{-2\pi i \alpha(s-u)},
\] the $\epsilon$ is obtained since $t = r + kv$ only determines the
equality
\[ e^{-\frac{\pi i}{k}(r\cdot s - 2 s\cdot t + t\cdot u)} = \pm
e^{\frac{\pi i}{k}(r \cdot(s-u))} \] up to a sign.
Now if $s \not\equiv u$ the last term is zero since it is $n$ sums of
all $k$'th roots of unity, and hence $0$. If $s \equiv u$ each term in
the sum is $1$, and we get the desired result.
\end{proof}

Using the above lemma and the following limits
\begin{equation}\label{eq:limits}
\lim_{k \to \infty} \eta_k(r,s)  = 1 \qquad \text{and} \qquad \lim_{k
  \to \infty} \eta_k(r+kt,s+ku) = 0, 
\end{equation} for all $r,s \in \Z^n$, we can prove the following

\begin{theorem}
  \label{thm:3}
  For any two smooth functions $f,g \in C^\infty(M)$ and any $Z \in
  \bH$ one has that
  \[
  \inner{f}{g} = \lim_{k\to\infty} k^{-n}
  \inner{T^{(k)}_{f,I(Z)}}{T^{(k)}_{g,I(Z)}},
\] where the real dimension of $M$ is $2n$.
\end{theorem}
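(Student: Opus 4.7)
The plan is to expand both $f$ and $g$ in a Fourier series with respect to the orthonormal basis $\{F_{r,s}\}_{(r,s)\in \Z^{2n}}$ of $L^2(M)$, then apply Lemma~\ref{lem:1} and the limits \eqref{eq:limits} termwise, controlling the error by the rapid decay of the Fourier coefficients.

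First, I would write
\[
f = \sum_{(r,s)\in \Z^{2n}} a_{r,s} F_{r,s}, \qquad g = \sum_{(r,s)\in \Z^{2n}} b_{r,s} F_{r,s},
\]
with $\{a_{r,s}\}, \{b_{r,s}\}$ of Schwartz (rapid) decay since $f,g$ are smooth, and observe that $\inner{f}{g} = \sum_{(r,s)} a_{r,s}\overline{b_{r,s}}$ since the $F_{r,s}$ form an orthonormal basis on the unit-volume torus $M$. By bilinearity of $f \mapsto T^{(k)}_f$, sesquilinearity of the Hilbert--Schmidt pairing, and Lemma~\ref{lem:1}, only pairs congruent mod $k$ contribute, so
\[
k^{-n}\inner{T^{(k)}_{f,I(Z)}}{T^{(k)}_{g,I(Z)}} = \sum_{(r,s)} \sum_{(v,w)\in \Z^{2n}} a_{r,s}\overline{b_{r+kv,s+kw}}\, \epsilon \, \eta_k(r,s)\eta_k(r+kv,s+kw).
\]
I would split this double sum into the diagonal piece $(v,w)=0$ and the off-diagonal piece $(v,w)\neq 0$.

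The diagonal piece equals $\sum_{(r,s)} a_{r,s}\overline{b_{r,s}} \eta_k(r,s)^2$. By the first limit in \eqref{eq:limits}, $\eta_k(r,s)^2 \to 1$ pointwise; together with the bound $|\eta_k(r,s)| \leq 1$ and absolute summability of $\sum|a_{r,s} b_{r,s}|$ (from Schwartz decay), dominated convergence produces $\inner{f}{g}$ in the limit.

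The main obstacle is showing the off-diagonal contribution vanishes. Expanding the explicit formula for $\eta_k$ gives
\[
\eta_k(r+kv,s+kw) = \exp\!\left(-\tfrac{\pi k}{2} Q(v,w) + O_{r,s}(1)\right),
\]
where $Q(v,w) = (w-Xv)\cdot Y^{-1}(w-Xv) + v \cdot Y v$ is a positive definite quadratic form in $(v,w)$ (using that $Y, Y^{-1}$ are positive definite). Thus for each fixed $(v,w) \neq 0$, $\eta_k(r+kv,s+kw)$ decays exponentially in $k$, uniformly on bounded sets of $(r,s)$. The delicate point is to combine this exponential-in-$k$ decay with the Schwartz decay of $\{a_{r,s}\},\{b_{r,s}\}$ to obtain an overall bound of the form $O(e^{-ck})$, $c>0$, on the full off-diagonal sum. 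I would carry this out by splitting the $(r,s)$-range into $\|(r,s)\| \leq \sqrt{k}$, where the Gaussian $\eta_k$ dominates uniformly, and $\|(r,s)\|>\sqrt{k}$, where the Schwartz tails of $a_{r,s}$ and $b_{r+kv,s+kw}$ already give more than enough decay. Summing the resulting exponential bound over $(v,w)\in \Z^{2n}\setminus\{0\}$ against Gaussian weights in $Q(v,w)$ keeps the total finite and tending to zero, completing the proof.
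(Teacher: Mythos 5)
Your proposal is correct and follows essentially the same route as the paper's proof: Fourier-expand $f$ and $g$, apply Lemma~\ref{lem:1} to reduce to pairs congruent mod $k$, and split into the diagonal piece (which tends to $\inner{f}{g}$ via $\eta_k(r,s)\to 1$) and an off-diagonal piece that vanishes in the limit. You actually supply more detail than the paper on why the off-diagonal sum vanishes --- the paper simply invokes uniform convergence and the limits in Equation~\eqref{eq:limits} together with $\mu_{r+kt,s+ku}\to 0$, whereas your split of the $(r,s)$-range at $\sqrt{k}$ makes the interchange of limit and summation explicit.
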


\begin{proof}
From Lemma~\ref{lem:1} we get in particular
\[
\norm{T^{(k)}_{F_{r,s}}}_k = k^{-n/2}
\sqrt{\tr(T^{(k)}_{F_{r,s}}(T^{(k)}_{F_{r,s}})^*)} = \eta_k(r,s),
\]
and
\[
\norm{T^{(k)}_{E_I(F_{r,s})}}_k = 1,
\]
where $\norm{\cdot}_k = k^{-n/2}\sqrt{\inner{\cdot}{\cdot}}$ is the
$k$-scaled Hilbert--Schmidt norm.

Let $f,g \in C^\infty(M)$ be an arbitrary elements and expand them in 
Fourier series
\[
f = \sum_{(r,s) \in \Z^{2n}} \lambda_{r,s} F_{r,s} \qquad
\text{and} \qquad g = \sum_{(t,u) \in \Z^{2n}} \mu_{t,u} F_{t,u}.
\]
$\eta_k(r,s)$ and $\eta_k(t,u)$ decays very fast for increasing $r,s
\in \Z^n$ and we have
\begin{align*}
k^{-n} \tr(T^{(k)}_f& (T^{(k)}_g)^*) = k^{-n} \sum_{(r,s), (t,u) \in
  \Z^{2n}} \lambda_{r,s}\bar{\mu}_{t,u}
\tr(T^{(k)}_{F_{r,s}}(T^{(k)}_{F_{t,u}})^*) \\
=& \sum_{(r,s) \in
  \Z^{2n}}\lambda_{r,s}\bar{\mu}_{t,u}\eta_k(r,s)^2 \\
&+ \sum_{\substack{(r,s),(t,u) \in \Z^{2n}\\ (t,u) \neq (0,0)}}
\lambda_{r,s}\bar{\mu}_{r+kt,s+ku} \eta_k(r,s)\eta_k(r+kt,s+ku)\epsilon(r,s,t,u).
\end{align*}
This sum converges uniformly so if we take the large $k$ limit we can
interchange limit and summation. Now by Equation~\ref{eq:limits} and
since
\[
\lim_{k \to \infty}\mu_{r+kt,s+ku} = 0
\] by pointwise convergence of the Fourier series we finally get
\[
\lim_{k \to \infty} k^{-n} \tr(T^{(k)}_f (T^{(k)}_g)^*) = \sum_{(r,s)
  \in \Z^{2n}} \lambda_{r,s} \bar{\mu}_{r,s}.
\]
Now since the pure phase functions are orthogonal we get to desired result.
\end{proof}

It should be remarked that Theorem~\ref{thm:3} just is a particular
case of a theorem of the same wording, with $M$ being a compact
K\"{a}hler manifold, see e.g. \cite{A8}. Theorem~\ref{thm:3} was also
proved in \cite{BHSS} but only for a small class of principal
polarized abelian varieties.

As a corollary to the proof of Theorem~\ref{thm:3} we have
\begin{corollary}
  \label{cl:1}
\[
  \inner{f}{g} = \lim_{k \to \infty} k^{-n} \inner{T^{(k)}_{E_I(f)}}{T^{(k)}_{E_I(g)}}.
\]
\end{corollary}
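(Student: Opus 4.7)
The plan is to adapt the proof of Theorem~\ref{thm:3} directly, exploiting that the formal trivialization $E_I$ acts diagonally on the Fourier basis of pure phases. First I would record the key identity: by Proposition~\ref{prop:1} evaluated at $h=1/k$,
\[
E_{I(Z)}(F_{r,s}) = f(r,s,Z)(k)\,F_{r,s} = \eta_k(r,s)^{-1}\,F_{r,s},
\]
and hence by linearity of the Toeplitz assignment,
\[
T^{(k)}_{E_I(F_{r,s})} = \eta_k(r,s)^{-1}\,T^{(k)}_{F_{r,s}}.
\]

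Inserting this into Lemma~\ref{lem:1} immediately yields
\[
\tr\bigl(T^{(k)}_{E_I(F_{r,s})}\,(T^{(k)}_{E_I(F_{t,u})})^*\bigr) =
\begin{cases}
k^n\,\epsilon(r,s,t,u) & \text{if } (r,s)\equiv (t,u) \text{ mod } k,\\
0 & \text{otherwise.}
\end{cases}
\]
The two $\eta_k$-factors from Lemma~\ref{lem:1} exactly cancel the normalization produced by $E_I$, which is precisely why Corollary~\ref{cl:1} is cleaner than Theorem~\ref{thm:3}: the formal trivialization is already the correct ``flat'' rescaling of pure phases, consistent with $\|T^{(k)}_{E_I(F_{r,s})}\|_k = 1$ observed in the proof of Theorem~\ref{thm:3}.

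For the main step, I would expand $f$ and $g$ in Fourier series on $M$,
\[
f = \sum_{(r,s)\in\Z^{2n}} \lambda_{r,s}\,F_{r,s}, \qquad g = \sum_{(t,u)\in\Z^{2n}} \mu_{t,u}\,F_{t,u},
\]
where smoothness of $f,g$ guarantees Schwartz decay of the coefficients. Substituting into $k^{-n}\inner{T^{(k)}_{E_I(f)}}{T^{(k)}_{E_I(g)}}$ and applying the trace formula above, the expression reduces to
\[
\sum_{(r,s)\in\Z^{2n}}\lambda_{r,s}\,\bar\mu_{r,s}\;+\;\sum_{\substack{(r,s)\in\Z^{2n}\\ (t,u)\in\Z^{2n}\setminus\{(0,0)\}}} \lambda_{r,s}\,\bar\mu_{r+kt,\,s+ku}\,\epsilon(r,s,t,u).
\]
The first sum equals $\inner{f}{g}$ by Parseval. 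As $k\to\infty$ the second sum vanishes: for each fixed $(r,s,t,u)$ with $(t,u)\neq 0$ one has $\mu_{r+kt,s+ku}\to 0$ by the decay of Fourier coefficients, and the Schwartz decay produces a summable majorant independent of $k$, allowing dominated convergence exactly as at the end of the proof of Theorem~\ref{thm:3}.

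The only mild point to verify is the interchange of limit and summation, which is handled by the same Schwartz-decay argument that appears in Theorem~\ref{thm:3}; there is no substantively new obstacle. In this sense Corollary~\ref{cl:1} is an immediate consequence of Theorem~\ref{thm:3} together with Proposition~\ref{prop:1}, obtained by repeating the computation of Theorem~\ref{thm:3} with the $\eta_k$-prefactors absorbed into the formal trivialization $E_I$.
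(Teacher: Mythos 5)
Your proof is correct and takes essentially the same route as the paper, which presents Corollary~\ref{cl:1} precisely as a byproduct of the proof of Theorem~\ref{thm:3}: since $E_I(F_{r,s}) = f(r,s,Z)(k)F_{r,s} = \eta_k(r,s)^{-1}F_{r,s}$, the $\eta_k$-prefactors in Lemma~\ref{lem:1} cancel and the remaining Fourier-series and limit argument goes through verbatim. No further comment is needed.
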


We can interpret Corollary~\ref{cl:1} in terms of TQFT curve
operators. Since we defined a curve operator $Z^{(k)}(\Sigma,\gamma)$
to be $T^{(k)}_{E_I(h_\gamma)}$ where $h_\gamma$ is the corresponding
holonomy function of $\gamma$ we immediately get
\[
\inner{h_{\gamma_1}}{h_{\gamma_2}} = \lim_{k \to
  \infty}k^{-n}\inner{Z^{(k)}(\Sigma,\gamma_1)}{Z^{(k)}(\Sigma,\gamma_2)},
\] which was proved in \cite{A8} and \cite{MN}.

Another interpretation is that gluing two cylinders
$(\Sigma\times[0,1],\gamma_1)$ and $(\Sigma\times[0,1],\gamma_2)$
along $\Sigma \times \{0\}$ and $-\Sigma \times \{0\}$ and again at the
top $\Sigma \times \{1\}$ along $-\Sigma \times \{1\}$, we obtain the
closed three manifold $\Sigma \times S^1$ with the link $\gamma_1 \cup
\gamma_2^*$ embedded. Here $\gamma_2^*$ means $\gamma_2$ with reversed
orientation. The TQFT gluing axioms now say that
\[
Z^{(k)}(\Sigma\times S^1,\gamma_1 \cup \gamma_2^*) =
\tr(Z^{(k)}(\Sigma\times[0,1],\gamma_1) Z^{(k)}(\Sigma\times[0,1],\gamma_2)^*).
\]
If we now define $Z^{(k)}(\Sigma \times S^1, \gamma_1 \cup
\gamma_2^*)$ to be exactly this, we see that if we take $\gamma_1$ and
$\gamma_2$ to be the empty links we have
\[
Z^{(k)}(\Sigma \times S^1) = \tr(T^{(k)}_{1}(T^{(k)}_{1})^*) = k^n = \dim(Z^{(k)}(\Sigma)),
\]
as it should be according to the axioms.

\section{Appendix}

In this appendix we provide the calculations needed to prove the
explicit formuli for the formal Hitchin connection given in Theorem~\ref{thm:4}.

We first observe that the theorem will follow from
Equation~\ref{eq_hitchin:2} if we can show that for $i \neq j$
\[
\tilde{G}(\dert{ij}) = 2i \derz{i} \otimes \derz{j} + 2i \derz{j}
\otimes \derz{i} \quad \text{and} \quad \tilde{G}(\derbt{ij}) = 2i \derbz{i} \otimes \derbz{j} + 2i \derbz{j}
\otimes \derbz{i}
\]
and for $i = j$
\[
\tilde{G}(\dert{ii}) = 2i \derz{i} \otimes \derz{i} \quad \text{and}
\quad \tilde{G}(\derbt{ii}) = 2i \derbz{i} \otimes \derbz{i}.
\]
Since the family of K\"{a}hler structures parametrized by $\bH$ is
holomorphic we just have to solve the equations
\[
G(\dert{ij}) \cdot \omega = \frac{\partial I(Z)}{\partial Z_{ij}}
\quad \text{and} \quad \bar{G}(\derbt{ij}) \cdot \omega =
\frac{\partial I(Z)}{\partial \bar{Z}_{ij}}.
\]

\begin{lemma}
  \label{lem:2}
  The K\"{a}hler structure associated to a $Z = X + i Y \in \bH$ is
\[  I(Z) =
\begin{pmatrix}
  -Y^{-1}X & -(Y+XY^{-1}X) \\
  Y^{-1} & XY^{-1}
\end{pmatrix},
\]where we have written it as tensor in the frame $\derx{i},\dery{j}$
of the tangent bundle $TM$. 
\end{lemma}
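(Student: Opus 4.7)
The proof is an elementary but careful linear-algebra computation. The plan is to characterise $I(Z)$ as the unique real endomorphism of $TM$ whose $(+i)$-eigenspace is spanned by the holomorphic tangent vectors $\partial/\partial z_i$ coming from the complex coordinates $z_i = x_i + \sum_k Z_{ik}y_k$, and then verify that the matrix in the statement has this property.

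First I would express $\partial/\partial z_i$ in the real frame $\{\partial/\partial x_j,\partial/\partial y_j\}$. From $z = x+Zy$ and $\bar z = x + \bar Z y$ one inverts to get $y = \frac{1}{2i}Y^{-1}(z-\bar z)$ and $x = z - Zy$, and the chain rule then yields
\[
\frac{\partial}{\partial z_i} \;=\; \sum_j\Bigl(\delta_{ji} + \tfrac{i}{2}(ZY^{-1})_{ji}\Bigr)\frac{\partial}{\partial x_j} \;-\; \tfrac{i}{2}\sum_j (Y^{-1})_{ji}\frac{\partial}{\partial y_j},
\]
with the complex conjugate formula giving $\partial/\partial\bar z_i$. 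Using the symmetry $X = X^{T}$, $Y = Y^{T}$ (so $(XY^{-1})^{T} = Y^{-1}X$), one rewrites this as a column vector in the $2n$-dimensional real frame.

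The core step is then to apply the claimed block matrix
\[
I(Z) \;=\; \begin{pmatrix}-Y^{-1}X & -(Y + XY^{-1}X) \\[2pt] Y^{-1} & XY^{-1}\end{pmatrix}
\]
to this column vector and check, block by block, that the result equals $i\,\partial/\partial z_i$. The conjugate eigenvector equation $I(Z)\,\partial/\partial\bar z_i = -i\,\partial/\partial\bar z_i$ follows because the matrix is real. This alone determines $I(Z)$ uniquely, so we are done once the identity is verified.

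As a consistency check I would also verify directly that $I(Z)^2 = -\mathrm{Id}$, which reduces to the two block identities $(Y^{-1}X)^{2} - (Y + XY^{-1}X)Y^{-1} = -I_n$ (and the analogous $(2,2)$-entry), together with the off-diagonal block cancellations; and that $I(Z)$ is $\omega$-compatible with $g(\cdot,\cdot) = \omega(\cdot, I(Z)\cdot)$ positive definite, which reduces to positive definiteness of $Y$ (since this is what $Y \in \bH$ means). The main obstacle is simply the bookkeeping: $X$ and $Y$ need not commute, so one must be scrupulous about the order of matrix products and about how tangent vectors in the $(x,y)$-frame are encoded as columns before multiplying by the $2n\times 2n$ matrix. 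Once these conventions are pinned down, the verification is mechanical.
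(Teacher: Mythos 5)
Your proposal is correct and is essentially the paper's own proof: the paper likewise expresses the complex frame in the real one, obtaining $\derz{} = \frac{i}{2}Y^{-1}\bar{Z}\,\derx{} + \frac{1}{2i}Y^{-1}\dery{}$ and its conjugate, and then characterizes $I(Z)$ by the eigenvector equations $I(Z)(\derz{i}) = i\,\derz{i}$ and $I(Z)(\derbz{i}) = -i\,\derbz{i}$. Your added consistency checks ($I(Z)^2 = -\mathrm{Id}$, compatibility with $\omega$) and your explicit attention to the ordering of $X$ and $Y^{-1}$ when they do not commute go slightly beyond the paper's two-line verification but do not change the argument.
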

\begin{proof}
  This follows from the fact that the complex frame of $TM$ are
  eigenvectors for $I(Z)$, that is 
\[
I(Z)(\derz{i}) = i \derz{i} \quad \text{and} \quad I(Z)(\derbz{i}) =
-i \derbz{i},
\]
and that
 \begin{align*}
    \derz{\,} &= \frac{i}{2} Y^{-1} \bar{Z} \derx{} +
    \frac{1}{2i}Y^{-1} \dery{} =
    \frac{i}{2}(Y^{-1}X-i)\derx{} + \frac{1}{2i}Y^{-1}\dery{} \\
    \derbz{} &= \frac{1}{2i} Y^{-1}Z \derx{} +
    \frac{i}{2}Y^{-1}\dery{} = \frac{1}{2i}(Y^{-1}X+i)\derx{}
    +\frac{i}{2}Y^{-1}\dery{}.
  \end{align*}
\end{proof}
In the above we used the vector notation $\derz{}$ meaning an $n$-tuple
of vectors $\derz{i}$. This convention eases the following
calculations and will be used in the following.

\begin{proof}[Proof of Theorem~\ref{thm:4}]
To this end we also need to recall the following derivation property
for matrices. If $A = (a_{ij})$ is a symmetric invertible $n\times
n$-matrix then
\[
\frac{\partial A^{-1}}{\partial a_{ij}} = - A^{-1} \frac{\partial
  A}{\partial a_{ij}} A^{-1} = -A^{-1} \Delta_{ij} A^{-1},
\]
where $\Delta_{ij}$ is an $n\times n$-matrix with all entries $0$
except the $ij$'th and $ji$'th which is $1$, if $i\neq j$ and
$\Delta_{ii}$ is an $n\times n$-matrix with all entries $0$ except the
$ii$'th diagonal entry which is $1$. This follows easily from $A^{-1}A =
Id$. Using this rule and that $Y^{-1} = 2i (Z-\bar{Z})^{-1}$ we get
\[
\frac{\partial Y^{-1}}{\partial Z_{ij}} = - \frac{1}{2i} Y^{-1}\Delta_{ij}Y^{-1}.
\]
Derivation of the above equations with
respect to $Z_{ij}$ becomes rather messy if we do not also
require $Z$ to be normal, that is since $Z$ is symmetric $[Z,\bar{Z}]=
0$, which is equivalent to $[X,Y] = 0$, or $[X,Y^{-1}]=0$. A
consequence of this is, that everything will commute even
$[Y^{-1},\Delta_{ij}] = 0$ since the imaginary part of derivation of $Z \bar{Z} = \bar{Z} Z$
with respect to $Z_{ij}$ give $Y\Delta_{ij} = \Delta_{ij} Y$, and
hence $[Y^{-1},\Delta_{ij}] = 0$. Written as a tensor
\[
\frac{\partial I(Z)}{\partial Z_{ij}} =
\frac{1}{2i}Y^{-1}\Delta_{ij}Y^{-1}
\begin{pmatrix}
  \bar{Z} & \bar{Z}^2 \\ -1 & -\bar{Z}
\end{pmatrix}.
\]
The symplectic form $\omega
= -\frac{1}{2i} \sum_{ij = 1}^n w_{ij} dz_i \wedge~d\bar{z}_j$ where
$Y^{-1} = W = (w_{ij})$, should be contracted with $G(\dert{ij})$ we want to know its appearance
in the $Z$-dependent $\derz{}$, $\derbz{}$ frame. It is clear from above that 
\[
\frac{\partial I(Z)}{\partial Z_{ij}}(\derz{}) =
\frac{i}{2}Y^{-1}\bar{Z} \frac{\partial I(Z)}{\partial Z_{ij}}(\derx{})
+ \frac{1}{2i}Y^{-1}\frac{\partial I(Z)}{\partial Z_{ij}}(\dery{}) = 0,
\]
and an easy calculation shows that
\[
\frac{\partial I(Z)}{\partial Z_{ij}} (\derbz{}) = -Y^{-1}\Delta_{ij}\derz{}.
\]
In other words
\[
\frac{\partial I(Z)}{\partial Z_{ij}} = 
\begin{cases}
- \sum_{k=1}^{n}( w_{ki}\frac{\partial}{\partial z_j}\otimes
d\bar{z}_k  + w_{kj} \derz{i}\otimes
d\bar{z}_k)
& \text{for } i\neq j \\
- \sum_{k=1}^n w_{ki} \derz{i} \otimes d\bar{z}_k & \text{for } i = j
\end{cases}
\]
Remark that since $I(Z)^2 = -Id$, $\frac{\partial I(Z)}{\partial
  Z_{ij}}$ and $I(Z)$ anti-commute. This is clearly reflected in
the above expressions for $\frac{\partial I(Z)}{\partial Z_{ij}}$. Now since $G(\dert{ij})$ is defined by
\[
-G (\dert{ij}) \cdot \frac{1}{2i} \sum_{kl=1}^n w_{kl}dz_k\wedge d\bar{z}_l
= \frac{\partial I(Z)}{\partial Z_{ij}}
\]
it is
\[
G(\dert{ij}) =
\begin{cases} 2i \derz{i} \otimes
\derz{j} +2i \derz{j} \otimes \derz{i} & \text{for } i \neq j \\
2i \derz{i} \otimes \derz{i} & \text{for } i = j.
\end{cases}
\]
With $G(\dert{ij})$ being expressed in complex coordinates, we should
mentioned that the family of K\"{a}hler structures parametrized by
$\bH$ in the way described above, actually is rigid,
i.e. $\bar{\partial}_Z (G(V)_Z) = 0$ for all vector field
$V$ on $\bH$. This is clear since $G(\dert{ij})$ is zero in
$\bar{z}_i$ directions and $G(\derbt{ij}) = 0$.

We could do exactly the same thing with $\derbt{ij}$ and obtain
\[
\frac{\partial I(Z)}{\partial \bar{Z}_{ij}} =
-\frac{1}{2i}Y^{-1}\Delta_{ij}Y^{-1}
\begin{pmatrix}
  Z & Z^2 \\ -1 & -Z
\end{pmatrix}.
\]
Again it is clear that
\[
\frac{\partial I(Z)}{\partial \bar{Z}_{ij}}(\derz{}) = 0 \quad
\text{and} \quad \frac{\partial I(Z)}{\partial \bar{Z}_{ij}} =
-Y^{-1}\Delta_{ij} \derbz{}.
\]
In a similar way as above we obtain
\[
\bar{G}(\derbt{ij}) =
\begin{cases} 2i \derbz{i} \otimes
\derbz{j} +2i \derbz{j} \otimes \derbz{i} & \text{for } i \neq j \\
2i \derbz{i} \otimes \derbz{i} & \text{for } i = j. 
\end{cases}\qedhere
\]
\end{proof}

\nocite{*} \bibliographystyle{OUPnamed} \bibliography{biblist}

\end{document}